\newcommand{\C}{\mathbb{C}}
\newcommand{\N}{\mathbb{N}}
\newcommand{\R}{\mathbb{R}}
\newcommand{\mm}{{\mbox{\boldmath$m$}}}
\newcommand{\nn}{{\mbox{\boldmath$n$}}}
\newcommand{\snn}{{\mbox{\scriptsize\boldmath$n$}}}
\newcommand{\ggamma}{{\mbox{\boldmath$\gamma$}}}
\newcommand{\ppi}{{\mbox{\boldmath$\pi$}}}
\newcommand{\sfd}{{\sf d}}
\newcommand{\sfS}{{\sf S}}
\newcommand{\rme}{{\mathrm e}}
\newcommand{\Kliminf}{K\kern-3pt-\kern-2pt\mathop{\rm lim\,inf}\limits}  
\newcommand{\supp}{\mathop{\rm supp}\nolimits}   
\newcommand{\diam}{\mathop{\rm diam}\nolimits}   
\newcommand{\Lip}{\mathop{\rm Lip}\nolimits}          
\renewcommand{\d}{{\mathrm d}}
\newcommand{\dt}{{\d t}}
\newcommand{\restr}[1]{\lower3pt\hbox{$|_{#1}$}}
\newcommand{\down}{\downarrow}              
\newcommand{\up}{\uparrow}
\newcommand{\eps}{\varepsilon}  
\newcommand{\nchi}{{\raise.3ex\hbox{$\chi$}}}
\newcommand{\forevery}{\text{for every }}
\newcommand{\Pc}[2]{\overline{#1}\kern-2pt^{\vphantom 0}_{#2}}
\newcommand{\Probabilities}[1]{\mathscr P(#1)}          
\newcommand{\ProbabilitiesTwo}[1]{\mathscr P_2(#1)}     
\numberwithin{equation}{section}
\newtheorem{theorem}{Theorem}[section]
\newtheorem{lemma}[theorem]{Lemma}
\newtheorem{proposition}[theorem]{Proposition}
\newtheorem{definition}[theorem]{Definition}
\newtheorem{remark}[theorem]{Remark}
\renewcommand{\mm}{\mathfrak m}
\renewcommand{\nn}{\mathfrak n}
\newcommand{\ent}[1]{{\rm Ent}_{\mm}(#1)}              
\newcommand{\entv}{{\rm Ent}_{\mm}}                    
\newcommand{\prob}{\Probabilities}
\newcommand{\probt}{\ProbabilitiesTwo}
\newcommand{\geo}{{\rm{Geo}}}                       
\newcommand{\e}{{\rm{e}}}                           
\newcommand{\gopt}{{\rm{OptGeo}}}                   
\newcommand{\fr}{\hfill$\blacksquare$}                      
\newcommand{\weakgrad}[1]{|D #1|_w}                
\renewcommand{\C}{{\sf Ch}_*}
\newcommand{\Wgh}{V}
\newcommand{\heatl}{{\sf h}}
\newcommand{\heatw}{{\mathscr H}}
\newcommand{\calW}{{\mathcal S}^2}
\newcommand{\Curvesnonpara}[1]{\mathscr C(#1)}
\newcommand{\res}{\mathop{\hbox{\vrule height 7pt width .5pt depth 0pt
\vrule height .5pt width 6pt depth 0pt}}\nolimits} 
\newcommand{\Goodmeasures}{{\mathcal M}}
\renewcommand{\C}{{\sf Ch}}
\newcommand{\Gbil}[2]{G(#1,#2)}   
\newcommand{\ke}[2]{\heatw_{#2}(\delta_{#1})}
\newcommand{\ked}[2]{\rho_{#2}[#1]}
\title{Riemannian Ricci curvature lower bounds in metric \\measure spaces with $\sigma$-finite measure}
\begin{document}

\author{Luigi Ambrosio\
   \thanks{Scuola Normale Superiore, Pisa, \textsf{l.ambrosio@sns.it}}
   \and
   Nicola Gigli\
   \thanks{University of Nice, \textsf{nicola.gigli@unice.fr}}
   \and
   Andrea Mondino
   \thanks{Scuola Normale Superiore, Pisa, \textsf{andrea.mondino@sns.it}}
   \and
   Tapio Rajala
   \thanks{University of Jyv\"askyl\"a, \textsf{tapio.m.rajala@jyu.fi}}
   }

\maketitle

\begin{abstract}
In prior work \cite{Ambrosio-Gigli-Savare11b} of the first two authors with Savar\'e, a new Riemannian notion of lower bound for Ricci curvature 
in the class of metric measure spaces $(X,\sfd,\mm)$ was introduced, and the corresponding class of spaces denoted by
$RCD(K,\infty)$. This notion relates the $CD(K,N)$ theory of Sturm and Lott-Villani, in the case $N=\infty$, to the Bakry-Emery approach. In
\cite{Ambrosio-Gigli-Savare11b} the $RCD(K,\infty)$ property is defined in three equivalent ways and several properties of 
$RCD(K,\infty)$ spaces, including the regularization properties of the heat flow,
the connections with the theory of Dirichlet forms and the stability under tensor products, are provided. In \cite{Ambrosio-Gigli-Savare11b}
only finite reference measures $\mm$ have been considered. The goal of this paper is twofold: on one side we extend these results to general $\sigma$-finite spaces, on the other we remove a technical assumption appeared in  \cite{Ambrosio-Gigli-Savare11b} concerning a strengthening of the $CD(K,\infty)$ condition. This more general class of spaces
includes Euclidean spaces endowed with Lebesgue measure, complete noncompact Riemannian manifolds with bounded geometry and the
pointed metric measure limits of manifolds with lower Ricci curvature bounds. 
\end{abstract}

\tableofcontents

\section{Introduction}

In a recent paper \cite{Ambrosio-Gigli-Savare11b} written jointly with Savar\'e, the first and second author introduced a notion of Riemannian Ricci lower
bound for metric measure spaces $(X,\sfd,\mm)$, relying on the calculus tools they had developed in \cite{Ambrosio-Gigli-Savare11}. 
This definition, in the spirit of the $CD(K,N)$ theory proposed by Lott-Villani \cite{Lott-Villani09} and
Sturm \cite{Sturm06I,Sturm06II} relies on optimal transportation tools and suitable convexity properties of the relative entropy functional
${\rm Ent}_\mm$. In the framework of \cite{Ambrosio-Gigli-Savare11b}, these conditions are enforced
adding the assumption that the so-called Cheeger energy (playing here the role of the classical Dirichlet energy) is quadratic.\\
More precisely, the class of $RCD(K,\infty)$ spaces of \cite{Ambrosio-Gigli-Savare11b} can be defined in 3 equivalent
ways thanks to this equivalence result (see $\S\ref{ssevi}$ for the precise formulation of gradient flows involved here, in 
the metric sense and in the $EVI_K$ sense):

\begin{theorem}\label{thm:main}
 {\rm \cite{Ambrosio-Gigli-Savare11b} }Let $(X,\sfd,\mm)$ be a metric measure space with $(X,\sfd)$ complete and separable, $\mm(X)\in (0,\infty)$
 and $\supp\mm=X$. Then the following 
 are equivalent.
 \begin{enumerate}
  \item[(i)] $(X,\sfd,\mm)$ is a strict $CD(K,\infty)$ space and the $W_2$-gradient flow $\heatw_t$ of $\entv$ on $\ProbabilitiesTwo{X}$ is additive.
  \item[(ii)] $(X,\sfd,\mm)$ is a strict $CD(K,\infty)$ space and $\C$ is a quadratic form on $L^2(X,\mm)$.
  \item[(iii)] $(X,\sfd,\mm)$ is a length space and any $\mu \in \ProbabilitiesTwo{X}$ is the starting point of an $EVI_K$
                gradient flow of $\entv$.
 \end{enumerate}
\end{theorem}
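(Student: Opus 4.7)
My plan is to close the triangle via $(ii)\Rightarrow(iii)\Rightarrow(i)\Rightarrow(ii)$, using two pre-existing bridges: the identification (available under strict $CD(K,\infty)$) of the $W_2$-metric gradient flow $\heatw_t$ of $\entv$ with the $L^2$-gradient flow $\heatl_t$ of $\C$ on densities, and Kuwada's duality linking $W_2$-contraction of $\heatw_t$ with Bakry-\'Emery-type gradient estimates for the dual semigroup acting on Lipschitz (Kantorovich) potentials.

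For $(ii)\Rightarrow(iii)$, I would first note that strict $CD(K,\infty)$ gives $K$-geodesic convexity of $\entv$ along $W_2$-geodesics and forces $(X,\sfd)$ to be a length space. Quadraticity of $\C$ turns it into a symmetric, strongly local Dirichlet form on $L^2(X,\mm)$, so $\heatl_t$ is a linear Markov semigroup. Linearity of $\heatl_t$, combined with the entropy dissipation identity (derivative of $\entv$ along $\heatw_t$ equals minus the Fisher information), yields via the Kuwada argument the sharp contraction $W_2(\heatw_t\mu,\heatw_t\nu)\le e^{-Kt}W_2(\mu,\nu)$. Contraction plus $K$-convexity are then equivalent, by the Daneri-Savar\'e characterization, to the $EVI_K$-property of the flow from every initial datum $\mu\in\ProbabilitiesTwo X$, which is exactly (iii).

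For $(iii)\Rightarrow(i)$, the general theory of $EVI_K$-flows automatically produces $K$-geodesic convexity of $\entv$ along $W_2$-geodesics (hence $CD(K,\infty)$) together with uniqueness and $W_2$-contraction of the flow. Strictness is recovered by using the $EVI_K$-curves themselves as approximating absolutely continuous curves carrying the required entropy bounds. Additivity of $\heatw_t$ then follows from the identification of this unique flow with a linear $L^2$-semigroup on densities, a linearity enforced by the contraction implicit in $EVI_K$. For $(i)\Rightarrow(ii)$, additivity of $\heatw_t$ transfers through the density identification to linearity of $\heatl_t$, and in a Hilbert space a convex gradient flow semigroup is linear precisely when the generating functional is quadratic; this forces $\C$ to be quadratic on $L^2(X,\mm)$.

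The hardest point is $(ii)\Rightarrow(iii)$: extracting the sharp $W_2$-contraction of $\heatw_t$ from quadraticity of $\C$ combined with $CD(K,\infty)$, in the absence of any a priori pointwise Bochner estimate. This hinges on a delicate interplay between Kuwada's duality, the action of the Hopf-Lax semigroup on Kantorovich potentials, and the entropy dissipation identity along $\heatw_t$. Once the contraction and the $K$-convexity of $\entv$ are both in hand, promoting the metric gradient flow to an $EVI_K$-flow is essentially formal.
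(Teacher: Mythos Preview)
Your proposal has a genuine gap in the implication $(ii)\Rightarrow(iii)$, and it is not the approach taken in \cite{Ambrosio-Gigli-Savare11b} (or in the present paper's $\sigma$-finite extension).

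The critical error is the claim that ``contraction plus $K$-convexity are equivalent, by the Daneri--Savar\'e characterization, to the $EVI_K$-property.'' Daneri--Savar\'e \cite{Daneri-Savare08} establishes only one direction: $EVI_K$ implies $K$-geodesic convexity and $\rme^{-Kt}$-contraction. The converse fails in general metric measure spaces; contractivity of the heat flow in $W_2$ together with $K$-convexity of the entropy is strictly weaker than $EVI_K$ (this is precisely why Finsler $CD(K,\infty)$ spaces with linear Laplacian can satisfy contraction without being $RCD$). So even if you obtained the sharp contraction, you could not conclude. There is also a circularity in how you propose to reach contraction: Kuwada's duality takes the Bakry--\'Emery gradient estimate $\weakgrad{\heatl_t f}^2\le\rme^{-2Kt}\heatl_t\weakgrad{f}^2$ as \emph{input} and outputs $W_2$-contraction. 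In this framework Bakry--\'Emery (Theorem~\ref{thm:bakryemery}) is proved only \emph{after} $EVI_K$ is established; there is no known direct derivation of it from $CD(K,\infty)$ plus quadraticity of $\C$ that bypasses $EVI_K$.

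The paper's route for $(ii)\Rightarrow(iii)$ is entirely different and does not pass through contraction or Bakry--\'Emery at all. One establishes the $EVI_K$ differential inequality \eqref{eq:EVI} \emph{directly} for $\mu_t=\heatw_t\mu$ by proving the two estimates \eqref{eq:florence1} and \eqref{eq:florence2}: the first bounds $\tfrac{\d}{\d t}\tfrac12 W_2^2(\mu_t,\sigma)$ from above by the one-sided directional derivative of $\C$ at $f_t$ in the direction of a Kantorovich potential $-\varphi_t$; the second bounds the directional derivative of $\C$ at $\varphi_t$ in the direction $-f_t$ from above by the right-hand side of $EVI_K$, using the $CD(K,\infty)$ convexity along a Wasserstein geodesic from $\mu_t$ to $\sigma$. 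When $\C$ is quadratic, both one-sided directional derivatives collapse to the same bilinear quantity $-\mathcal E(f_t,\varphi_t)$ (equivalently $-\mathcal E_{\mu_t}(\log f_t,\varphi_t)$), and chaining the two inequalities gives \eqref{eq:EVI}. The substantial work lies in making \eqref{eq:florence2} rigorous, which requires $L^\infty$ bounds on the interpolating densities (Section~\ref{sec:Tapio}) and, in the $\sigma$-finite case, passing to weighted Sobolev spaces because $\log f_t$ need not lie in $\calW$. Your sketch does not touch this mechanism.
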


This equivalence is crucial for the study of the spaces $RCD(K,\infty)$: for instance the fine properties of the heat
flow and the Bakry-Emery condition obtained in \cite{Ambrosio-Gigli-Savare11b} need (ii), while stability of $RCD(K,\infty)$ spaces under
Sturm's convergence \cite{Sturm06II} of metric measure spaces (a variant of measured Gromov-Hausdorff convergence)
depends in a crucial way on (iii) and on the stability properties of $EVI_K$ flows of \cite{Ambrosio-Gigli-Savare08}.

The aim of this paper is the extension of the theory of $RCD(K,\infty)$ spaces to a class of $\sigma$-finite metric measure
spaces. This extension includes fundamental examples such as the Lebesgue measure in $\R^n$,
noncompact Riemannian manifolds with bounded geometry and the pointed metric measure limits of manifolds with
lower Ricci curvature bounds studied by Cheeger and Colding \cite{Cheeger-Colding97I, Cheeger-Colding97II, Cheeger-Colding97III}. In our class of spaces  
we obtain the perfect analogue of Theorem~\ref{thm:main} (see Theorem~\ref{thm:main1}).
Actually, even in the finite case we improve Theorem~\ref{thm:main}, replacing strict $CD(K,\infty)$ with $CD(K,\infty)$ in (i)
and (ii): this is possible mainly thanks to the fine results of Section~\ref{sec:Tapio}.

Let us now briefly and informally explain the terminology implicit in Theorem~\ref{thm:main} and the technical difficulties arising when one considers
$\sigma$-finite reference measures $\mm$. Cheeger's energy $\C$ can be defined in $L^2(X,\mm)$ by a relaxation procedure
$$
\C(f):=\frac 12\inf\left\{\liminf_{h\to\infty}\int_X|Df_h|^2\,\d\mm:\ \text{$f_h$ Lipschitz, $f_h\to f$ in $L^2(X,\mm)$}\right\},
$$
where $|Df|$ is the slope, see \eqref{eq:slope}. Instead of this direct construction, 
we shall exclusively work in this paper with another equivalent one (equivalence follows by
Theorem~6.2 of \cite{Ambrosio-Gigli-Savare11}), based on the notion of weak upper gradient $\weakgrad{f}$, see Definition~\ref{def:wug}.
The weak upper gradient provides integral representation for $\C$, namely 
$$
\C(f)=\frac12\int_X\weakgrad{f}^2\,\d\mm\qquad\text{whenever $\C(f)<\infty$}.
$$ 
Since $\C$ is convex and lower semicontinuous on $L^2(X,\mm)$, its gradient flow ${\sf h}_tf$ is well defined starting from any initial
condition. One of the main results of \cite{Ambrosio-Gigli-Savare11} is the coincidence of ${\sf h}_t $ with the quadratic optimal transport distance semigroup 
$\heatw_t$ (the $W_2$ gradient flow of ${\rm Ent}_\mm$) under the $CD(K,\infty)$ assumption: more precisely, if $f\in L^2(X,\mm)$ and 
$\int f(x)\sfd^2(x,x_0)\,\d\mm(x)$ is finite, then $\heatw_t(f\mm)=({\sf h}_t f)\mm$, see Theorem~\ref{thm:heatgf}. 
This explains the connection between (i) and (ii), where finiteness of $\mm$ does not play any role. Passing to
the $EVI_K$ condition, deeply studied by the first two authors and Savar\'e in \cite{Ambrosio-Gigli-Savare08} and by
Daneri and Savar\'e in \cite{Daneri-Savare08}, it amounts (see Definition~\ref{def:EVIK}) to a family of differential inequalities indexed
by $\sigma\in\probt{X}$:
\begin{equation}\label{eq:EVI}
\frac{\d}{\d t}\frac 12W_2^2(\mu_t,\sigma)\leq {\rm Ent}_\mm(\sigma)-{\rm Ent}_\mm(\mu_t)-\frac{K}{2}W_2^2(\mu_t,\sigma)
\quad\text{for a.e. $t\in (0,\infty)$.}
\end{equation}
Set $\mu_t=({\sf h}_t f)\mm$ and let $\varphi_t$ be Kantorovich potentials from $\mu_t$ to $\sigma$.
The analysis in \cite{Ambrosio-Gigli-Savare11b} shows that
\begin{equation}\label{eq:florence1}
\frac{\d}{\dt}\frac 12 W_2^2(\mu_t,\sigma)\leq \lim_{\eps\downarrow 0}\frac{\C(f_t-\eps\varphi_t)-\C(f_t)}{\eps}
\end{equation}
on the one hand, and that the $CD(K,\infty)$ condition gives
\begin{equation}\label{eq:florence2}
\lim_{\eps\downarrow 0}\frac{\C(\varphi_t-\eps f_t)-\C(\varphi_t)}{\eps}
\leq {\rm Ent}_\mm(\sigma)-{\rm Ent}(\mu_t)-\frac{K}{2}W_2^2(\mu_t,\sigma)
\end{equation}
on the other hand.
If $\C$ is quadratic, then we can formally write that both the right hand side in \eqref{eq:florence1} and the left hand
side in \eqref{eq:florence2} coincide with $-\int_X D f_t\cdot D\varphi_t\,\d\mm$, thus providing the connection from
(ii) to (iii). However, in the derivation of
\eqref{eq:florence2} a key role is played by the Sobolev regularity of $\log f_t$, that can be easily achieved if
$f_t\geq c>0$. But, this assumption is not compatible with the $\sigma$-finite case, since $f_t$ is a probability
density, and even local space-time lower bounds on $f_t$ can hardly be obtained in our framework, where no
finite dimensionality assumption on $(X,\sfd,\mm)$ is made. It turns out that this derivation is still possible, but
only working in a time-dependent weighted Sobolev space: formally we write 
$$
\int_X Df_t\cdot D\varphi_t\,\d\mm=\int_X D\log f_t\cdot D\varphi_t\,\d(f_t\mm)
$$
and, thanks to the energy dissipation estimate
$$
{\rm Ent}_\mm(f_T\mm)+\int_0^T\int_X\frac{\weakgrad{f_t}^2}{f_t}\,\d\mm\,\d t\leq{\rm Ent}_\mm(f\mm),
$$
we know that $\log f_t$ belongs for a.e. $t$ to the Sobolev space with weight $f_t$. Then we prove that for a.e. $t>0$
the first inequality \eqref{eq:florence1} holds, when written in terms of weighted Sobolev spaces, for any choice of the Kantorovich potential 
$\varphi_t$, while the second inequality \eqref{eq:florence2} holds for at least one. This suffices for the derivation of \eqref{eq:EVI}.

Besides the application to $\sigma$-finite $RCD(K,\infty)$ spaces, several results of this paper have an independent
interest and do not rely on curvature assumptions: see, for instance, Lemma~\ref{lem:GammaConvKant} which provides compactness properties 
of Kantorovich potentials and Theorem~\ref{thm:change}, which analyzes the weighted Cheeger energies. 
Also, it is worthwhile to mention
that existence of geodesics with $L^\infty$ bounds of Section~\ref{sec:Tapio} applies to $\sigma$-finite $CD(K,\infty)$ spaces, 
i.e. no quadratic assumption
on $\C$ is needed for the results of the section. Also, since finiteness of $\mm$ was used in \cite{Ambrosio-Gigli-Savare11b}
essentially only for the equivalence of Theorem~\ref{thm:main}, we describe in the last section the properties of $RCD(K,\infty)$ spaces 
proved in \cite{Ambrosio-Gigli-Savare11b}, whose proof extends with no additional effort to the $\sigma$-finite case: among them we just mention the
Bakry-Emery condition
$$
\weakgrad {({\sf h}_t f)}^2\leq\rme^{-2Kt}\weakgrad{f}^2\qquad\text{$\mm$-a.e. in $X$.}
$$
Further analysis of the Bakry-Emery condition will appear in the forthcoming paper \cite{AGSBaEm}.
The extension of the stability of the $RCD(K,\infty)$ condition under Sturm's metric measure convergence to
the $\sigma$-finite case is far from being trivial. We refer to  \cite{AmbrosioGigliMondinoSavare} for the positive answer to this question.

The paper is organized as follows. In Section~\ref{sec:preliminaries} we gather a few facts on relative entropy and optimal
transportation, mostly stated without proofs (standard references are \cite{Ambrosio-Gigli11}, \cite{Ambrosio-Gigli-Savare08},
\cite{Villani09}); the only original contribution is a compactness result for Kantorovich potentials via De Giorgi's $\Gamma$-convergence stated in
Lemma~\ref{lem:GammaConvKant}. 

In Section~\ref{sec:Cheeger} we recall the main results of the theory of weak gradients
as developed by the first two authors with Savar\'e in \cite{Ambrosio-Gigli-Savare11}, emphasizing also the connections with the points of view 
developed by Cheeger in \cite{Cheeger00}, Koskela-MacManus in \cite{Koskela-MacManus} and 
Shanmugalingam in \cite{Shanmugalingam00}. The main result of the section is Theorem~\ref{thm:change} which states that, 
for probability densities $\rho=g\mm$ with $g\in L^\infty(X,\mm)$ and $\C(\sqrt{g})<\infty$, roughly speaking weak gradients w.r.t to $\mm$ and
weak gradients with respect to $\rho$ are the same, even though no (local) lower bound on $g$ is assumed. Furthermore,
Cheeger's energy ${\C}_\rho$ induced by $\rho$ is quadratic if $\C$ is quadratic. Section~\ref{sec:Tapio} is crucial for the development
of (short time) $L^\infty$ estimates for displacement interpolation in $CD(K,\infty)$ spaces (see Theorem~\ref{thm:goodgeodesics} for a
precise statement) which are new in the situation when 
$(X,\sfd)$ is unbounded and $\mm$ is not finite. These estimates,
which hold when the density of the first measure decays at least as $c_1\rme^{-c_2\sfd^2(x,x_0)}$ for some $c_1,\,c_2>0$ and the second measure has 
bounded density and support, are obtained combining carefully entropy minimization (an approach proposed by Sturm and
then developed  by Rajala in \cite{R2011b,R2012}) and splitting of optimal geodesic plans. Section~\ref{sec:auxiliary}
is devoted to the proof of some auxiliary convergence results dealing with entropy, difference quotients of probability densities and Kantorovich
potentials, bilinear form ${\C}_\rho$ associated to a measure $\rho\in\probt{X}$ as in Section~\ref{sec:Cheeger}. Section~\ref{sec:lastq}
contains the proof of Theorem~\ref{thm:main1}, which provides the equivalence result analogous to Theorem~\ref{thm:main} 
in the present $\sigma$-finite setting.

\smallskip
\noindent {\bf Acknowledgement.} The authors warmly thank Giuseppe Savar\'e for his detailed and helpful comments
on a preliminary version of this paper and the reviewer for his constructive comments. The authors acknowledge the support
of the ERC ADG GeMeThNES. T.R. acknowledges the support of the Academy of Finland, project
no. 137528.

\section{Preliminaries}\label{sec:preliminaries}

In this section we introduce our notation, including the relative entropy functional ${\rm Ent}_\nn$ in \eqref{eq:defRelentropy},
the slope $|Df|$ of a function $f$ in \eqref{eq:slope}, the one-sided slopes $|D^\pm f|$ in
\eqref{eq:onesidedslopes}, the class $AC^p(J;X)$ of absolutely continuous curves with metric derivative in $L^p(J)$, 
the class of geodesics \eqref{defgeo} and the notions of geodesic and length space. We then review optimal transport,
prove the existence of special Lipschitz Kantorovich potentials (Proposition~\ref{prop:goodKant}) and prove a compactness theorem of
Kantorovich potentials (Lemma~\ref{lem:GammaConvKant}).

We assume throughout the paper that $(X,\sfd,\mm)$ is a metric measure space with $(X,\sfd)$ complete and separable  and  $\mm$ being a nonnegative 
Borel measure finite on bounded sets and satisfying $\supp\mm=X$.

We denote by $\prob X$ the space of Borel probability measures on $(X,\sfd)$ and
set \[
\probt X :=\Big\{\mu\in\prob X\ :\ \int_X\sfd^2(x_0,x)\,\d\mu(x)<\infty\,\,\,\text{for some (and hence all) $x_0\in X$}\Big\}.
\]
Given a nonnegative Borel measure $\nn$, the \emph{relative entropy functional} ${\rm Ent}_\nn:\probt{X}\to [-\infty,\infty]$ 
with respect to $\nn$ is defined as in Sturm's paper \cite{Sturm06I} by
\begin{equation}\label{eq:defRelentropy}
{\rm Ent}_\nn(\mu):=
\begin{cases}
\lim\limits_{\epsilon\downarrow 0}\int_{\{\rho>\epsilon\}} \rho\log \rho\,\d\nn& \text{if $\mu=\rho\nn$}; \\
\infty & \text{otherwise.}
\end{cases}
\end{equation}
It coincides with $\int_{\{\rho>0\}}\rho\log\rho\,\d\nn\in [-\infty,\infty)$ if the positive part of $\rho\log\rho$ is $\nn$-integrable,
and it is equal to $\infty$ otherwise.

In the sequel we use the notation
\begin{equation}\label{def:DDDD}
D({\rm Ent}_\nn):=\left\{\mu\in\probt{X}:\ {\rm Ent}_\nn(\mu)\in [-\infty,\infty)\right\}.
\end{equation}

By Jensen's inequality, ${\rm Ent}_\nn$ is nonnegative when $\nn\in\prob{X}$. More generally,
we recall (see \cite[Lemma~7.2]{Ambrosio-Gigli-Savare11} for the simple proof) that when $\nn$ satisfies the growth condition
\begin{equation}\label{eq:growthcond}
\int_X\rme^{-{\sf c}\sfd^2(x_0,x)}\,\d\nn(x) < \infty,
\end{equation} 
for some $x_0\in X$ and ${\sf c}\in (0,\infty)$, then ${\rm Ent}_\nn$ can bounded from below as follows. Letting $z=\int_X\rme^{-c\sfd^2(x,x_0)}\,\d\nn$ and 
\begin{equation}\label{eq:grygorian2}
\tilde\nn=\frac 1z \rme^{-c\sfd^2(x,x_0)}\nn\in\Probabilities{X},
\quad\qquad\Wgh(x)=\sfd(x,x_0),
\end{equation}
and using the simple formula for the change of the reference measure
\begin{equation}\label{eq:changeentropy}
{\rm Ent}_\nn(\mu)={\rm Ent}_{\tilde\nn}(\mu)-c\int_X\Wgh^2\,\d\mu-\log z,\qquad\forall\mu\in\probt X,
\end{equation}
we see that ${\rm Ent}_\nn$ can be bounded from below in terms of the second moment of $\mu$. It is important to recall that if $(X,\sfd,\mm)$ is a $CD(K,\infty)$ space (see Definition~\ref{def:CD}), then the reference measure $\mm$ always satisfies the growth condition \eqref{eq:growthcond}, 
as shown by Sturm in \cite[Theorem 4.24]{Sturm06I}.

\subsection{Metric structure}

We shall denote by ${\rm Lip}(X)$ the space
of Lipschitz functions $f:X\to\R$ and by ${\rm Lip}_b(X)$ the subspace of bounded
Lipschitz functions.

Given $f:X\to\R$ we define its slope $|Df|$ at $x$ by
\begin{equation}\label{eq:slope}
|Df|(x):=\limsup_{y\to x}\frac{|f(y)-f(x)|}{\sfd(y,x)}.
\end{equation}
We shall also use, in connection with Kantorovich potentials,
the one-sided counterparts of the slope, namely the ascending slope and descending slopes:
\begin{equation}\label{eq:onesidedslopes}
|D^+f|(x):=\limsup_{y\to x}\frac{[f(y)-f(x)]^+}{\sfd(y,x)},\qquad
|D^-f|(x):=\limsup_{y\to x}\frac{[f(y)-f(x)]^-}{\sfd(y,x)}.
\end{equation}

Given an open interval $J\subset\R$, an
exponent $p\in [1,\infty]$ and $\gamma:J\to X$, we say that
$\gamma$ belongs to $AC^p(J;X)$ if there exists $g\in L^p(J)$ satisfying
$$
\sfd(\gamma_s,\gamma_t)\leq\int_s^t g(r)\,\d r\qquad\forall s,\,t\in
J,\,\,s<t.
$$
The case $p=1$ corresponds to \emph{absolutely continuous} curves, denoted $AC(J;X)$. 
It turns out that, if $\gamma$ belongs to $AC^p(J;X)$, there is a minimal function $g$ with this property, called
\emph{metric derivative} and given for a.e. $t\in J$ by
$$
|\dot\gamma_t|:=\lim_{s\to t}\frac{\sfd(\gamma_s,\gamma_t)}{|s-t|}.
$$
See \cite[Theorem~1.1.2]{Ambrosio-Gigli-Savare08} for the simple
proof. We say that an absolutely continuous curve $\gamma_t$ has
\emph{constant speed} if $|\dot\gamma_t|$ is (equivalent to) a
constant.

We call $(X,\sfd)$ \emph{a geodesic space} if for any $x_0,\,x_1\in
X$ there exists 
$\gamma:[0,1]\to X$ satisfying $\gamma_0=x_0$, $\gamma_1=x_1$ and
\begin{equation}\label{defgeo}
\sfd(\gamma_s,\gamma_t)=|t-s|\sfd(\gamma_0,\gamma_1)\qquad\forall s,\,t\in
[0,1].
\end{equation}
We will denote by $\geo(X)$ the space of all constant speed
geodesics $\gamma:[0,1]\to X$, namely $\gamma\in\geo(X)$ if
\eqref{defgeo} holds. Recall also that the weaker notion of \emph{length} space: 
for all $x_0,\,x_1\in X$ and $\eps>0$ there exists $\gamma\in AC([0,1];X)$ such
that $\int_0^1|\dot\gamma_t|\,\d t<\sfd(x_0,x_1)+\eps$.

{F}rom the measure-theoretic point of view, when considering measures
on $AC^p(J;X)$ (resp. $\geo(X)$), we shall consider
them as measures on the Polish space $C(J;X)$ endowed with the sup
norm, concentrated on the Borel set $AC^p(J;X)$ (resp. closed set $\geo(X)$). We shall also use the
notation $\e_t:C(J;X)\to X$, $t\in J$, for the evaluation map at time $t$, namely
$\e_t(\gamma):=\gamma_t$.

\subsection{Optimal transport}

Given $\mu,\,\nu\in\probt X$, we define the quadratic optimal transport distance $W_2$
between them as
\begin{equation}\label{eq:W2def}
W_2^2(\mu,\nu):=\inf\int_{X\times X} \sfd^2(x,y)\,\d\ggamma(x,y),
\end{equation}
where the infimum is taken among all Kantorovich transport plans, namely 
probability measures $\ggamma$ on $X\times X$
such that
\[
\pi^1_\sharp\ggamma=\mu,\qquad\pi^2_\sharp\ggamma=\nu.
\]
Here, for $\mu\in \prob X$, a topological space $Y$ and a $\mu$-measurable
map $T:X\to Y$, the push-forward measure $T_\sharp \mu\in \prob Y$ is defined by
$T_\sharp\mu(B):=\mu(T^{-1}(B))$ for every Borel set $B\subset Y$.

Since $(X,\sfd)$ is complete and separable,
the space $(\probt X,W_2)$ is complete and separable. Since the cost $\sfd^2$ is lower semicontinuous,
the infimum in the definition \eqref{eq:W2def} of $W_2^2$ is attained.
All plans $\ggamma$ achieving the minimum will be called optimal.

For all $\mu,\,\nu\in\probt{X}$ Kantorovich's duality formula holds:
\begin{equation}
\label{eq:dualitabase} \frac12 W_2^2(\mu,\nu)=\sup
\left\{\int_X\varphi\,\d\mu+\int_X\psi\,\d\nu:\
\varphi(x)+\psi(y)\leq \frac12 \sfd^2(x,y)\right\},
\end{equation}
where the supremum is taken among all functions $\varphi\in L^1(X,\mu)$ and $\psi\in L^1(X,\nu)$.

Recall that the \emph{$c$-transform} $\varphi^c$
of $\varphi:X\to\R\cup\{-\infty\}$ is defined by
\[
\varphi^c(y):=\inf\left\{\frac{\sfd^2(x,y)}2-\varphi(x):\ x\in X\right\}
\]
and that $\psi$ is said to be \emph{$c$-concave} if
$\psi=\varphi^c$ for some $\varphi$.

\begin{definition}[Kantorovich potential]\label{def:Kant}
We say that a map $\varphi:X\to\R\cup\{-\infty\}$ is a Kantorovich potential relative to $(\mu,\nu)$ if:
\begin{itemize}
\item[(i)] there exists a  Borel map $\psi:X\to\R\cup\{-\infty\}$ such that $\psi\in L^1(X,\nu)$ and $\varphi=\psi^c$;
\item[(ii)] $\varphi\in L^1(X,\mu)$ and the pair $(\varphi,\psi)$ maximizes \eqref{eq:dualitabase}.
\end{itemize}
\end{definition}
Notice that the inequality $\varphi(x)+\psi(y)\leq\tfrac12\sfd^2(x,y)$, when integrated against an optimal plan
$\ggamma$, forces the integrability of the positive part of $\varphi$. For this reason, in (ii) we may equivalently
require integrability of the negative part of $\varphi$ only.  In the next proposition we illustrate
some key properties of Kantorovich potentials $\varphi$ and show how, in the special case when $\supp\nu$ is bounded,
a special choice of $\psi$ provides better properties of $\varphi=\psi^c$.

\begin{proposition}[Existence of Kantorovich potentials]\label{prop:goodKant}
If $\mu,\,\nu\in\probt{X}$, then a Kantorovich potential $\varphi=\psi^c$ relative to $(\mu,\nu)$
exists and satisfies 
\begin{equation}\label{eq:itforza1}
\varphi(x)+\psi(y)=\tfrac 12\sfd^2(x,y)\qquad\text{for $\ggamma$-a.e. in $(x,y)\in X\times X$}
\end{equation}
for any optimal Kantorovich plan $\ggamma$ and 
\begin{equation}\label{eq:itforza}
|D^+\varphi|(x)\leq\sfd(x,y)\qquad\text{for $\ggamma$-a.e. $(x,y)$.}
\end{equation}
In addition, if $\supp\nu\subset\overline{B}_R(y_0)$ for some $R\geq 1$, then a locally Lipschitz
Kantorovich potential $\varphi=\psi^c$ exists
with $\psi\equiv -\infty$ on $X\setminus\supp\nu$, $\psi\leq R^2/2$ on $\supp\nu$ and
\begin{equation}\label{GrowthKant}
|D \varphi| (x) \leq R+\sfd(x,y_0), \qquad |\varphi(x)| \leq 2R^2 (1+\sfd^2(x,y_0)).
\end{equation} 
\end{proposition}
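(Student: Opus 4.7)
The proposition naturally splits into the general existence statement (a Kantorovich potential of the form $\psi^c$ realising the tight equality \eqref{eq:itforza1} on $\supp\ggamma$ and the slope bound \eqref{eq:itforza}) and the refined statement under the support assumption on $\nu$.

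For the general part, Kantorovich duality \eqref{eq:dualitabase} on the Polish space $(\probt{X},W_2)$ is attained: the supremum is realised by a pair $(\varphi_0,\psi_0)$ with $\varphi_0\in L^1(\mu)$, $\psi_0\in L^1(\nu)$. Replacing $\psi_0$ by $\psi_0^{cc}$ and $\varphi_0$ by $\psi_0^{ccc}=\psi_0^c$ via the standard $c$-concavification argument (which only improves the dual functional) I may assume $\varphi_0=\psi_0^c$. Integrating the pointwise inequality $\varphi_0(x)+\psi_0(y)\le\tfrac12\sfd^2(x,y)$ against any optimal plan $\ggamma$ and observing that both sides have the same total integral $\tfrac12 W_2^2(\mu,\nu)$ forces the tight identity \eqref{eq:itforza1} $\ggamma$-almost everywhere. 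For any such $(x,y)$ and every $z\in X$, combining $\varphi_0(z)\le\tfrac12\sfd^2(z,y)-\psi_0(y)$ with the tight identity at $x$ gives
\[
\varphi_0(z)-\varphi_0(x)\le\tfrac12\bigl(\sfd^2(z,y)-\sfd^2(x,y)\bigr)\le\tfrac12\bigl(\sfd(z,y)+\sfd(x,y)\bigr)\sfd(z,x);
\]
dividing by $\sfd(z,x)$ and letting $z\to x$ yields \eqref{eq:itforza}.

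For the refined statement, I would take the pair $(\varphi_0,\psi_0)$ just produced, shift by the constant $c:=R^2/2-\sup_{\supp\nu}\psi_0$ (the supremum is finite because $\psi_0(y)\le\tfrac12\sfd^2(\bar x,y)-\varphi_0(\bar x)$ for any $\bar x$ where $\varphi_0$ is finite), and then restrict by setting
\[
\psi(y):=\begin{cases}\psi_0(y)+c,&y\in\supp\nu,\\ -\infty,&y\in X\setminus\supp\nu,\end{cases}\qquad\varphi:=\psi^c.
\]
Since $\nu$ is concentrated on $\supp\nu$, the integral $\int\psi\,\d\nu$ is unchanged by the restriction, while $\psi\le\psi_0+c$ pointwise gives $\varphi=\psi^c\ge(\psi_0+c)^c=\varphi_0-c$; hence $(\varphi,\psi)$ still maximises \eqref{eq:dualitabase} and $\varphi=\psi^c$ with $\psi\equiv-\infty$ off $\supp\nu$ and $\psi\le R^2/2$ on $\supp\nu$ as required. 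Taking a near-maximiser of $\psi$ yields the upper bound $\varphi(x)\le\tfrac12(\sfd(x,y_0)+R)^2-\tfrac{R^2}{2}=\tfrac12\sfd^2(x,y_0)+R\,\sfd(x,y_0)$, while $\psi\le R^2/2$ together with $\sfd^2\ge 0$ gives the lower bound $\varphi(x)\ge -R^2/2$; using $R\ge 1$ and $R\,\sfd(x,y_0)\le\tfrac12(R^2+\sfd^2(x,y_0))$ both inequalities combine to $|\varphi(x)|\le 2R^2(1+\sfd^2(x,y_0))$.

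The subtlest point is the pointwise slope bound in \eqref{GrowthKant}. For each $z$, I would pick a near-minimiser $y_z\in\supp\nu$ of $y\mapsto\tfrac12\sfd^2(x,y)-\psi(y)$ with tolerance $\sfd^3(z,x)$; combining with $\varphi(z)\le\tfrac12\sfd^2(z,y_z)-\psi(y_z)$ produces
\[
\varphi(z)-\varphi(x)\le\tfrac12\bigl(\sfd^2(z,y_z)-\sfd^2(x,y_z)\bigr)+\sfd^3(z,x)\le\sfd(x,y_z)\sfd(z,x)+\tfrac12\sfd^2(z,x)+\sfd^3(z,x),
\]
and the crude bound $\sfd(x,y_z)\le R+\sfd(x,y_0)$ (valid since $y_z\in\overline{B}_R(y_0)$) yields $|D^+\varphi|(x)\le R+\sfd(x,y_0)$ after dividing by $\sfd(z,x)$ and passing to the limit; the symmetric argument with a near-minimiser at $z$ controls $|D^-\varphi|(x)$. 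The main obstacle here is precisely this choice of tolerance: a naive fixed-$\eta$ near-minimiser contributes an error $\eta/\sfd(z,x)\to\infty$, so a $z$-dependent tolerance vanishing faster than $\sfd(z,x)$ is essential. Local Lipschitz continuity of $\varphi$ on bounded sets is automatic from the formula $\varphi=\inf_{y\in\supp\nu}(\tfrac12\sfd^2(\cdot,y)-\psi(y))$ and the uniform Lipschitz estimate for $\tfrac12\sfd^2(\cdot,y)$ with $y$ in the bounded set $\supp\nu$.
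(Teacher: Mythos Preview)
Your proof is correct and follows essentially the same route as the paper (duality, $c$-concavification, restriction of $\psi$ to $\supp\nu$, infimum representation). Two small remarks on implementation. First, your direct derivation of the quadratic bound on $|\varphi|$ from the infimum formula is slightly cleaner than the paper's: the paper first embeds $(X,\sfd)$ isometrically into a geodesic space, normalises so that $\varphi(y_0)=0$, and then integrates the linear bound on $|D\varphi|$ along geodesics to recover the quadratic growth of $|\varphi|$; your argument avoids this detour entirely. Second, your worry about the tolerance in the slope estimate is unnecessary: for fixed $z\neq x$ and any $\eta>0$, choosing a near-minimiser $y_\eta\in\supp\nu$ at $x$ gives
\[
\varphi(z)-\varphi(x)\le (R+\sfd(x,y_0))\,\sfd(z,x)+\tfrac12\sfd^2(z,x)+\eta,
\]
since the bound $\sfd(x,y_\eta)\le R+\sfd(x,y_0)$ is uniform in $\eta$; hence one may let $\eta\to 0$ \emph{before} dividing by $\sfd(z,x)$ and sending $z\to x$. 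No $z$-dependent tolerance is needed, and the paper accordingly treats this step as ``immediate''.
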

\begin{proof} Since any complete and separable metric space can be isometrically embedded in a complete, separable and
geodesic metric space we can assume with no loss of generality that the space $(X,\sfd)$ is geodesic.
The existence part is well known, so let us discuss briefly \eqref{eq:itforza}, the choice of gauge and
the regularity properties of $\varphi$ when $\nu$ has bounded support. From \eqref{eq:itforza1} and
the inequality $\varphi+\varphi^c\leq\sfd^2/2$ we get
$$
\varphi(z)-\varphi(x)\leq\frac{1}{2}\bigl(\sfd^2(z,y)-\sfd^2(x,y)\bigr) \qquad\text{for all $z$}
$$ 
for $\ggamma$-a.e. $(x,y)$, so that $|D^+\varphi|(x)\leq\sfd(x,y)$ for $\gamma$-a.e. $(x,y)$.

Now, let us set
$$
\tilde\psi(x):=\begin{cases}
\psi(x) &\text{if $x\in\supp\nu$};\\
-\infty &\text{otherwise,}
\end{cases}
$$ 
and $\tilde{\varphi}:=(\tilde\psi)^c$.
Since $\tilde\varphi\geq\varphi$, it is obvious that its negative part is
$\mu$-integrable and that $(\tilde\varphi,\tilde\psi)$ is a maximizing pair, so that $\tilde\varphi$ is a Kantorovich 
potential. From
$$
\tilde\varphi(x)=\inf_{y\in\supp\nu}\frac 12\sfd^2(x,y)-\tilde\psi(y)
$$
and the inclusion $\supp\nu\subset B_R(y_0)$ it is immediate to obtain the linear growth of $|D\tilde\varphi|$, in the form stated in \eqref{GrowthKant}.
Finally, possibly adding and subtracting the same constant to the potentials in the maximizing pair, we can assume
that $\tilde\varphi(y_0)=0$. Then, the inequality $\tilde\psi\leq \frac 12\sfd^2(y_0,\cdot)$ gives
$\tilde\psi\leq R^2/2$ on $\supp\nu$. The linear growth of $|D\tilde\varphi|$ gives the quadratic growth
of $|\varphi|$, since $(X,\sfd)$ is geodesic. \end{proof}

In the proof of the next lemma we use De Giorgi's $\Gamma$-convergence. Strictly speaking, we use $\Gamma^-$-convergence, 
the one designed for convergence of minimum problems. We recall the definition and the basic facts, referring to
Dal Maso's book \cite{DalMaso} for a full account of this theory. If $(Y,d)$ is a metric space
and $f_h:Y\to [-\infty,+\infty]$, $f:Y\to [-\infty,+\infty]$ are lower semicontinuous, we say that $(f_h)$ $\Gamma$-converges
to $f$ and write $f=\Gamma-\lim_h f_h$ if:
\begin{itemize}
\item[(a)] for any sequence $(y_h)\subset Y$ convergent to $y\in Y$, one has $\liminf_h f_h(y_h)\geq f(y)$;
\item[(b)] for all $y\in Y$ there exists $(y_h)\subset Y$ convergent to $y$ and satisfying $\limsup_h f_h(y_h)\leq f(y)$.
\end{itemize}
It is immediate to check that $\Gamma$-convergence is invariant by additive constant perturbations. In addition, (a)
yields that $f\mapsto\inf_A f$ is lower semicontinuous w.r.t. $\Gamma$-convergence for any open set $A\subset Y$, while
(b) yields that $f\mapsto\min_K f$ is upper semicontinuous w.r.t. $\Gamma$-convergence for any compact set $K\subset Y$.
If $Y$ is compact we can choose $A=K=Y$ to obtain 
\begin{equation}\label{eq:vuoleilreferee}
\Gamma-\lim_{h\to\infty}f_h=f\qquad\Longrightarrow\qquad
\lim_{h\to\infty} \min_Y f_h=\min_Y f.
\end{equation}
We need one more property of $\Gamma$-convergence: if $Y$ is separable, then any sequence of lower semicontinuous
maps $f_h:Y\to [-\infty,+\infty]$ admits a $\Gamma$-convergent subsequence $f_{h(k)}$. To see this, let $\mathscr U$ be a
countable basis of open sets of $Y$ and extract with a diagonal argument a subsequence $h(k)$ such that $\inf_U f_{k(k)}$ has a limit in
$[-\infty,+\infty]$ for all $U\in\mathscr U$. Then, the function
$$
f(y):=\sup_{U\ni y,\,U\in{\mathscr U}}\lim_{k\to\infty}\inf_U f_{h(k)}\qquad y\in Y
$$
provides the $\Gamma$-limit of $f_{h(k)}$.

\begin{lemma}[Compactness of Kantorovich potentials]\label{lem:GammaConvKant}
Consider probability densities $\sigma,\,\eta=f\mm,\,\eta_n=f_n\mm\in\probt{X}$ satisfying the following conditions:
\begin{itemize}
\item[(a)] $\sigma$ has compact support;
\item[(b)] $f_n \to f$ $\mm$-a.e. in $X$ and 
$\sup_n f_n (x) (1+\sfd^2(x, x_0)) \in L^1(X,\mm)$ for some $x_0 \in X$.
\end{itemize}
Suppose there exist $C>0$ and Kantorovich potentials $\varphi_n=\psi_n^c$ relative to $(\eta_n,\sigma)$ in the sense of Definition~\ref{def:Kant}, 
satisfying
\begin{equation}\label{eq:Assphin}
|\varphi_n(x)|\leq C (1+\sfd^2(x, x_0)) \quad\forall x\in X
\end{equation}
and 
\begin{equation}\label{eq:Asspsin}
\psi_n\equiv -\infty \quad \text{on}\; X \setminus \supp \sigma\quad\text{and}\quad \psi_n (x)\leq C\,\,\forall x\in X. 
\end{equation}
Then there exist a subsequence $n(k)$ and a Kantorovich potential $\varphi=\psi^c$ of the transportation problem relative to 
$(\eta,\sigma)$ such that $\varphi_{n(k)} \to \varphi$ pointwise. In addition \eqref{eq:Assphin} is fulfilled by $\varphi$ and $\psi\leq C$.
\end{lemma}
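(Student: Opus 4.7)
The plan is to combine the $\Gamma$-convergence compactness on the compact set $K:=\supp\sigma$ with a Fatou-type estimate on the uniformly-bounded-above sequence $\psi_n$. The four main moves are: (i) reduce to u.s.c.\ $\psi_n|_K$ without changing $\varphi_n$; (ii) extract a $\Gamma$-convergent subsequence of $-\psi_n|_K$ producing a limit $\psi$ with $\psi\leq C$ on $K$; (iii) obtain pointwise convergence $\varphi_{n(k)}\to\varphi:=\psi^c$ via \eqref{eq:vuoleilreferee}; (iv) verify the maximizing identity for $(\varphi,\psi)$ via reverse Fatou.

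For (i), I use the elementary identity $\inf_K f=\inf_K f_*$, where $f_*$ denotes the lower semicontinuous envelope. Since the l.s.c.\ envelope of $y\mapsto\sfd^2(x,y)/2-\psi_n(y)$ on $K$ is $y\mapsto\sfd^2(x,y)/2-\bar\psi_n(y)$, where $\bar\psi_n$ is the u.s.c.\ envelope of $\psi_n|_K$, one obtains $\psi_n^c=\bar\psi_n^c$, and the bound $\bar\psi_n\leq C$ on $K$ is preserved. Thus I may assume each $\psi_n|_K$ is u.s.c.\ with $\psi_n\leq C$. For (ii), $\{-\psi_n|_K\}$ is then a sequence of l.s.c.\ functions uniformly bounded below by $-C$ on the compact separable space $K$; the diagonal extraction recalled just before the statement delivers a subsequence $n(k)$ with $-\psi_{n(k)}|_K\stackrel{\Gamma}{\to}g$ for some l.s.c.\ $g$. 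Setting $\psi:=-g$ on $K$ and $\psi\equiv-\infty$ outside, the bound $\psi\leq C$ on $K$ follows from property (b) of $\Gamma$-convergence: a recovery sequence $y_k\to y$ satisfies $\liminf\psi_{n(k)}(y_k)\geq\psi(y)$, and since $\psi_{n(k)}(y_k)\leq C$ I conclude $\psi(y)\leq C$.

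For (iii), I fix $x\in X$ and note that adding the continuous function $\sfd^2(x,\cdot)/2$ preserves $\Gamma$-convergence on $K$; hence \eqref{eq:vuoleilreferee} gives $\min_K[\sfd^2(x,\cdot)/2-\psi_{n(k)}]\to\min_K[\sfd^2(x,\cdot)/2-\psi]$. Since $\psi_{n(k)}$ and $\psi$ are $-\infty$ outside $K$, these minima coincide with $\varphi_{n(k)}(x)$ and $\varphi(x)$ respectively, proving $\varphi_{n(k)}\to\varphi$ pointwise; the growth bound \eqref{eq:Assphin} then passes to the limit.

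For (iv), admissibility $\varphi+\psi\leq\sfd^2/2$ is built into the $c$-transform. Assumption (b) together with dominated convergence (domination $\sup_nf_n(1+\sfd^2(\cdot,x_0))\in L^1$) yields $W_2(\eta_n,\sigma)\to W_2(\eta,\sigma)$ and, using \eqref{eq:Assphin}, $\int\varphi_{n(k)}\,d\eta_{n(k)}\to\int\varphi\,d\eta$. From the duality identity $\tfrac12 W_2^2(\eta_{n(k)},\sigma)=\int\varphi_{n(k)}\,d\eta_{n(k)}+\int\psi_{n(k)}\,d\sigma$ I infer $\int\psi_{n(k)}\,d\sigma\to\tfrac12 W_2^2(\eta,\sigma)-\int\varphi\,d\eta$. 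On the other hand, Fatou's lemma applied to the nonnegative sequence $C-\psi_{n(k)}$ combined with $\limsup\psi_{n(k)}(y)\leq\psi(y)$ (property (a) of $\Gamma$-convergence with constant sequence $y_k\equiv y$) yields $\limsup\int\psi_{n(k)}\,d\sigma\leq\int\psi\,d\sigma$, hence $\tfrac12 W_2^2(\eta,\sigma)\leq\int\varphi\,d\eta+\int\psi\,d\sigma$; admissibility forces equality, so $(\varphi,\psi)$ is a maximizing pair. The most delicate step is the reduction (i), whose key observation is that $c$-transforms only see the l.s.c.\ envelope of the cost-minus-potential; afterwards the whole argument is a clean marriage of $\Gamma$-convergence (which controls $\psi$ and $\varphi_{n(k)}\to\varphi$) with reverse Fatou (which controls the integral $\int\psi_{n(k)}\,d\sigma$).
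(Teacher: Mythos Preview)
Your argument is correct and follows essentially the same route as the paper's proof: extract a $\Gamma$-convergent subsequence of $-\psi_n$, deduce pointwise convergence of $\varphi_{n(k)}$ from \eqref{eq:vuoleilreferee}, and close via reverse Fatou on $\int\psi_{n(k)}\,\d\sigma$ combined with convergence of $\int\varphi_{n(k)}\,\d\eta_{n(k)}$. Two cosmetic differences are worth flagging. First, your reduction (i) to u.s.c.\ $\psi_n|_K$ is a valid way to fit the $\Gamma$-compactness statement exactly as formulated in the paper (which is stated for l.s.c.\ sequences); the paper simply invokes $\Gamma$-compactness directly, implicitly relying on the fact that the diagonal extraction works without any semicontinuity hypothesis. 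Second, in step (iv) you establish the full convergence $W_2(\eta_{n(k)},\sigma)\to W_2(\eta,\sigma)$ (which indeed follows from (b) via dominated convergence on both $f_n$ and $\sfd^2(\cdot,x_0)f_n$), whereas the paper uses only the weaker $\liminf$ inequality coming from weak convergence; both suffice to reach \eqref{claim:phiKant}.
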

\begin{proof} 
Since $X$ is separable, by the compactness properties of $\Gamma$-convergence 
we can assume with no loss of generality that $-\psi_n$ $\Gamma$-converges as $n\to\infty$, and we shall denote by $-\psi$ its
$\Gamma$-limit.  Observe that, since by definition of $\Gamma$-convergence for every $x \in X$ there exists a sequence $x_n\to x$ such that 
$-\psi_n(x_n)\to -\psi(x)$, $\psi$ still satisfies \eqref{eq:Asspsin}. 

By the invariance of $\Gamma$-convergence under continuous additive perturbations we get 
\begin{equation}
\left(\frac{1}{2} \sfd^2(x,\cdot)-\psi\right)= \Gamma-\lim_{n\to \infty} \left(\frac{1}{2} \sfd^2(x,\cdot)-\psi_n\right) \qquad \forall x \in X.
\end{equation} 
Because of \eqref{eq:Asspsin} and of the compactness of  $\supp\sigma$, we can use \eqref{eq:vuoleilreferee} to get
\begin{equation}\label{def:varphi}
\varphi_n(x)=\min_X \left(\frac{1}{2} \sfd^2(x,\cdot)- \psi_n\right) \to \min_X \left(\frac{1}{2}\sfd^2(x,\cdot)-\psi \right)=\varphi(x),
\end{equation}
where the last equality has to be understood as the definition of $\varphi(x)$. Obviously \eqref{eq:Assphin} is fulfilled
by $\varphi$, so that $\varphi\in L^1(X,f\mm)$. In connection with $\psi$, obviously its positive part is $\sigma$-integrable.

Now we claim that $\varphi=\psi^c$ is a Kantorovich potential for the limit transportation problem $(f\mm,\sigma)$; we have to prove that 
\begin{equation}\label{claim:phiKant}
\int_X \varphi\, \d (f\mm)+\int_X \psi \,\d\sigma\geq \frac 1 2 W^2_2 (f \mm, \sigma),
\end{equation} 
since this inequality provides at the same time also integrability of the negative part of $\psi$.
Since by assumption $\varphi_n=\psi_n^c$ is a Kantorovich potential for $(f_n \mm, \sigma)$, we already know that 
\begin{equation}\label{eq:phinKant}
\int_X \varphi_n\, \d (f_n\mm)+\int_X \psi_n \,\d\sigma = \frac{1}{2} W^2_2 (f_n\mm, \sigma).
\end{equation} 
Using (b) it is immediate to check the weak convergence of $f_n \mm$ to $f\mm$, so that 
(see for instance Proposition~2.5 in \cite{Ambrosio-Gigli11}) 
\begin{equation}\label{eq:LSCW22}
W^2_2(f\mm,\sigma) \leq \liminf_n W^2_2(f_n\mm,\sigma).
\end{equation}
Moreover, using (b) and \eqref{eq:Assphin}, the dominated convergence theorem gives
\begin{equation}\label{eq:convIntphin}
\int_X \varphi_n\, \d (f_n\mm) \to \int_X \varphi \, \d (f\mm).
\end{equation}
Finally, by the very definition of $\Gamma$-limit we have 
\begin{equation}\nonumber\label{eq:PsiGammaL}
-\psi(x)=\inf\left\{\liminf_{n\to\infty}-\psi_n(x_n)|\,x_n \to x \right\}\leq \liminf_{n\to\infty} -\psi_n(x).
\end{equation}
Moreover, by assumption \eqref{eq:Asspsin}, $-\psi_n\geq -C$. Hence Fatou's lemma gives
\begin{equation}\label{eq:convIntphicn}
\limsup_{n\to \infty} \int_X \psi_n \,\d\sigma\leq \int_X \psi\, \d \sigma.
\end{equation}
Putting together \eqref{eq:phinKant}, \eqref{eq:LSCW22}, \eqref{eq:convIntphin} and \eqref{eq:convIntphicn} we get  \eqref{claim:phiKant} as desired. 
\end{proof}

Let us close this section by discussing the geodesic structure of $(\probt{X},W_2)$, see
\cite[Theorem~2.10]{Ambrosio-Gigli11} or \cite{Lisini07}.
If $\mu_0, \,\mu_1\in\probt X$ are connected by a constant speed geodesic $\mu_t$ in $(\probt X, W_2)$,
then there exists $\ppi \in \prob{\geo(X)}$ with $(\e_t)_\sharp\ppi = \mu_t$ for all $t\in [0,1]$
and
\[
 W_2^2(\mu_s,\mu_t) = \int_{\geo(X)}\sfd^2(\gamma_s,\gamma_t)\,\d\ppi(\gamma)=
 (s-t)^2\int_{\geo(X)}\ell^2(\gamma)\,\d\ppi(\gamma)\qquad\forall s,\,t\in [0,1],
\]
where $\ell(\gamma)=\sfd(\gamma_0,\gamma_1)$ is the length of the geodesic $\gamma$.
The collection of all the measures $\ppi$ with the above properties is denoted by
$\gopt(\mu_0,\mu_1)$.
The measure $\ppi$ is not uniquely determined by $\mu_t$, unless $(X,\sfd)$ is non-branching. 
The relation between optimal geodesic plans and optimal Kantorovich plans is given by the fact that
$\gamma:=(\e_0,\e_1)_\sharp\ppi$ is optimal whenever $\ppi\in\gopt(\mu_0,\mu_1)$.

\subsection{Gradient flows}\label{ssevi}

In this section we review the notions of gradient flows in the metric sense, in the
$EVI_K$ sense and in the classical sense provided, in Hilbert spaces, by the theory of
monotone operators.

Let $(Y,\sfd_Y)$ be a complete and separable metric space and $K\in\R$. 
We say that $E:Y\to\R\cup\{+\infty\}$ is
$K$-geodesically convex if for any $y_0,\,y_1\in D(E)$ there exists
$\gamma\in\geo(Y)$ satisfying $\gamma_0=y_0$, $\gamma_1=y_1$ and
\[
E(\gamma_t)\leq (1-t)E(y_0)+tE(y_1)-\frac K2t(1-t)
\sfd_Y^2(y_0,y_1)\qquad\text{for every } t\in[0,1].
\]
%
%
%
\begin{definition}[Metric  formulation of gradient flow]\label{def:dissKconv}
Let $E:Y\to\R\cup\{+\infty\}$ be a $K$-geodesically convex and
l.s.c. functional. We say that a locally absolutely continuous curve
$[0,\infty)\ni t\mapsto y_t\in D(E)$ is a gradient flow of $E$
starting from $y_0\in D(E)$ if
\begin{equation}\label{eq:ede}
E(y_0)= E(y_t)+\int_0^t\frac12 |\dot y_r|^2+\frac12|D^- E|^2(y_r)\,\d r\qquad\forall t\geq 0.
\end{equation}
\end{definition}
%


Next we recall a stronger formulation of gradient flows, introduced and
extensively studied in \cite{Ambrosio-Gigli-Savare08},
\cite{Daneri-Savare08}.
\begin{definition}[Gradient flows in the $EVI_K$ sense]\label{def:EVIK}
Let $E:Y\to\R\cup\{+\infty\}$ be a lower semicontinuous functional,
$K\in\R$ and $(0,\infty)\ni t\mapsto y_t\in D(E)$ be a locally
absolutely continuous curve. We say that $(y_t)$ is a $K$-gradient
flow for $E$ in the Evolution Variational Inequalities sense (or,
simply, it is an $EVI_K$ gradient flow) if for any $z\in Y$ we have
\begin{equation}
\label{eq:defevi} \frac \d{\d t}\frac{\sfd_Y^2(y_t,z)}2+\frac
K2\sfd_Y^2(y_t,z)+E(y_t)\leq E(z)\qquad\text{for a.e.~$t\in
(0,\infty)$.}
\end{equation}
If $\lim\limits_{t\downarrow 0}y_t=y_0 \in \overline {D(E)}$, we say
that the gradient flow starts from $y_0$.
\end{definition}
Notice that the derivative in \eqref{eq:defevi} exists for
a.e.~$t>0$, since $t\mapsto\sfd_Y(y_t,z)$ is locally absolutely
continuous in $(0,\infty)$.

We recall some basic and useful properties of gradient flows in the
$EVI_K$ sense, see  Proposition~2.22 in \cite{Ambrosio-Gigli-Savare11b}; 
we also refer to \cite[Chap.\ 4]{Ambrosio-Gigli-Savare08} for more results. In particular, we
emphasize that the maps $\sfS_t:y_0\mapsto y_t$ that at every $y_0$
associate the value at time $t\ge0$ of the unique $K$-gradient flow
starting from $y_0$ give raise to a continuous semigroup of
$K$-contractions according to \eqref{eq:21} in a closed (possibly
empty) subset of $Y$.
\begin{proposition}[Properties of gradient flows in the $EVI_K$ sense]\label{prop:evipropr}
Let $Y$, $E$, $K$, $y_t$ be as in Definition~\ref{def:EVIK}
 and suppose that $(y_t)$ is an $EVI_K$ gradient flow of $E$ starting from
$y_0$. Then:
\begin{itemize}
\item[(i)] If $y_0\in D(E)$, then $y_t$ is also a metric gradient flow,
i.e.~\eqref{eq:ede} holds.
\item[(ii)] If $(\tilde y_t)$ is another $EVI_K$ gradient flow for $E$ starting
from $\tilde{y}_0$, then
\begin{equation}
\sfd_Y(y_t,\tilde y_t)\leq e^{-Kt}\sfd_Y(y_0,\tilde y_0).\label{eq:21}
\end{equation}
In particular, $EVI_K$ gradient flows uniquely depend on the
initial condition.
\item[(iii)]
Existence of $EVI_K$ gradient flows starting from any point in
$D\subset Y$ implies existence starting from any point in $\overline
D$.
\end{itemize}
\end{proposition}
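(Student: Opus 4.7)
The plan is to establish the three claims in order, by a judicious choice of the test point $z$ in the defining inequality \eqref{eq:defevi}.

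For (i), I would extract the Energy Dissipation Equality \eqref{eq:ede} from \eqref{eq:defevi} by probing it with $z=y_s$ and with $z$ approaching $y_t$. First, fix $0\leq s<t$ and apply \eqref{eq:defevi} to the flow at time $r$ with the fixed test point $z=y_s$: integrating from $s$ to $t$ and using that $|\tfrac{\d}{\d r}\tfrac12\sfd_Y^2(y_r,y_s)|\leq |\dot y_r|\sfd_Y(y_r,y_s)$ together with the Young inequality $ab\leq\tfrac12 a^2+\tfrac12 b^2$ produces an upper bound for $E(y_s)-E(y_t)$ in terms of $\tfrac12\int_s^t|\dot y_r|^2\,\d r+\tfrac12\int_s^t|D^-E|^2(y_r)\,\d r$, once one shows the matching lower bound $|D^-E|(y_t)\leq|\dot y_t|$ in the sense of slopes. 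For this lower bound one uses the EVI again: for any $z\neq y_t$, \eqref{eq:defevi} together with $\tfrac{\d}{\d t}\tfrac12\sfd_Y^2(y_t,z)\geq -|\dot y_t|\sfd_Y(y_t,z)$ gives $E(y_t)-E(z)\leq|\dot y_t|\sfd_Y(y_t,z)+\tfrac{|K|}{2}\sfd_Y^2(y_t,z)$, and taking $z\to y_t$ along a recovery sequence for $|D^-E|(y_t)$ closes \eqref{eq:ede} as an equality (the reverse inequality is the standard chain-rule bound valid for any absolutely continuous curve).

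For (ii), the strategy is to bound $\tfrac{\d}{\d t}\sfd_Y^2(y_t,\tilde y_t)$ along two solutions via a doubling-of-variables argument. Applying \eqref{eq:defevi} to $(y_s)$ with the frozen test point $z=\tilde y_t$, and symmetrically to $(\tilde y_s)$ with $z=y_t$, then summing on the diagonal $s=t$, formally yields
\begin{equation*}
\frac{\d}{\d t}\sfd_Y^2(y_t,\tilde y_t)+K\sfd_Y^2(y_t,\tilde y_t)\leq 0.
\end{equation*}
Since $t\mapsto\sfd_Y^2(y_t,\tilde y_t)$ is locally absolutely continuous (being bounded above by $2\sfd_Y^2(y_t,z_0)+2\sfd_Y^2(\tilde y_t,z_0)$ for some fixed $z_0$, each term being locally absolutely continuous in $t$), Gronwall gives \eqref{eq:21}. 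The nontrivial point is justifying the diagonal differentiation: I would invoke the standard doubling trick, writing $\sfd_Y^2(y_t,\tilde y_t)$ as the restriction to the diagonal of $(s,t)\mapsto\sfd_Y^2(y_s,\tilde y_t)$ and combining partial a.e.\ differentiability in each variable to get absolute continuity plus the desired differential inequality in the distributional sense. Uniqueness follows at once from \eqref{eq:21}.

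For (iii), I would use the contraction \eqref{eq:21} to extend the flow by continuity. Given $y_0\in\overline D$, pick $y_0^n\in D$ with $y_0^n\to y_0$ and let $y_t^n$ be the corresponding $EVI_K$ gradient flows. The contraction gives $\sfd_Y(y_t^n,y_t^m)\leq\rme^{-Kt}\sfd_Y(y_0^n,y_0^m)$, so $(y_t^n)$ is Cauchy in $Y$ uniformly on every bounded time interval; denote by $y_t$ its limit. Passing to the limit in \eqref{eq:defevi} written for $y_t^n$, using lower semicontinuity of $E$ and continuity of $\sfd_Y(\cdot,z)$, yields \eqref{eq:defevi} for the limit curve. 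A small additional point is passing to the limit in the time derivative, which is handled by testing against nonnegative test functions and exchanging limit and integration.

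The main technical obstacle is item (ii), specifically the rigorous passage from the partial EVI inequalities (which give information about $s\mapsto\sfd_Y^2(y_s,z)$ for each fixed $z$) to a differential inequality along the diagonal $s=t$; everything else is a bookkeeping argument leveraging lower semicontinuity and the contraction.
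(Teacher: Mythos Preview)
The paper does not prove this proposition; it is quoted from \cite[Proposition~2.22]{Ambrosio-Gigli-Savare11b} and \cite[Chapter~4]{Ambrosio-Gigli-Savare08}. Your arguments for (ii) and (iii) match the standard ones given there: the doubling-of-variables contraction proof for (ii) (with the diagonal differentiation correctly flagged as the delicate point), and the Cauchy-completion argument via contraction for (iii).

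Your sketch for (i) has the right ingredients but the mechanics are off. Testing \eqref{eq:defevi} with $z=y_s$ and integrating does \emph{not} produce a bound for $E(y_s)-E(y_t)$ involving $|D^-E|^2(y_r)$; the slope simply does not appear in that computation. What one actually gets, after dividing the inequality $\tfrac12\sfd_Y^2(y_t,y_s)\le\int_s^t[E(y_s)-E(y_r)]\,\d r+O((t-s)^3)$ by $(t-s)^2$ and letting $t\downarrow s$, is the sharper pointwise inequality $|\dot y_s|^2\le-\tfrac{\d}{\d s}E(y_s)$ (note: no factor $\tfrac12$). Combining this with $|D^-E|(y_t)\le|\dot y_t|$ (your second step, correct in spirit, though the ``a.e.\ $t$'' in \eqref{eq:defevi} depends on $z$, so one must pass through a countable dense set or the integrated form of EVI) gives $-\tfrac{\d}{\d t}E(y_t)\ge|\dot y_t|^2\ge\tfrac12|\dot y_t|^2+\tfrac12|D^-E|^2(y_t)$, and integration yields the $\ge$ half of \eqref{eq:ede}. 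The $\le$ half is indeed the chain-rule inequality, but this requires $|D^-E|$ to be a \emph{strong} upper gradient of $E$; that is not automatic for an arbitrary lsc functional and follows here only because EVI forces $K$-geodesic convexity of $E$, after which \cite[Corollary~2.4.10]{Ambrosio-Gigli-Savare08} applies. You should make that dependence explicit.
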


If $(Y,\sfd_Y)$ is a Hilbert space with distance induced by the scalar product,
the gradient flow of a lower semicontinuous functional $E:Y\to \R\cup\{+\infty\}$
can also be defined as a locally absolutely continuous
map $y_t:(0,\infty)\to H$ satisfying
\begin{equation}\label{eq:defheat}
\frac{\d}{\d t}y_t\in -\partial^- E(y_t)\,\,\,\text{for a.e. $t>0$},
\qquad\lim_{t\downarrow 0}y_t=y\,\,\text{in $H$,}
\end{equation}
where the Frechet subdifferential $\partial^-E(y)$ is defined by
\begin{equation}\label{eq:subdiff}
\partial^- E(y):=\left\{\xi\in H:\ \liminf_{y'\to y}\frac{E(y')-E(y)-\langle\xi,y'-y\rangle}{\sfd_Y(y',y)}\geq 0\right\}.
\end{equation}
Under a $K$-convexity assumption the subdifferential can be equivalently defined
\begin{equation}\label{eq:subdiff1}
\partial^- E(y):=\left\{\xi\in H: E(y')\geq E(y)+\langle \xi,y'-y\rangle+\frac{K}{2}\sfd_Y^2(y',y)\,\,\,\text{for all $y'\in H$}\right\}.
\end{equation}
Differentiating the squared distance in \eqref{eq:defevi} yields that the $EVI_K$ formulation and \eqref{eq:defheat}
are equivalent in the Hilbert setting, for $K$-convex functionals.

\section{Weak gradients and weighted Cheeger energies}\label{sec:Cheeger}

In this section we recall the main results of the theory of weak gradients
as developed by the first two authors with Savar\'e in \cite{Ambrosio-Gigli-Savare11}, emphasizing the connections with the points of view 
developed by Cheeger in \cite{Cheeger00}, Koskela-MacManus in \cite{Koskela-MacManus} and 
Shanmugalingam in \cite{Shanmugalingam00}. We prove in Theorem~\ref{thm:change} the equivalence of weak gradients
defined with reference measures $\nn$ and $\mm$, under suitable assumptions on the density of $\nn$ w.r.t. $\mm$.
We introduce in \eqref{def:Cheeger} the weighted
Cheeger energy ${\C}_\nn$ and show in Theorem~\ref{thm:weighted} that, under the assumptions of Theorem~\ref{thm:change}, 
${\C}_\nn$ is quadratic whenever $\C$ is quadratic.

In the next two definitions we consider test plans and ``Sobolev" functions with respect to a reference nonnegative Borel measure
$\nn$ in $X$, finite on bounded sets. In the sequel we shall denote by $\Goodmeasures$ this class of measures, including both
probability measures and our reference measure $\mm$.

\begin{definition}[Test plan]
We say that $\ppi\in\Probabilities{C([0,1];X)}$ is a 2-test plan relative to $\nn\in\Goodmeasures$ if:
\begin{itemize}
\item[(i)] $\ppi$ is concentrated on $AC^2([0,1];X)$ and the $2$-action of $\ppi$ is finite:
$$
{\cal A}_2(\ppi):=\int\int_0^1|\dot\gamma_t|^2\,\d t\,\d\ppi(\gamma)<\infty.
$$ 
\item[(ii)] There exists $C\geq 0$ such that $(\e_t)_\sharp\ppi\leq C\nn$ for all $t\in [0,1]$.
\end{itemize}
\end{definition}

The following definition is inspired by the Heinonen-Koskela's concept \cite{Heinonen-Koskela98} of upper gradient, that we now illustrate.
A Borel function $G:X\to [0,\infty]$ is an upper gradient of a Borel function $f:X\to\R$ if 
$$
|f(\gamma_b)-f(\gamma_a)|\leq \int_a^bG(\gamma_s)|\dot\gamma_s|\,\d s
$$
for any absolutely continuous curve $\gamma:[a,b]\to X$. Since the inequality is invariant under reparameterization one can also
reduce to curves defined in $[0,1]$.

Let $\Curvesnonpara{X}$ be the set of continuous parametric curves $C\subset X$ with finite length, where curves
equivalent under reparameterization are identified. Recall that any such curve $C$ can be written
as $\gamma([0,\ell])$, where $\ell$ is the length of $C$ and $\gamma:[0,\ell]\to X$ is Lipschitz
with $|\dot\gamma|=1$ a.e. in $[0,\ell]$. We shall denote by
$i:AC^2([0,1];X)\to\Curvesnonpara{X}$ the natural surjection.

Recall also that the the $2$-modulus of $\Gamma\subset\Curvesnonpara{X}$ is defined by
\begin{equation}\label{eq:Mod2}
{\rm Mod}_{2,\nn}(\Gamma):=\inf\left\{\int_Xg^2\,\d\nn:\
\text{$g:X\to [0,\infty]$ Borel, $\int_\gamma g\geq 1$ for all $\gamma\in\Gamma$}\right\}.
\end{equation}
Shanmugalingam proved in \cite{Shanmugalingam00} that functions with an upper gradient in $L^2(X,\nn)$
are absolutely continuous along ${\rm Mod}_{2,\nn}$-a.e. curve in $\Curvesnonpara{X}$.
We also recall the following simple consequence of 
\eqref{eq:Mod2}: for any ${\rm Mod}_{2,\nn}$-negligible set $\Gamma$ there exist Borel functions $r_h:X\to [0,\infty]$
satisfying $\int_X r_h^2\,\d\nn\to 0$ and $\int_\gamma r_h=\infty$ for all $\gamma\in\Gamma$. Also, the inequality
$$
{\rm Mod}_{2,\nn}\bigl(\{\gamma:\ \int_\gamma g\geq t\}\bigr)\leq \frac{1}{t}\biggl(\int_Xg^2\,\d\nn\biggr)^{1/2}\qquad t>0
$$
immediately yields that functions in $L^2(X,\mm)$ have a finite integral on $\gamma$ for ${\rm Mod}_{2,\nn}$-a.e. $\gamma$.

\begin{definition}[The space $\calW_\nn$ and weak upper gradients]\label{def:wug}
Let $f:X\to\R$, $G:X\to [0,\infty]$ be Borel functions. We say that $G$ is a $2$-weak upper gradient relative to $\nn$ of $f$ if
$$
|f(\gamma_1)-f(\gamma_0)|\leq \int_0^1G(\gamma_s)|\dot\gamma_s|\,\d s<\infty\qquad\text{for $\ppi$-a.e. $\gamma$}
$$
for all $2$-test plans $\ppi$ relative to $\nn$. \\
We write $f\in\calW_\nn$ if $f$ has a $2$-weak upper gradient in $L^2(X,\nn)$. The $2$-weak upper gradient
relative to $\nn$ with minimal $L^2(X,\nn)$ norm (the so-called minimal $2$-weak upper gradient) will be denoted by $|D f|_{w,\nn}$.
\end{definition}

\begin{remark}[Sobolev regularity along curves]\label{rem:charaweakgrad}{\rm
A consequence of $\calW_\nn$ regularity is (see Proposition~5.7 in \cite{Ambrosio-Gigli-Savare11}) the Sobolev property along curves, namely for any
$2$-test plan $\ppi$ relative to $\nn$ the function $t\mapsto f(\gamma_t)$ belongs to the Sobolev space $W^{1,1}(0,1)$ and
$$
|\frac{\d}{\d t}f(\gamma_t)|\leq\weakgrad{f}(\gamma_t)|\dot\gamma_t|\qquad\text{a.e. in $(0,1)$}
$$ 
for $\ppi$-a.e. $\gamma$. Conversely, assume that $g$ is Borel nonnegative, that for any $2$-test plan $\ppi$ the map 
$t\mapsto f(\gamma_t)$ is $W^{1,1}(0,1)$ and that
$$
|\frac{\d}{\d t}f(\gamma_t)|\leq g(\gamma_t)|\dot\gamma_t|\qquad\text{a.e. in $(0,1)$}
$$ 
for $\ppi$-a.e. $\gamma$. Then, the fundamental theorem of calculus in $W^{1,1}(0,1)$ gives that $g$ is a $2$-weak upper gradient of $f$.
}\fr\end{remark}

Because of the absolute continuity condition $(\rme_t)_\sharp\ppi\ll\nn$ imposed on test plans, it is immediate to check that
the property of being in $\calW_\nn$, as well as $|D f|_{w,\nn}$, are invariant under modifications of $f$ in $\nn$-negligible sets.
Furthermore, these concepts are easily seen to be local with respect to $\nn$ in the following sense: if $f\in\calW_\nn$ then 
$f\in\calW_{\nn'}$ for all measures $\nn'=\nn\res B$ with $B\subset X$ Borel, and $|Df|_{w,\nn'}\leq |Df|_{w,\nn}$ $\nn'$-a.e. on $B$: this is due to the
fact that test plans relative to $\nn'$ are test plans relative to $\nn$. Conversely,
\begin{equation}\label{eq:localitynn}
\text{$f\in\calW_{\nn_R}$ with $\nn_R:=\nn\res\overline{B}_R(x_0)$, $\sup_R\int_X|D f|_{w,\nn_R}^2\,\d\nn_R<\infty$}
\quad\Longrightarrow\quad f\in\calW_\nn.
\end{equation}
This is due to the fact that any curve is bounded, hence any test plan $\ppi$ relative to $\nn$ can be monotonically approximated
by test plans concentrated on curves contained in a bounded set.

Another property we shall need is the locality with respect to $f$, see \cite{AGSBaEm} for the simple proof.

\begin{proposition}[Locality]\label{prop:locality}
Let $f_1,\,f_2:X\to \R$ Borel and let $G_1,\,G_2\in L^2(X,\nn)$ be $2$-weak upper gradients of $f_1,\,f_2$ relative to 
$\nn$ respectively.
Then 
$$\tilde{G}_1:= \begin{cases}
G_1&\text{on $\{f_1\neq f_2\}$;}\\
\min\{G_1,G_2\}&\text{on $\{f_1=f_2\}$}
\end{cases}
$$
is a $2$-weak upper gradient of $f_1$. In particular, by minimality we get
\begin{equation}\label{eq:locality}
|D f_1|_{w,\nn}=|D f_2|_{w,\nn}\qquad\text{$\nn$-a.e. on $\{f_1=f_2\}$.}
\end{equation}
\end{proposition}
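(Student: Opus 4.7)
The plan is to verify that $\tilde G_1$ is a $2$-weak upper gradient of $f_1$ through the curvewise $W^{1,1}$ characterization of Remark~\ref{rem:charaweakgrad}. Note first that $\tilde G_1$ is Borel, because $\{f_1 = f_2\}$ is Borel and $G_1, G_2$ are Borel. Fix a $2$-test plan $\ppi$ relative to $\nn$; by Remark~\ref{rem:charaweakgrad} applied to both $f_1$ and $f_2$, for $\ppi$-a.e.~$\gamma$ the compositions $u := f_1 \circ \gamma$ and $v := f_2 \circ \gamma$ lie in $W^{1,1}(0,1)$ with $|u'(t)| \leq G_1(\gamma_t)|\dot\gamma_t|$ and $|v'(t)| \leq G_2(\gamma_t)|\dot\gamma_t|$ a.e.~in $(0,1)$.

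The central analytic tool is the classical Stampacchia-type identity: if $w \in W^{1,1}(0,1)$ vanishes on a measurable set $E \subset (0,1)$, then $w' = 0$ a.e.~on $E$. Apply this to $w := u - v \in W^{1,1}(0,1)$ and to the Borel set $E_\gamma := \gamma^{-1}(\{f_1 = f_2\})$, obtaining $u'(t) = v'(t)$ for a.e.~$t \in E_\gamma$. Consequently, on $E_\gamma$ both bounds above apply to $|u'(t)|$, so
$$
|u'(t)| \leq \min\{G_1,G_2\}(\gamma_t)\,|\dot\gamma_t| = \tilde G_1(\gamma_t)\,|\dot\gamma_t| \qquad \text{a.e.~on } E_\gamma.
$$
On $(0,1) \setminus E_\gamma$ we have $\gamma_t \in \{f_1 \neq f_2\}$, where $\tilde G_1 = G_1$, so the bound $|u'(t)| \leq \tilde G_1(\gamma_t)|\dot\gamma_t|$ is inherited from the one for $G_1$. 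The converse part of Remark~\ref{rem:charaweakgrad} then delivers that $\tilde G_1$ is a $2$-weak upper gradient of $f_1$.

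For the ``in particular'' clause, take $G_i := |Df_i|_{w,\nn}$ for $i=1,2$ in the construction. Minimality of $|Df_1|_{w,\nn}$ gives $|Df_1|_{w,\nn} \leq \tilde G_1$ $\nn$-a.e., and on $\{f_1 = f_2\}$ the right-hand side equals $\min\{|Df_1|_{w,\nn}, |Df_2|_{w,\nn}\} \leq |Df_2|_{w,\nn}$. Exchanging the roles of $f_1$ and $f_2$ yields the reverse inequality, hence the stated equality~\eqref{eq:locality}.

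The only genuinely nontrivial step I expect is the Stampacchia identity for $u - v$ on $E_\gamma$; everything else is routine bookkeeping. Since this identity is standard for $W^{1,1}$ functions on a one-dimensional interval (via approximation by Lipschitz functions, or via the coarea formula, or as an immediate consequence of the Lebesgue differentiation theorem applied to the distributional derivative), no new ingredient beyond Remark~\ref{rem:charaweakgrad} is required.
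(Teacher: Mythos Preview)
Your argument is correct. The paper does not actually supply its own proof of this proposition; it refers the reader to \cite{AGSBaEm} ``for the simple proof''. Your route via Remark~\ref{rem:charaweakgrad} and the one-dimensional Stampacchia identity (if $w\in W^{1,1}(0,1)$ vanishes on a measurable set $E$, then $w'=0$ a.e.\ on $E$) is exactly the standard one, and all the bookkeeping steps---Borel measurability of $\tilde G_1$, the curvewise bound on $|u'|$, and the symmetric minimality argument for~\eqref{eq:locality}---are handled correctly.
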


Weak gradients share with classical gradients many features, in particular the chain rule
\cite[Proposition~5.14]{Ambrosio-Gigli-Savare11}
\begin{equation}\label{eq:chainrule}
|D \phi(f)|_{w,\nn}=\phi'(f)|D f|_{w,\nn}\qquad\text{$\nn$-a.e. in $X$}
\end{equation}
for all $\phi:\R\to\R$ Lipschitz and nondecreasing on an interval containing the image of $f$. 
By convention, as in the classical chain rule, $\phi'(f)$ is arbitrarily defined at all points
$x$ such that $\phi$ is not differentiable at $x$, taking into account the fact that
$|D f|_{w,\nn}=0$ $\nn$-a.e. on this set of points. 

In the sequel we shall adopt the conventions
\begin{equation}\label{eq:convention}
\weakgrad{f}:=|D f|_{w,\mm},\qquad\qquad\calW:=\calW_\mm.
\end{equation}
In Theorem~\ref{thm:equivalence} below we analyze in detail, the behaviour of $|D f|_{w,\nn}$ and $\calW_\nn$ 
under modifications of the reference measure $\nn$. 

\begin{theorem}\label{thm:equivalence}
The following properties hold:
\begin{itemize}
\item[(a)] If $\nn\in\Goodmeasures$ and $\Gamma\subset\Curvesnonpara{X}$ is ${\rm Mod}_{2,\nn}$-negligible, 
then any Borel set $\tilde\Gamma\subset AC^2([0,1];X)$ such that $i(\tilde\Gamma)\subset\Gamma$
is $\ppi$-negligible for any $2$-test plan $\ppi$ relative to $\nn$. In addition, for any Borel and $\nn$-negligible set
$N\subset X$ the following holds:
$$
{\rm Mod}_{2,\snn}\bigl(\bigl\{\gamma\in\Curvesnonpara{X}:\ \int_{\gamma^{-1}(N)}|\dot\gamma|\,\d t>0\bigr\}\bigr)=0.
$$
\item[(b)] If either $\nn\in\Probabilities{X}$ and $f\in\calW_\nn$, or $\nn\in\Goodmeasures$ and $f\in\calW_\nn\cap L^1(X,\nn)$, 
there exist $\phi_n\in {\rm Lip}_b(X)\cap L^2(X,\nn)$ satisfying $\phi_n\to f$ $\nn$-a.e. in $X$ and $|D\phi_n|\to |D f|_{w,\nn}$ in $L^2(X,\nn)$. 
\item[(c)] If either $\nn\in\Probabilities{X}$ and $f\in\calW_\nn$, or $\nn\in\Goodmeasures$ and $f\in\calW_\nn\cap L^1(X,\nn)$,
then there exists a Borel function $\tilde f$ coinciding with $f$ out of an $\nn$-negligible set and having an upper gradient in $L^2(X,\nn)$; 
in addition, there exist upper gradients $G_n$ of $\tilde{f}$ converging to $|D f|_{w,\nn}$ in $L^2(X,\nn)$.
\end{itemize}
\end{theorem}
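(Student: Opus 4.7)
The plan is to prove (a) first---it provides the bridge between the Mod-calculus of Koskela--MacManus--Shanmugalingam and the test-plan calculus of \cite{Ambrosio-Gigli-Savare11}---and then to deduce (b) and (c) from the identification of $\calW_\nn$ with the finiteness domain of the Cheeger energy $\C_\nn$ established in \cite[Theorem~6.2]{Ambrosio-Gigli-Savare11}.

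For (a), I appeal to the dual description of Mod-nullity recalled right after \eqref{eq:Mod2}: given a $\mathrm{Mod}_{2,\nn}$-negligible $\Gamma$ there exist Borel $r_h\colon X\to[0,\infty]$ with $\|r_h\|_{L^2(X,\nn)}\to 0$ and $\int_\gamma r_h=+\infty$ for every $\gamma\in\Gamma$. For any $2$-test plan $\ppi$ with $(\e_t)_\sharp\ppi\leq C\nn$, Cauchy--Schwarz and Fubini give
$$
\int\!\!\int_0^1 r_h(\gamma_t)|\dot\gamma_t|\,\d t\,\d\ppi(\gamma)\leq \sqrt{C\,\mathcal A_2(\ppi)}\,\|r_h\|_{L^2(X,\nn)}\longrightarrow 0,
$$
so $\int_\gamma r_h<\infty$ for $\ppi$-a.e.\ $\gamma$; in particular $\ppi(\tilde\Gamma)=0$ whenever $i(\tilde\Gamma)\subset\Gamma$. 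For the second assertion it suffices to test \eqref{eq:Mod2} with $g=+\infty\cdot\chi_N$: under the convention $0\cdot\infty=0$ one has $\int_X g^2\,\d\nn=0$, while $\int_\gamma g=+\infty$ as soon as $\int_{\gamma^{-1}(N)}|\dot\gamma|\,\d t>0$.

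For (b), the cited Theorem~6.2 characterises $\C_\nn$ as the $L^2(X,\nn)$-relaxation of $\tfrac12\int|Df|^2\,\d\nn$ on ${\rm Lip}_b(X)$ and gives the integral representation $\C_\nn(f)=\tfrac12\int_X|Df|_{w,\nn}^2\,\d\nn$ on $\calW_\nn$. By definition of the relaxation there exists a recovery sequence $\phi_n\in{\rm Lip}_b(X)$ with $\phi_n\to f$ in $L^2(X,\nn)$ and $\limsup_n\int|D\phi_n|^2\,\d\nn\leq\int|Df|_{w,\nn}^2\,\d\nn$. Combining this with the matching $\liminf$ inequality, weak $L^2$-compactness of the slopes, the minimality of $|Df|_{w,\nn}$ among weak upper gradients, and the fact that weak convergence together with convergence of norms implies strong convergence in $L^2$, one upgrades this to $|D\phi_n|\to|Df|_{w,\nn}$ strongly in $L^2(X,\nn)$, and up to a subsequence also $\phi_n\to f$ $\nn$-a.e. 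When $\nn$ is only $\sigma$-finite and $f\in L^1(X,\nn)\cap\calW_\nn$ is not in $L^2(X,\nn)$, I first apply this to the truncations $(-M)\vee f\wedge M$ restricted to $\overline B_R(x_0)$---a reduction made legitimate by \eqref{eq:localitynn} and the chain rule \eqref{eq:chainrule}---and then diagonalise as $M,R\to\infty$.

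For (c), the natural candidates are $G_n:=|D\phi_n|$ from (b), which are classical upper gradients of $\phi_n$ converging to $|Df|_{w,\nn}$ in $L^2(X,\nn)$. Passing to a fast subsequence with $\sum_n\|G_{n+1}-G_n\|_{L^2(X,\nn)}<\infty$ and combining both halves of (a)---the second half applied to the $\nn$-null set $\{\phi_n\not\to f\}$, the first to the telescoped upper-gradient inequalities for $\phi_{n+1}-\phi_n$---one shows that $\phi_n\circ\gamma$ is uniformly Cauchy along $\mathrm{Mod}_{2,\nn}$-a.e.\ parametric curve. Its pointwise limit defines a Borel representative $\tilde f$ of $f$ for which $|Df|_{w,\nn}$ is an upper gradient along $\mathrm{Mod}_{2,\nn}$-a.e.\ curve. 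To upgrade this to an inequality valid on \emph{every} absolutely continuous curve, I add to each $G_n$ a Borel $h_n\geq 0$ with $\|h_n\|_{L^2(X,\nn)}\to 0$ and $\int_\gamma h_n=+\infty$ on the $n$-th exceptional Mod-null family, so that $G_n+h_n$ is a genuine upper gradient of $\tilde f$ and still converges to $|Df|_{w,\nn}$ in $L^2(X,\nn)$. The main obstacle will be precisely this last bookkeeping step, à la Shanmugalingam \cite{Shanmugalingam00}, which converts $L^2$ and $\nn$-a.e.\ information into pointwise Cauchy behaviour along individual curves; (a) is designed exactly to make this translation possible.
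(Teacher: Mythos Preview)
Your argument for (a) and the overall strategy for (b) and (c) match the paper's proof: Hölder/Fubini for the first half of (a), the test function $g=\infty\cdot\chi_N$ for the second, the reduction to $L^2$ via truncation plus \cite[Theorem~6.2]{Ambrosio-Gigli-Savare11} for (b), and the Koskela--MacManus--Shanmugalingam stability argument for (c). Two remarks.

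In (b) the ball restriction is unnecessary. Once you reduce to $f\ge 0$ by the chain rule, the truncation $f_N=\min\{f,N\}$ already lies in $L^2(X,\nn)$ in both cases of the statement: if $\nn\in\Probabilities X$ because $f_N$ is bounded, and if $\nn\in\Goodmeasures$ with $f\in L^1(X,\nn)$ because $f_N^2\le Nf$. So you can invoke Theorem~6.2 directly for $f_N$ and diagonalise over $N$ only; there is no need to pass through $\nn\res\overline B_R(x_0)$ and \eqref{eq:localitynn}.

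In (c) there is a slip in the final bookkeeping. You correctly arrive at a Borel representative $\tilde f$ for which (a Borel version of) $|Df|_{w,\nn}$ is an upper gradient along ${\rm Mod}_{2,\nn}$-a.e.\ curve. But then you propose to obtain genuine upper gradients of $\tilde f$ as $G_n+h_n$ with $G_n=|D\phi_n|$. This does not work: $|D\phi_n|$ is an upper gradient of $\phi_n$, not of $\tilde f$, and adding a function that blows up on a Mod-null family does not cure that mismatch on the remaining curves. The correct construction (and the one the paper uses) is to add the corrections to $|Df|_{w,\nn}$ itself: take a single Mod-null family $\Gamma$ where the upper-gradient inequality for $\tilde f$ with $|Df|_{w,\nn}$ fails, choose $r_h\ge 0$ with $\|r_h\|_{L^2(X,\nn)}\to 0$ and $\int_\gamma r_h=\infty$ on $\Gamma$, and set $G_h:=|Df|_{w,\nn}+r_h$. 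These are genuine upper gradients of $\tilde f$ converging to $|Df|_{w,\nn}$ in $L^2(X,\nn)$.
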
 
\begin{proof}
(a) The first statement is a simple consequence of H\"older inequality, see \cite[Remark~5.3]{Ambrosio-Gigli-Savare11}. The second one
follows just by taking the function $g$ identically equal to $\infty$ on $N$ and null out of $N$ in \eqref{eq:Mod2}.

(b) Using the chain rule \eqref{eq:chainrule} we reduce the proof to the case of nonnegative functions $f$. If 
$f$ belong to $L^2(X,\nn)$ the existence of $\phi_n$ is one of the main results of \cite{Ambrosio-Gigli-Savare11}, see Theorem~6.2 therein. 
In the general case we approximate $f$ by the truncated functions $f_N=\min\{f,N\}$
and use the chain rule again to show $|D f_N|_{w,\nn}\to |D f|_{w,\nn}$ in $L^2(X,\nn)$. 
Then, a diagonal argument provides the result.

(c) This is part of the theory developed by Koskela-MacManus in \cite{Koskela-MacManus} and 
Shanmugalingam in \cite{Shanmugalingam00}: if $f_n\to f$ $\nn$-a.e. and $G_n$ are upper gradients of
$f_n$ weakly convergent to $G$ in $L^2(X,\nn)$, then we can find a Borel function $\tilde{f}$ equal to $f$ $\nn$-a.e.
and a Borel function $\tilde G$ equal to $G$ $\nn$-a.e. such that  $\tilde{G}$ satisfies the upper gradient property relative to
$\tilde{f}$ along  ${\rm Mod}_{2,\nn}$-almost every curve. In our case when $f\in\calW_\nn$ we may apply statement (b) 
with $G=|D f|_{w,\nn}$ and choose $f_n=\phi_n$ to find $\tilde{f}$ and $\tilde{G}$. Then, denoting by $\Gamma$ the set of curves where
the upper gradient property fails and considering
$$
G_h:=\tilde{G}+r_h,
$$
where $r_h\in L^2(X,\nn)$ satisfy $\int_Xr_h^2\,\d\nn\to 0$ and $\int_\gamma r_\epsilon=\infty$ for all $\gamma\in\Gamma$,
we obtain upper gradients $G_h$ of $\tilde{f}$ approximating $|D f|_{w,\nn}$ in $L^2(X,\nn)$. 
\end{proof}

\begin{theorem}[Change of reference measure]\label{thm:change}
Assume that $\rho=g\mm\in\probt{X}$ with $g\in L^\infty(X,\mm)$ and $\weakgrad{\sqrt{g}}\in L^2(X,\mm)$.
Then:
\begin{itemize}
\item[(a)] $f\in\calW$ and $\weakgrad{f}\in
L^2(X,\rho)$ imply $f\in\calW_\rho$ and $|D
f|_{w,\rho}=\weakgrad{f}$ $\rho$-a.e. in $X$;
\item[(b)] $\log g\in\calW_\rho$ and $|D\log g|_{w,\rho}=\weakgrad g/g$ $\rho$-a.e. in $X$.    
\end{itemize}
\end{theorem}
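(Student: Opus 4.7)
I would reduce (b) to (a) by chain rules. Applying (a) to $\sqrt g$ (which lies in $\calW$ by assumption and has $\weakgrad{\sqrt g}\in L^2(X,\rho)$ because $\int\weakgrad{\sqrt g}^2\,\d\rho\le\|g\|_{L^\infty}\int\weakgrad{\sqrt g}^2\,\d\mm$) gives $\sqrt g\in\calW_\rho$ with $|D\sqrt g|_{w,\rho}=\weakgrad{\sqrt g}$ $\rho$-a.e. The $\mm$-chain rule \eqref{eq:chainrule} with $\phi(t)=t^2$ (Lipschitz on $[0,\|g\|_{L^\infty}^{1/2}]$, which contains the range of $\sqrt g$) then yields $\weakgrad g=2\sqrt g\,\weakgrad{\sqrt g}$ $\mm$-a.e.; the $\rho$-chain rule with $\phi(t)=2\log t$, applied as $\phi_\beta(t):=2\log(t\vee\beta)$ and letting $\beta\downarrow 0$ (permitted since $\sqrt g>0$ $\rho$-a.e.), yields $|D\log g|_{w,\rho}=2\weakgrad{\sqrt g}/\sqrt g=\weakgrad g/g$ $\rho$-a.e., which is (b).

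\textbf{Easy half of (a).} Since $g\le\|g\|_{L^\infty}$, every 2-test plan $\ppi$ relative to $\rho$ satisfies $(\e_t)_\sharp\ppi\le C\|g\|_{L^\infty}\mm$ and hence is also a 2-test plan relative to $\mm$. Therefore $\weakgrad f$, which lies in $L^2(X,\rho)$ by hypothesis, is a 2-weak upper gradient of $f$ relative to $\rho$, and $|Df|_{w,\rho}\le\weakgrad f$ $\rho$-a.e.\ by minimality.

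\textbf{Hard half of (a).} For the reverse inequality $\weakgrad f\le|Df|_{w,\rho}$ $\rho$-a.e., my plan is to construct Lipschitz approximations of $f\mathbf 1_{\{g>0\}}$ in $L^2(X,\mm_R)$ (with $\mm_R:=\mm\res\overline{B}_R(x_0)$) whose slopes are controlled in $L^2(X,\mm_R)$ by $|Df|_{w,\rho}$, then invoke Theorem~\ref{thm:equivalence}(b) in the $\mm$-direction together with \eqref{eq:localitynn} and the locality Proposition~\ref{prop:locality}. By Theorem~\ref{thm:equivalence}(b) applied to $\nn=\rho$ (after chain-rule truncation to handle unbounded $f$) I get bounded Lipschitz $\phi_n\to f$ $\rho$-a.e.\ with $|D\phi_n|\to|Df|_{w,\rho}$ in $L^2(X,\rho)$. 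Applied to $\sqrt g\in\calW$, the same theorem yields Lipschitz $\eta_m\to\sqrt g$ $\mm$-a.e.\ with $|D\eta_m|\to\weakgrad{\sqrt g}$ in $L^2(X,\mm)$, from which I manufacture Lipschitz cutoffs $\chi_{m,\delta}$ which vanish where $\eta_m\le 0$, approach $\mathbf 1_{\{g>0\}}$ pointwise, and whose gradient mass concentrates on $\{0<\eta_m<\delta\}$. The Leibniz inequality
\[
|D(\phi_n\chi_{m,\delta})|\le\chi_{m,\delta}|D\phi_n|+|\phi_n||D\chi_{m,\delta}|
\]
then delivers, along a careful diagonal choice $\psi_k:=\phi_{n(k)}\chi_{m(k),\delta(k)}$, a Lipschitz sequence with $\psi_k\to f\mathbf 1_{\{g>0\}}$ in $L^2(X,\mm_R)$ and $\limsup_k\int|D\psi_k|^2\,\d\mm_R\le\int|Df|_{w,\rho}^2\,\d\rho$. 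By the relaxation characterisation of $\weakgrad{\cdot}$ embedded in Theorem~\ref{thm:equivalence}(b), this gives $\weakgrad{f\mathbf 1_{\{g>0\}}}\le|Df|_{w,\rho}$ $\mm$-a.e.\ on $\{g>0\}$, which equals $\weakgrad f$ there by Proposition~\ref{prop:locality}, and sending $R\uparrow\infty$ via \eqref{eq:localitynn} completes the argument.

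\textbf{Main obstacle.} The delicate point is engineering $\chi_{m,\delta}$ so that the cross term $\||\phi_n||D\chi_{m,\delta}|\|_{L^2(\mm_R)}$ is driven to zero without destroying the convergence $\phi_n\chi_{m,\delta}\to f\mathbf 1_{\{g>0\}}$: a naive linear ramp $\chi=(\eta_m/\delta)\wedge 1$ produces a factor $\delta^{-2}$ that requires $\int_{\{0<\sqrt g<\delta\}}\weakgrad{\sqrt g}^2\,\d\mm$ to decay faster than $\delta^2$, which is not automatic. The resolution rests on the full strength of the hypothesis $\weakgrad{\sqrt g}\in L^2(X,\mm)$: together with $\sqrt g\in L^2(X,\mm)$ (which follows from $\int g\,\d\mm=1$), it forces the set $\{g=0\}$ to have zero $2$-capacity in $X$, producing Lipschitz cutoffs of $\{g>0\}$ with arbitrarily small $W^{1,2}$-norm, which is exactly what is needed to absorb the cross term in the iterated limit $m\to\infty$, $\delta\downarrow 0$, $n\to\infty$.
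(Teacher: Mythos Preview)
Your easy half of (a) and your reduction of (b) to (a) are fine (and your easy half is actually cleaner than the paper's: you observe directly that every $2$-test plan for $\rho$ is a $2$-test plan for $\mm$, whereas the paper goes through Lipschitz approximation).

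The hard half of (a), however, has a genuine gap. Your key claim --- that $\sqrt g\in L^2(X,\mm)$ together with $\weakgrad{\sqrt g}\in L^2(X,\mm)$ forces $\{g=0\}$ to have zero $2$-capacity --- is simply false. Take $X=\R^n$ with Lebesgue measure and $g$ any smooth compactly supported probability density: then $\sqrt g\in W^{1,2}(\R^n)$, yet $\{g=0\}$ contains the exterior of a ball and has infinite capacity. Without the capacity claim you have no mechanism to kill the cross term $\|\,|\phi_n|\,|D\chi_{m,\delta}|\,\|_{L^2(\mm_R)}$: as you yourself note, the linear ramp produces a $\delta^{-2}$, and there is in general no better cutoff, because $\mathbf 1_{\{g>0\}}$ is not in $\calW$ with zero gradient. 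The approximation program therefore cannot be completed.

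The paper avoids this obstacle by a completely different, curve-wise, argument. The point is that $\weakgrad{\sqrt g}\in L^2(X,\mm)$ does \emph{not} control the capacity of $\{g=0\}$ globally, but it does guarantee (via Theorem~\ref{thm:equivalence}) that $\sqrt g$ has a representative which is continuous along ${\rm Mod}_{2,\mm}$-a.e.\ curve. Given an upper gradient $H\in L^2(X,\mm)$ of (a good representative of) $f$ and an upper gradient $L\in L^2(X,\rho)$ of (a good $\rho$-representative of) $f$, the paper sets
\[
L_1:=\begin{cases} H & \text{on }\{g=0\},\\ \min\{H,L\} & \text{on }\{g>0\},\end{cases}
\]
and shows that $L_1$ is a $2$-weak upper gradient of $f$ relative to $\mm$. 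The verification is along a ${\rm Mod}_{2,\mm}$-generic curve $\gamma$: continuity of $g\circ\gamma$ makes $\{t:g(\gamma_t)>0\}$ open, on each component one uses the $\rho$-upper gradient inequality for $L$ (legitimate because any curve with $\inf_\gamma g>0$ and $\int_\gamma L<\infty$ lies in a ${\rm Mod}_{2,\mm}$-small set whenever it lies in a ${\rm Mod}_{2,\rho}$-small set), and one patches with $H$ on the closed set where $g\circ\gamma=0$. Letting $L\to|Df|_{w,\rho}$ in $L^2(X,\rho)$ then yields $\weakgrad f\le|Df|_{w,\rho}$ $\rho$-a.e. The Sobolev regularity of $\sqrt g$ is used only for this curve-wise continuity, not for any global capacity statement.
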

\begin{proof}  (a) Thanks to the locality properties with respect to $\mm$ stated after Definition~\ref{def:wug} (see in particular
\eqref{eq:localitynn}) we can reduce ourselves to the case when $\mm(X)=1$. 
Since the statement is invariant under
modification of $f$ and $g$ in $\mm$-negligible sets, 
by Theorem~\ref{thm:equivalence}(b) we can assume that $\sqrt{g}$ and $f$ are absolutely continuous along
${\rm Mod}_{2,\mm}$-almost every curve in $\Curvesnonpara{X}$; even
more, we can assume that $f$ has an upper gradient
$H$ with $\int H^2\,\d\mm<\infty$.

Let us prove first the inequality $|D f|_{w,\rho}\leq\weakgrad{f}$ $\rho$-a.e. in $X$. By a truncation argument
we can assume with no loss of generality that $f$ is bounded; under this assumption we can find
bounded Lipschitz functions $\phi_n$ with $|D\phi_n|\to \weakgrad{f}$ in $L^2(X,\mm)$. Since $g$ is bounded it
follows that $|D\phi_n|\to\weakgrad{f}$ in $L^2(X,\rho)$; we can now use the stability properties of weak upper
gradients \cite[Theorem~5.12]{Ambrosio-Gigli-Savare11} to obtain that  $|D f|_{w,\rho}\leq\weakgrad{f}$ $\rho$-a.e. in $X$.

In order to prove the converse inequality $|D f|_{w,\rho}\geq\weakgrad{f}$ $\rho$-a.e. in $X$, 
we consider a function $\tilde{f}$ coinciding with $f$ $\rho$-a.e. in $X$ and an upper gradient $L$ of $\tilde{f}$ with 
$\int L^2\,\d\rho<\infty$. The converse inequality
follows by letting $L\to |Df|_{w,\rho}$ in $L^2(X,\rho)$, if we are able to show that
$$
L_1(x):=
\begin{cases}
H(x) &\text{if $g(x)=0$;}\\
\min\{H(x),L(x)\} &\text{if $g(x)>0$,}
\end{cases}
$$ 
is a $2$-weak upper gradient of $f$ relative to $\mm$. More precisely, we will prove that the upper gradient inequality with $L_1$
in the right hand side holds along ${\rm Mod}_{2,\mm}$-almost every curve. We notice first that 
$$
|\tilde{f}(\gamma_{\ell(\gamma)})-\tilde{f}(\gamma_0)|\leq\int_\gamma L
$$
along ${\rm Mod}_{2,\mm}$-a.e. curve $\gamma$ satisfying $\inf_\gamma g>0$ (here we are using
the invariance under reparameterization, selecting the
arclength one, with $\ell(\gamma)$ equal to the length of $\gamma$). Indeed, by definition of 2-modulus, the set
$$
\left\{\gamma\in\Curvesnonpara{X}:\ \inf_\gamma g>0,\,\,\int_\gamma L=\infty\right\}
$$
is not only ${\rm Mod}_{2,\rho}$-negligible, but also ${\rm Mod}_{2,\mm}$-negligible. If we write
the upper gradient inequality in averaged form 
$$
\frac{1}{\epsilon\ell(\gamma)}\int_0^{\epsilon\ell(\gamma)}
|\tilde{f}(\gamma_{\ell(\gamma)-r})-\tilde{f}(\gamma_r)|\,\d r\leq\int_\gamma L
\quad\text{with}\quad\epsilon<\frac 12
$$ 
and use Theorem~\ref{thm:equivalence}(a) with the $\mm$-negligible set $N=\{f\neq\tilde{f}\}\cap\{g>0\}$,
we may replace $\tilde{f}$ with $f$ in the previous inequality. Now we use the absolute continuity of $f$ along
${\rm Mod}_{2,\mm}$-a.e. curve and pass to the limit along a sequence $\epsilon_k\downarrow 0$ to get
$$
|f(\gamma_b)-f(\gamma_a)|\leq\int_\gamma L
$$
along ${\rm Mod}_{2,\mm}$-a.e. curve $\gamma:[a,b]\to X$ with $\inf_\gamma g>0$. 

The set of curves $\gamma\in\Curvesnonpara{X}$ containing a subcurve $\gamma':[a,b]\to X$ with $\inf_{\gamma'}g>0$ 
and $|f(\gamma_b')-f(\gamma_a')|>\int_{\gamma'} L$ is ${\rm Mod}_{2,\mm}$-negligible as well. If $\gamma$ does not
belong to this set and $f\circ\gamma$ is absolutely continuous, it is immediate to check 
(recall that $g$ is continuous along ${\rm Mod}_{2,\mm}$-almost every curve) that its derivative
is bounded a.e. by $L_1\circ\gamma|\dot\gamma|$, whence the upper gradient inequality along $\gamma$ follows.

(b) We consider the functions $f_\eps=\log (g+\eps)$. Since $\weakgrad{g}^2/g^2\in L^1(X,\rho)$ it
is immediate to check that all functions $f_\eps$ satisfy the assumption in (a), hence $f_\eps\in\calW_\rho$ and
$|D f_\eps|_{w,\rho}=|D f_\eps|_w=\weakgrad{g}/(g+\eps)$ $\rho$-a.e. in $X$. We can now pass to the limit
as $\eps\downarrow 0$ and use again the stability of weak upper gradients to get 
$|D f|_{w,\rho}\leq\weakgrad{g}/g$ $\rho$-a.e. in $X$. The converse inequality follows by the chain rule \eqref{eq:chainrule}
with $\phi(s):=\log(\rme^s+1)$:
$$
\frac{\weakgrad{g}}{g+1}=|Df_1|_{w,\rho}=\phi'(f)|Df|_{w,\rho}=\frac{g}{g+1}|Df|_{w,\rho}.
$$
\end{proof} 

\begin{remark}{\rm Notice that for the validity of (a) it suffices, as the proof shows, the existence
of a nonnegative function $\tilde{g}$ continuous along ${\rm Mod}_{2,\mm}$-a.e. curve and
satisfying $\mm(\{g\neq\tilde g\})=0$. 
}\fr\end{remark}

We shall define $\C:L^1(X,\mm)\to [0,\infty]$, ${\C}_\nn:L^1(X,\nn)\to [0,\infty]$ by
\begin{equation}\label{def:Cheeger}
\C(f):=\frac{1}{2}\int_X \weakgrad{f}^2\,\d\mm\quad f\in\calW,\qquad
{\C}_\nn(f):=\frac{1}{2}\int_X |D f|_{w,\nn}^2\,\d\nn\quad f\in\calW_\nn
\end{equation}
with the conventions $\C(f)=\infty$ on $L^1(X,\mm)\setminus\calW$, ${\C}_\nn(f)=\infty$ on
$L^1(X,\nn)\setminus\calW_\nn$. We will choose $\nn$, as explained in the introduction, to be
probability measures. 

We shall also denote, whenever $\C$ (resp. $\C_\nn$) is a quadratic form, by
\begin{equation}\label{eq:numeriamoanchequesta}
\mathcal E(f,g):=\frac{1}{2}\bigl(\C(f+g)-\C(f-g)\bigr)\qquad
\biggl(\text{resp. } \mathcal E_\nn(f,g):=\frac{1}{2}\bigl({\C}_\nn(f+g)-{\C}_\nn(f-g)\bigr)\biggr)
\end{equation}
the associated symmetric bilinear form, defined on $\calW\cap L^1(X,\mm)$ (resp. $\calW_\nn\cap L^1(X,\nn)$).

Still under the assumption that $\C$ is quadratic, as in \cite[Definition~4.13]{Ambrosio-Gigli-Savare11b} 
(see also Gigli's work \cite{Gigli12} for a more general, non-quadratic framework) we can define
\begin{equation}\label{eq:defGamma}
\Gbil{f}{g}:=\lim_{\eps\downarrow 0}\frac{\weakgrad{(f+\eps g)}^2-\weakgrad{f}^2}{2\eps}
\qquad f,\,g\in\calW,
\end{equation}
where the limit takes place in $L^1(X,\mm)$. Notice that $\Gbil{f}{f}=\weakgrad{f}^2$ $\mm$-a.e. and that
$\Gbil{\cdot}{\cdot}$ provides integral representation
to $\mathcal E$, namely 
$$
\mathcal E(f,g)=\int_X\Gbil{f}{g}\,\d\mm.
$$
The inequality $\weakgrad{(f+\eps g)}^2\leq \bigl(\weakgrad{f}+\eps\weakgrad{g}\bigr)^2=
\weakgrad{f}^2+2\eps\weakgrad{f}\weakgrad{g}+\eps^2\weakgrad{g}^2$ provides the bound
\begin{equation}\label{eq:boundGamma}
\big|\Gbil{f}{g}\bigr|\le\weakgrad{f}\weakgrad{g}\qquad\text{$\mm$-a.e. in $X$.}
\end{equation}
Also, locality of weak gradients gives
\begin{equation}\label{eq:localityGamma}
\Gbil{f}{g}=\Gbil{f}{g'}\qquad\text{$\mm$-a.e. on $\{g=g'\}$.}
\end{equation}
We will need a chain rule with respect to the
second argument, see \cite[Lemma~4.7]{Ambrosio-Gigli-Savare11b} for the simple proof:
\begin{equation}\label{eq:chainriemannian}
\int_X\Gbil{f}{\phi(g)}\,\d\mm=\int_X\phi'(g)\Gbil{f}{g}\,\,d\mm
\end{equation}
for all $\phi:\R\to\R$ nondecreasing and Lipschitz on an interval containing the image of $g$, 
with the same convention on the value of $\phi'(g)$ mentioned in \eqref{eq:chainrule}. 
Finally, we will need the following lemma, whose proof is more delicate: it relies on the chain rule for
$\Gbil{\cdot}{\cdot}$ also with respect to the first factor and on the Leibniz rule with respect to the second factor 
(see \cite{Ambrosio-Gigli-Savare11b} for finite measures and \cite[Proposition~4.20]{Gigli12} for the general case).

\begin{lemma}\label{lem:vabbeserve}
If $\C$ is quadratic, then $\Gbil{\cdot}{\cdot}$ is a symmetric bilinear form. In particular
$\int\weakgrad{f}^2 g\,\d\mm=\int\Gbil{f}{f}g\,\d\mm$ is a quadratic form for any nonnegative $g\in L^\infty(X,\mm)$.
\end{lemma}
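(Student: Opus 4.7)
The plan is to bootstrap the pointwise bilinearity and symmetry of $\Gbil{\cdot}{\cdot}$ from three already-established tools: the chain rule $\Gbil{\psi(f)}{g} = \psi'(f)\Gbil{f}{g}$ in the first factor, the chain rule/Leibniz rule in the second factor (specifically $\Gbil{f}{\phi g} = \phi \Gbil{f}{g} + g\Gbil{f}{\phi}$), and the fact that $\C$ quadratic is equivalent, by polarization, to $\mathcal E(f,g) = \int\Gbil{f}{g}\,\d\mm$ being a symmetric bilinear form on $\calW\cap L^1(X,\mm)$.

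For pointwise additivity in the second factor, I would test the integral identity
\[
\mathcal E(f,\phi(g_1+g_2)) = \mathcal E(f,\phi g_1) + \mathcal E(f,\phi g_2)
\]
against an arbitrary $\phi\in{\rm Lip}_b(X)$. Expanding both sides via the Leibniz rule and observing that the $\Gbil{f}{\phi}$-terms assemble into $\int(g_1+g_2-g_1-g_2)\Gbil{f}{\phi}\,\d\mm = 0$, one extracts $\int\phi[\Gbil{f}{g_1+g_2} - \Gbil{f}{g_1} - \Gbil{f}{g_2}]\,\d\mm = 0$ for all such $\phi$, hence $\Gbil{f}{g_1+g_2} = \Gbil{f}{g_1} + \Gbil{f}{g_2}$ $\mm$-a.e. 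For pointwise symmetry, applying the chain rule in both factors to $\mathcal E(\psi_1(f),\psi_2(g)) = \mathcal E(\psi_2(g),\psi_1(f))$ gives $\int\psi_1'(f)\psi_2'(g)[\Gbil{f}{g}-\Gbil{g}{f}]\,\d\mm = 0$. Combining this with the already-proved second-factor additivity (which, via Leibniz, supplies a richer family of admissible test functions of the form $\phi\cdot\psi_1'(f)\psi_2'(g)$) upgrades the identity to $\mm$-a.e.\ symmetry. Pointwise homogeneity $\Gbil{cf}{g} = c\Gbil{f}{g}$ is immediate from the chain rule with $\psi(s)=cs$, and together with symmetry this gives full pointwise bilinearity.

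The ``in particular'' assertion follows from these facts at once: bilinearity and symmetry imply the pointwise parallelogram identity $\Gbil{f_1+f_2}{f_1+f_2} + \Gbil{f_1-f_2}{f_1-f_2} = 2\Gbil{f_1}{f_1} + 2\Gbil{f_2}{f_2}$ $\mm$-a.e., which integrated against any nonnegative $g\in L^\infty(X,\mm)$ yields the parallelogram law for $f\mapsto\int\weakgrad{f}^2 g\,\d\mm = \int\Gbil{f}{f}g\,\d\mm$, hence it is a quadratic form. The main obstacle is the symmetry step: upgrading the integral identity on product-form test functions $\psi_1'(f)\psi_2'(g)$ to a genuinely pointwise statement, which requires that this algebra of test functions (enlarged via localization using Leibniz with an external $\phi\in{\rm Lip}_b(X)$) separates points $\mm$-a.e.\ — a measure-theoretic argument that also calls on the locality \eqref{eq:localityGamma} to handle the set where $\weakgrad{f}$ or $\weakgrad{g}$ vanishes.
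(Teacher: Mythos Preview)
The paper does not actually prove this lemma in-line: it only states that the proof ``is more delicate'' and ``relies on the chain rule for $\Gbil{\cdot}{\cdot}$ also with respect to the first factor and on the Leibniz rule with respect to the second factor'', referring to \cite{Ambrosio-Gigli-Savare11b} (finite measures) and \cite[Proposition~4.20]{Gigli12} (general case) for details. Your plan invokes precisely these two ingredients, so at the level of strategy you are aligned with what the paper indicates.

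Your argument for pointwise additivity in the second factor is clean and correct: testing $\mathcal E(f,\phi g_1+\phi g_2)=\mathcal E(f,\phi(g_1+g_2))$ and expanding via the pointwise Leibniz rule isolates $\int\phi[\Gbil{f}{g_1+g_2}-\Gbil{f}{g_1}-\Gbil{f}{g_2}]\,\d\mm=0$ for all $\phi\in{\rm Lip}_b(X)$, which suffices. Homogeneity via the chain rule $\psi(s)=cs$ is also fine.

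The symmetry step is, as you recognize, the delicate point. Your proposed route through $\int\psi_1'(f)\psi_2'(g)[\Gbil{f}{g}-\Gbil{g}{f}]\,\d\mm=0$ requires a \emph{pointwise} chain rule in the second factor (only the integral version \eqref{eq:chainriemannian} is recorded above), and then a separation argument for the family $\{\psi_1'(f)\psi_2'(g)\}$, which does not span $L^\infty$. Your idea to enlarge this family by multiplying with an external $\phi\in{\rm Lip}_b(X)$ via Leibniz is on the right track, but note that to expand $\Gbil{\phi g}{f}$ you implicitly need a Leibniz rule in the \emph{first} factor as well, which you have not assumed. In the cited references the route is somewhat different: one first establishes pointwise additivity in the first factor (using the first-factor chain rule together with Leibniz-type localization), so that $\Gbil{\cdot}{\cdot}$ is genuinely bilinear; symmetry then drops out automatically, because bilinearity forces $\weakgrad{(f+tg)}^2=\Gbil{f}{f}+t(\Gbil{f}{g}+\Gbil{g}{f})+t^2\Gbil{g}{g}$, and comparing with the definition \eqref{eq:defGamma} gives $\Gbil{f}{g}=\tfrac12(\Gbil{f}{g}+\Gbil{g}{f})$. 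You may find it cleaner to reorganize along these lines rather than attacking symmetry directly.
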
 

\begin{theorem}[Weighted Cheeger energy]\label{thm:weighted}
Assume that $\rho=g\mm\in\probt{X}$ with $g\in L^\infty(X,\mm)$ and $\C(\sqrt{g})<\infty$. If $\C$ is a quadratic
form, then $\C_\rho$ is a quadratic form and 
\begin{equation}\label{eq:transfer1}
\mathcal E_\rho(\log g,\varphi)=\mathcal E(g,\varphi)\qquad\text{
for all $\varphi:X\to\R$ Lipschitz with bounded support.}
\end{equation}
\end{theorem}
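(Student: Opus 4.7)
First I would prove the parallelogram identity on the subclass of Lipschitz bounded functions, and then extend by a standard lower-semicontinuity argument. For any Lipschitz bounded $\varphi$, $\weakgrad\varphi\in L^\infty(X,\mm)\subset L^2(X,\rho)$, so Theorem~\ref{thm:change}(a) gives $\varphi\in\calW_\rho$ with $|D\varphi|_{w,\rho}=\weakgrad\varphi$ $\rho$-a.e.~and hence
$$
\C_\rho(\varphi)=\tfrac12\int_X\weakgrad{\varphi}^2\,g\,\d\mm,
$$
which is quadratic in $\varphi$ by Lemma~\ref{lem:vabbeserve}. Given $f_1,f_2\in\calW_\rho$, Theorem~\ref{thm:equivalence}(b) with $\nn=\rho$ provides Lipschitz bounded $\phi_n^{(i)}\to f_i$ $\rho$-a.e.~with $|D\phi_n^{(i)}|\to|Df_i|_{w,\rho}$ in $L^2(\rho)$, and combining $|D\phi_n^{(i)}|_{w,\rho}\le|D\phi_n^{(i)}|$ with the lower semicontinuity of $\C_\rho$ forces $\C_\rho(\phi_n^{(i)})\to\C_\rho(f_i)$. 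Lower semicontinuity then yields $\C_\rho(f_1+f_2)+\C_\rho(f_1-f_2)\le 2\C_\rho(f_1)+2\C_\rho(f_2)$, and the reverse inequality follows from $2$-homogeneity by substituting $u=(f_1+f_2)/2$, $v=(f_1-f_2)/2$.

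\noindent\textbf{The identity.} With $\C_\rho$ quadratic, the bilinear carr\'e-du-champ $G_\rho(\cdot,\cdot)$, defined as in \eqref{eq:defGamma} with $\rho$ in place of $\mm$, is available, symmetric, $L^1(\rho)$-valued, represents $\mathcal E_\rho$, and satisfies the chain rule in both factors together with the pointwise bound $|G_\rho(f,h)|\le|Df|_{w,\rho}|Dh|_{w,\rho}$. The assumption $\C(\sqrt g)<\infty$ combined with $g\in L^\infty$ yields $\weakgrad{g}=2\sqrt g\,\weakgrad{\sqrt g}\in L^2(X,\rho)$, so $|Dg|_{w,\rho}=\weakgrad{g}$ $\rho$-a.e.~by Theorem~\ref{thm:change}(a); unwinding the limit that defines $G_\rho(g,\varphi)$ then gives $G_\rho(g,\varphi)=\Gbil{g}{\varphi}$ $\rho$-a.e. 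To handle $\log g$ I would use the truncations $h_N:=\Psi_N(g)$, with $\Psi_N(t):=\log\max(t,\rme^{-N})$ Lipschitz and nondecreasing on $[0,\infty)$: the chain rule in the $\rho$-setting gives $G_\rho(h_N,\varphi)=(1/g)\,\mathbf{1}_{\{g>\srme^{-N}\}}\,\Gbil{g}{\varphi}$ $\rho$-a.e., so that
$$
\mathcal E_\rho(h_N,\varphi)=\int_X\mathbf{1}_{\{g>\srme^{-N}\}}\,\Gbil{g}{\varphi}\,\d\mm.
$$
As $N\to\infty$, dominated convergence (with dominant $\weakgrad{g}\,\weakgrad{\varphi}\in L^1(\mm)$) and the locality identity \eqref{eq:localityGamma} (which forces $\Gbil{g}{\varphi}=0$ $\mm$-a.e.~on $\{g=0\}$) yield $\mathcal E_\rho(h_N,\varphi)\to\int_X\Gbil{g}{\varphi}\,\d\mm=\mathcal E(g,\varphi)$. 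On the other hand, $h_N=\log g$ on $\{g>\rme^{-N}\}$, so locality of weak gradients combined with Theorem~\ref{thm:change}(b) gives $|D(h_N-\log g)|_{w,\rho}\le(\weakgrad{g}/g)\,\mathbf{1}_{\{g\le\srme^{-N}\}}$, which tends to $0$ in $L^2(\rho)$ by dominated convergence (the dominant $\weakgrad{g}/g$ lies in $L^2(\rho)$ precisely because $\C(\sqrt g)<\infty$); bilinearity of $\mathcal E_\rho$ together with Cauchy--Schwarz then force $\mathcal E_\rho(h_N,\varphi)\to\mathcal E_\rho(\log g,\varphi)$, and comparing the two limits yields the claim.

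\noindent\textbf{Main obstacle.} The critical point is that $\log$ fails to be Lipschitz at $0$, so chain-rule arguments cannot be applied directly to $\log g$ and must be routed through the Lipschitz truncations $\Psi_N$ and a coordinated double-limit. The hypothesis $\C(\sqrt g)<\infty$ enters precisely by placing $\weakgrad{g}/g$ in $L^2(X,\rho)$, which enables the dominated-convergence estimates that control both $\mathcal E_\rho(h_N,\varphi)\to\mathcal E_\rho(\log g,\varphi)$ and the passage from the $\rho$-integral to the $\mm$-integral, with the locality of $\Gbil{\cdot}{\cdot}$ accounting for the set $\{g=0\}$.
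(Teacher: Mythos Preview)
Your proof is correct. The quadraticness argument is essentially the paper's: both invoke Theorem~\ref{thm:change}(a) and Lemma~\ref{lem:vabbeserve} to get the parallelogram law on bounded Lipschitz functions, then extend via the density provided by Theorem~\ref{thm:equivalence}(b) and lower semicontinuity.

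For the identity \eqref{eq:transfer1}, however, you take a genuinely different route. The paper regularizes with $f_\eps=\log(g+\eps)$, computes $\mathcal E_\rho(\varphi,f_\eps)=\int_X\Gbil{\varphi}{g}\frac{g}{g+\eps}\,\d\mm$ via Theorem~\ref{thm:change}(a) and the chain rule \eqref{eq:chainriemannian}, and then must justify $\mathcal E_\rho(\varphi,f_\eps)\to\mathcal E_\rho(\varphi,\log g)$; this is done by a weak-convergence argument in an auxiliary Hilbert space $H=L^2(X,\rho')\cap\calW_\rho$ with $\rho'=(1+\log^2 g)^{-1}\rho$, which is needed because the $f_\eps$ are not uniformly in $L^2(X,\rho)$. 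Your hard truncation $h_N=\log\max(g,\rme^{-N})$ sidesteps this: since $h_N=\log g$ \emph{exactly} on $\{g>\rme^{-N}\}$, locality gives the explicit bound $|D(h_N-\log g)|_{w,\rho}\le(\weakgrad{g}/g)\mathbf 1_{\{g\le\rme^{-N}\}}$, and dominated convergence in $L^2(X,\rho)$ (dominant $\weakgrad{g}/g$, square-integrable precisely by $\C(\sqrt g)<\infty$) yields strong convergence $\mathcal E_\rho(h_N,\varphi)\to\mathcal E_\rho(\log g,\varphi)$ directly. This is more elementary---no auxiliary weighted space is needed---at the modest cost of invoking locality of $\Gbil{\cdot}{\cdot}$ on $\{g=0\}$ to close the $\mm$-integral on the right-hand side. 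The paper's smoother regularization $\log(g+\eps)$ makes the right-hand limit trivial but pushes the work into the left-hand limit; your choice reverses this trade-off.
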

\begin{proof} By Theorem~\ref{thm:change}(a) and Lemma~\ref{lem:vabbeserve}, 
$\C_\rho$ is a quadratic form on bounded Lipschitz functions with bounded 
support. By approximation $\C_\rho$ is a quadratic form
on bounded Lipschitz functions and eventually, taking 
Theorem~\ref{thm:equivalence}(b) into account, on $L^2(X,\rho)$. \\
Let $f_\eps=\log(g+\eps)\in\calW$. Then, using again the independence of weak gradients upon the reference measure given
by Theorem~\ref{thm:change}(a) and \eqref{eq:chainriemannian}, we get
\begin{eqnarray*}
\mathcal E_\rho(\varphi,f_\eps)&=&\lim_{\delta\downarrow 0} \frac{{\C}_\rho(\varphi+\delta f_\eps)-{\C}_\rho(\varphi)}{\delta}=
\lim_{\delta\downarrow 0}\int_X\frac{\weakgrad{(\varphi+\delta f_\eps)}^2-\weakgrad{\varphi}^2}{2\delta}\,\d\rho\\&=&
\int_X\Gbil{\varphi}{f_\eps}\,\d\rho=\int_X\Gbil{\varphi}{g}\frac{g}{g+\eps}\,\d\mm.
\end{eqnarray*}
Passing to the limit as $\eps\downarrow 0$ provides the result, since convergence of the right hand sides is obvious, while
convergence of the left hand sides can be obtained working in the vector space
$H:=L^2(X,\rho')\cap\calW_\rho$ endowed with the scalar product
$$
\langle h,h'\rangle:=\int_X hh'\,\d\rho'+\mathcal E_\rho(h,h')
\quad\text{with}\quad\rho':=\frac{1}{1+\log^2 g}\rho.
$$
This is indeed a Hilbert space because $\C_\rho$ is easily seen to be lower semicontinuous (since a truncation
argument allows the reduction to sequences uniformly bounded in $L^\infty(X,\rho)$) also w.r.t. $L^2(X,\rho')$ 
convergence; moreover, clearly $f_\eps\to f$ in $L^2(X,\tilde\rho)$ and  since their norms are uniformly bounded we have
weak convergence in $H$. Finally $g\mapsto\mathcal E_\rho(\varphi,g)$ is continuous in $H$.
\end{proof}

\section{Existence of good geodesics}\label{sec:Tapio}

This section is devoted to the proof of the existence of geodesics
in $(\probt{X},W_2)$ which are (at least for some initial time 
interval) better than the ones given directly by the usual 
$CD(K,\infty)$ inequality given by Lott and Villani \cite{Lott-Villani09} and Sturm \cite{Sturm06I}.

\begin{definition}\label{def:CD}
We say that $(X,\sfd,\mm)$ is a $CD(K,\infty)$ space if, 
for all $\mu_0,\,\mu_1 \in D(\entv)$ (recall \eqref{def:DDDD}) there exists 
a geodesic $(\mu_t) \in \geo(\ProbabilitiesTwo X)$ which 
satisfies the convexity inequality
\begin{equation}\label{eq:CDdef}
 \entv(\mu_t) \le (1-t)\entv(\mu_0) + t \entv(\mu_1)
                    - \frac{K}{2}t(1-t)W_2^2(\mu_0,\mu_1)\qquad\forall t\in [0,1].
\end{equation}
\end{definition}

The idea of constructing good geodesics in $CD(K,N)$ spaces was 
recently used by Rajala in \cite{R2011b} to study $CD(K,N)$ spaces with branching
geodesics. There the initial motivation was to 
obtain geodesics good enough so that the approach of \cite{R2011} 
for proving local Poincar\'e inequalities could be adapted to these 
spaces. Constructing geodesics by selecting midpoints is a standard approach,
 see for example Gromov's proof that the GH limit of length spaces
is a length space \cite[Proposition 3.8]{Gromov07}.

Here we modify some of Rajala's results \cite{R2011b} and \cite{R2012} to 
the setting of this paper, repeating with some details the arguments 
because on some occasions the adaptation is not trivial. 
The version of these results which we will need in the later 
sections is the following.

\begin{theorem}\label{thm:goodgeodesics}
 Let $(X,\sfd,\mm)$ be a $CD(K,\infty)$ space and let 
 $\mu_0=\rho_0\mm,\,\mu_1=\rho_1\mm \in D(\entv)$. 
 Assume in addition that $\mu_1$ has
 bounded support and density and that the density $\rho_0$ satisfies the growth-bound
 \begin{equation}\label{eq:decay}
  \rho_0(x) \le c_1\rme^{-c_2\sfd^2(x,x_0)}\qquad\forall x\in X
 \end{equation}
 for some $c_1,\,c_2>0$ and $x_0 \in X$.

 Then there exist $t_0 \in (0,1)$ and a geodesic
 $(\mu_t) \in \geo(\ProbabilitiesTwo X)$ between 
 $\mu_0$, $\mu_1$ satisfying the convexity inequality
 \eqref{eq:CDdef} for all $t \in [0,1]$ and the density bound
 \begin{equation}\label{eq:uniformbound}
  \sup_{t \in [0,t_0]}||\rho_t||_{L^\infty(X,\mm)} < \infty.
 \end{equation}
\end{theorem}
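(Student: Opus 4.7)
The plan is to construct the geodesic by an iterative dyadic midpoint selection driven by entropy minimization, and then to establish the short-time $L^\infty$ bound via a splitting/rescaling argument in the style of Rajala \cite{R2011b,R2012}.

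\emph{Setup and entropy minimization at the midpoint.} The Gaussian-type decay \eqref{eq:decay} yields in particular $\rho_0\in L^\infty(X,\mm)$ together with finiteness of all Gaussian moments, so $\mu_0\in D(\entv)\cap\probt X$. Combined with the bounded support and density of $\mu_1$, this gives $\mu_1\in D(\entv)$ and $W_2(\mu_0,\mu_1)<\infty$. The $CD(K,\infty)$ condition ensures that the set $\mathcal M(\mu_0,\mu_1)$ of midpoint measures of $CD(K,\infty)$-admissible geodesics, i.e.\ those satisfying \eqref{eq:CDdef} at $t=1/2$, is nonempty. Using tightness from second-moment control and the lower semicontinuity of $\entv$, standard direct-method arguments provide a minimizer $\nu_{1/2}$ of $\entv$ over $\mathcal M(\mu_0,\mu_1)$, which in particular satisfies
\[
\ent{\nu_{1/2}}\le \tfrac12\ent{\mu_0}+\tfrac12\ent{\mu_1}-\tfrac{K}{8}W_2^2(\mu_0,\mu_1).
\]

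\emph{Density bound at the midpoint via splitting.} The core step is to show that $\nu_{1/2}=\rho_{1/2}\mm$ has density bounded by a constant $C$ depending only on $c_1$, $\|\rho_1\|_{L^\infty(X,\mm)}$, $K$ and $W_2^2(\mu_0,\mu_1)$. Assume by contradiction that $A:=\{\rho_{1/2}>M\}$ has positive $\mm$-measure for $M\gg C$. Pick $\ppi\in\gopt(\mu_0,\mu_1)$ with $(\e_{1/2})_\sharp\ppi=\nu_{1/2}$ (such a plan exists because $\nu_{1/2}$ is the midpoint of an admissible geodesic). Split $\ppi=\ppi^A+\ppi^{A^c}$ according to whether $\gamma_{1/2}\in A$, normalize each piece, and apply $CD(K,\infty)$ to the two endpoint pairs to produce new admissible midpoints $\tilde\nu^A$, $\tilde\nu^{A^c}$; reassemble $\tilde\nu_{1/2}:=\tilde\nu^A+\tilde\nu^{A^c}$. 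The entropy inequality on each piece, together with the strict convexity of $r\log r$, the $L^\infty$ bound on $\rho_1$ (which provides an upper entropy bound in terms of the restricted masses) and the spike of mass on $A$, forces $\ent{\tilde\nu_{1/2}}<\ent{\nu_{1/2}}$. As $\tilde\nu_{1/2}\in\mathcal M(\mu_0,\mu_1)$ by concatenation, this contradicts minimality. Thus $\|\rho_{1/2}\|_{L^\infty(X,\mm)}\le C$.

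\emph{Iteration and passage to the limit.} Apply the same minimization and bound recursively on each dyadic subinterval $[k/2^n,(k+1)/2^n]$ to produce measures $\{\nu_{k/2^n}\}$. At each level the density bound propagates from the endpoints of the subinterval to its midpoint via the same constant, which depends on $K$ and the $W_2$-length of the subinterval; iterating from the left endpoint $\mu_0$ (where $\|\rho_0\|_\infty\le c_1$) produces a sequence of bounds
\[
M_n:=\sup_{0\le k\le 2^n t_0}\|\rho_{k/2^n}\|_{L^\infty(X,\mm)}
\]
whose finiteness on an initial interval $[0,t_0]$ for $t_0>0$ sufficiently small follows by tracking the explicit dependence of $C$ on the endpoint bounds and on $W_2^2$ (which halves at each step). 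Extracting a $W_2$-Cauchy limit yields a geodesic $(\mu_t)_{t\in[0,1]}$; the $CD(K,\infty)$ inequality \eqref{eq:CDdef} passes to the limit by lower semicontinuity of $\entv$, and \eqref{eq:uniformbound} on $[0,t_0]$ is inherited from the uniform dyadic bounds.

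\emph{Main obstacle.} The delicate point is the splitting argument: after restricting the optimal plan to $\{\gamma_{1/2}\in A\}$ and renormalizing, one must verify that applying $CD(K,\infty)$ on each piece and recombining actually produces a midpoint belonging to $\mathcal M(\mu_0,\mu_1)$ with strictly smaller entropy. This requires careful accounting of the entropy contributions from each piece (particularly when $K<0$, where the term $-\tfrac{K}{8}W_2^2$ is positive and could spoil the gain), and hinges on bounding $W_2$ of the restricted pairs via the structure of the optimal plan rather than by a naive diameter estimate, so that the splitting-and-recombining genuinely lowers the total entropy whenever $M$ is chosen large enough.
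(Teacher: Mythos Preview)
Your overall architecture (iterated entropy minimization over intermediate measures, then a contradiction argument for the density bound) is the same as the paper's. However, your splitting step for the density bound does not work as stated, and two further ingredients that the paper uses essentially are absent from your sketch.

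\textbf{The splitting argument is incomplete.} You split $\ppi=\ppi^A+\ppi^{A^c}$, apply $CD(K,\infty)$ to each normalized piece to obtain new midpoints $\tilde\nu^A$, $\tilde\nu^{A^c}$, and reassemble. But the original restriction $(\e_{1/2})_\sharp\ppi^{A^c}/\ppi(\geo(X)\setminus\e_{1/2}^{-1}(A))$ is already an admissible intermediate point for the restricted endpoints, and might well have \emph{smaller} entropy than the $\tilde\nu^{A^c}$ furnished by the $CD$ inequality. So replacing it can raise the entropy on the $A^c$ side, and there is no mechanism in your argument guaranteeing that the gain on the $A$ side dominates. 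The paper (Proposition~\ref{prop:spreadtolimit}) avoids this by leaving almost all of $\nu_{1/2}$ untouched: it keeps $\nu_{1/2}\res(X\setminus L)$, scales $\nu_{1/2}\res L$ down by a factor $1-\phi/(C+\delta)$, and fills in the missing $\phi$-fraction with an intermediate measure of density $\le C/\nu(L)$ coming from a separate minimization (Proposition~\ref{prop:excesszero}, for the excess-mass functional). A fairly delicate computation, using the convexity inequality $x\log x-y\log y\le(x-y)(1+\log x)$ on the three regions $L$, $L'\setminus L$, $X\setminus L'$, then shows the entropy strictly drops for $\phi$ small. Your one-shot replacement does not give this.

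\textbf{The Gaussian decay is used for more than $\rho_0\in L^\infty$.} The paper needs the hypothesis of Proposition~\ref{prop:spreadtolimit}: for any restriction of mass $M$ there is an intermediate measure with entropy $\le\log(C/M)$. Verifying this is where \eqref{eq:decay} enters in full force: one estimates $\entv(\hat\mu_0)\le\log(c_1/M)-c_2\int V^2\,\d\hat\mu_0$, and the negative $-c_2 V^2$ term is precisely what absorbs the curvature contribution $\tfrac{K^-}{2}t_0(1-t_0)W_2^2(\hat\mu_0,\hat\mu_1)$ when $K<0$. This is also what dictates the choice $t_0=\min\{c_2/(2K^-),1/2\}$. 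Your sketch, which treats the decay merely as ``$\rho_0\in L^\infty$ plus moments'', cannot close this estimate for $K<0$; indeed you flag this yourself as the ``main obstacle'' without resolving it.

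\textbf{Propagation to all dyadic times requires localization by length.} To carry the bound from $\mu_0$ and $\mu_{t_0}$ to the dyadic times in $(0,t_0)$, the paper does not simply iterate a constant depending on $W_2$ as you suggest. It proves the pointwise decay \eqref{eq:decayestimate}, $\rho_t(\gamma_t)\le C\exp\bigl[-\tfrac12(1-t/t_0)(c_2-K^-tt_0)\ell^2(\gamma)\bigr]$, by first decomposing $\ppi$ into slabs $\{l\le\ell(\gamma)\le l+\epsilon\}$ using Proposition~\ref{prop:separation} and Lemma~\ref{lma:separation} (so that transport distances are essentially constant on each slab), applying Proposition~\ref{prop:spreadtolimit} on each slab, and then letting $\epsilon\downarrow 0$. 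Without this localization the curvature error terms accumulate and your claimed uniform bound $M_n$ over $[0,t_0]$ is not justified.
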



In $\S$\ref{ss41} we discuss the convexity of the entropy along intermediate measures formed using
an inductive process and prove existence of entropy minimizers. In $\S$\ref{ss42}
we review some result of Rajala  in \cite{R2012} in $CD^*(K,N)$ spaces. In $\S$\ref{ss43}
we prove that the minimizers satisfy density bounds by adapting Rajala's result in \cite{R2011b}. Finally,
in $\S$\ref{ss44} we prove Theorem~\ref{thm:goodgeodesics} using these ingredients.

\subsection{Intermediate measures and the existence of minimizers}\label{ss41}

The measures with minimal entropy will be selected from the set of all 
intermediate measures. Recall that for any two measures 
$\mu_0,\, \mu_1 \in \ProbabilitiesTwo X$ the set of all intermediate 
points (with a parameter $t \in (0,1)$), will be denoted by
\begin{align*}
 \mathcal{I}_t(\mu_0,\mu_1) = \{\nu \in \ProbabilitiesTwo X
   \,:\, W_2(\mu_0,\nu) = t W_2(\mu_0,\mu_1) \text{ and }
   W_2(\mu_1,\nu) = (1-t) W_2(\mu_0,\mu_1)\}. 
\end{align*}
It is not difficult to show that the set of $t$-intermediate points is a convex and closed subset
of $\ProbabilitiesTwo X$,

Even though the selection process is countable, it will define the whole 
geodesic by completion. To get the convexity inequality \eqref{eq:CDdef} 
for all times we will then need the lower semicontinuity of the entropy
w.r.t. $W_2$-convergence (a direct consequence of \eqref{eq:changeentropy} and of
the weak lower semicontinuity of ${\rm Ent}_\nn$ in $\prob{X}$ when $\nn\in\Probabilities{X}$)
and tightness estimates.
Let us now indicate how the first property of the good geodesics follows 
easily if we define the geodesic by taking any intermediate point where 
\eqref{eq:CDdef} is satisfied.

\begin{proposition}\label{prop:combinedgeod}
 Let $\mu_0, \,\mu_1 \in \ProbabilitiesTwo X$. Suppose that we have 
 selected inductively at step $(n+1)$ measures
 $\mu_{t} \in \mathcal{I}_{\frac{t-s}{r-s}}(\mu_s,\mu_r)$ satisfying
 \[
   \entv(\mu_{t}) \le \frac{(r-t)}{(r-s)}\entv(\mu_s)
    + \frac{(t-s)}{(r-s)} \entv(\mu_r)
    - \frac{K}{2}\frac{(t-s)}{(r-s)}\frac{(r-t)}{(r-s)}W_2^2(\mu_s,\mu_r),
 \]
 where $s < t < r$ and the times $s$ and $r$ are two consecutive
 timepoints in the set of times where the measures have already been
 selected at step $n$.
 
 Then \eqref{eq:CDdef} holds for all $\mu_t$ chosen at the $(n+1)$-th step. 
 In particular, if the closure of the selected times is the whole interval $[0,1]$, defining
 $\mu_t$ by completion, we have a geodesic between $\mu_0$ and $\mu_1$ along which
 \eqref{eq:CDdef} holds.
\end{proposition}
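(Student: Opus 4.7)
The plan is to prove the statement by induction on the step $n$, carrying along two hypotheses simultaneously:
(H1) every selected $\mu_t$ satisfies the global inequality \eqref{eq:CDdef}, and
(H2) for every pair of consecutive selected times $s < r$, the identity $W_2(\mu_s,\mu_r) = (r-s)\,W_2(\mu_0,\mu_1)$ holds.
The base case $n=0$ is trivial: \eqref{eq:CDdef} reduces to an equality at $t=0,1$, and (H2) holds by definition for the pair $(0,1)$.

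For the inductive step, fix a pair of consecutive times $s<r$ selected at step $n$ and a new $t\in(s,r)$ with $\mu_t\in\mathcal{I}_{(t-s)/(r-s)}(\mu_s,\mu_r)$. The definition of intermediate points together with (H2) applied to $s,r$ immediately gives $W_2(\mu_s,\mu_t)=(t-s)W_2(\mu_0,\mu_1)$ and $W_2(\mu_t,\mu_r)=(r-t)W_2(\mu_0,\mu_1)$, which propagates (H2) to the refined partition. For (H1), I would insert the inductive bounds
\[
\entv(\mu_s)\le(1-s)\entv(\mu_0)+s\entv(\mu_1)-\tfrac{K}{2}s(1-s)W_2^2(\mu_0,\mu_1),
\]
\[
\entv(\mu_r)\le(1-r)\entv(\mu_0)+r\entv(\mu_1)-\tfrac{K}{2}r(1-r)W_2^2(\mu_0,\mu_1),
\]
and $W_2^2(\mu_s,\mu_r)=(r-s)^2 W_2^2(\mu_0,\mu_1)$ into the hypothesized local inequality for $\mu_t$. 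The result is a linear combination of $\entv(\mu_0)$, $\entv(\mu_1)$ and $-\tfrac{K}{2}W_2^2(\mu_0,\mu_1)$; a direct algebraic check shows that the coefficients collapse to $1-t$, $t$ and $t(1-t)$ respectively. Specifically, $\tfrac{(r-t)(1-s)+(t-s)(1-r)}{r-s}=1-t$, $\tfrac{(r-t)s+(t-s)r}{r-s}=t$, and
\[
\tfrac{r-t}{r-s}\,s(1-s)+\tfrac{t-s}{r-s}\,r(1-r)+(t-s)(r-t)=t(1-t),
\]
which one verifies by multiplying through by $r-s$ and expanding; this is the main bookkeeping step and the only real computation.

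For the completion assertion, observe that (H2) combined with the triangle inequality (using $W_2(\mu_0,\mu_1)\le W_2(\mu_0,\mu_s)+W_2(\mu_s,\mu_t)+W_2(\mu_t,\mu_1)$ to get the reverse bound) shows that the restriction of $t\mapsto\mu_t$ to the selected times is $W_2(\mu_0,\mu_1)$-Lipschitz into the complete metric space $(\ProbabilitiesTwo X,W_2)$, with $W_2(\mu_s,\mu_t)=|t-s|\,W_2(\mu_0,\mu_1)$ for all selected $s,t$. Therefore it extends uniquely to a continuous curve on $[0,1]$, and the identity $W_2(\mu_s,\mu_t)=|t-s|W_2(\mu_0,\mu_1)$ persists on the closure, showing $(\mu_t)\in\geo(\ProbabilitiesTwo X)$. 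Finally, \eqref{eq:CDdef} extends to every $t\in[0,1]$ by passing to the limit along a sequence of selected times $t_k\to t$, using $W_2$-lower semicontinuity of $\entv$ recalled after \eqref{eq:changeentropy} together with continuity of $t\mapsto(1-t)\entv(\mu_0)+t\entv(\mu_1)-\tfrac{K}{2}t(1-t)W_2^2(\mu_0,\mu_1)$ in $t$ (the case in which $\entv(\mu_0)$ or $\entv(\mu_1)$ equals $+\infty$ is trivial). The main obstacle is nothing deep, just the coefficient identity above; everything else is a bookkeeping of the intermediate-point definition together with the standard $W_2$-lower semicontinuity of the entropy.
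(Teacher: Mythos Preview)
Your proof is correct and follows the same induction-plus-algebra strategy as the paper, but is in fact more carefully written: the paper's proof only spells out the special case $s=0$ (inserting $\mu_{ts}\in\mathcal I_s(\mu_0,\mu_t)$ below an already-selected $\mu_t$) and then declares that ``the claim holds for all the points $t_i$'', whereas you treat the general insertion between arbitrary consecutive $\mu_s,\mu_r$ and explicitly carry the auxiliary hypothesis (H2) on $W_2(\mu_s,\mu_r)$ needed to convert the local quadratic term into $(r-s)^2W_2^2(\mu_0,\mu_1)$. Your completion argument (Lipschitz extension plus $W_2$-lower semicontinuity of $\entv$) also makes explicit what the paper summarizes in a single sentence.
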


\begin{proof} Suppose that we have selected a measure
 $\mu_{t} \in \mathcal{I}_{t}(\mu_0,\mu_1)$ satisfying 
 \[
  \entv(\mu_t) \le (1-t)\entv(\mu_0)
   + t \entv(\mu_1) - \frac{K}{2}t(1-t)W_2^2(\mu_0,\mu_1)
 \]
 and after it a measure $\mu_{ts} \in \mathcal{I}_{s}(\mu_0,\mu_t)$
 satisfying 
 \[
  \entv(\mu_{ts}) \le (1-s)\entv(\mu_0)
   + s \entv(\mu_t) - \frac{K}{2}s(1-s)W_2^2(\mu_0,\mu_t).
 \]
 Then for the measure $\mu_{ts}$ we also have
 $\mu_{ts} \in \mathcal{I}_{ts}(\mu_0,\mu_1)$ and
 \begin{align*}
  \entv&(\mu_{ts}) \le (1-s)\entv(\mu_0) + s \entv(\mu_t) - \frac{K}{2}s(1-s)W_2^2(\mu_0,\mu_t) \\
                   \le\,& (1-s)\entv(\mu_0) + s \left((1-t)\entv(\mu_0) + t \entv(\mu_1) - \frac{K}{2}t(1-t)W_2^2(\mu_0,\mu_1) \right)\\
                      & - \frac{K}{2}s(1-s)W_2^2(\mu_0,\mu_t) \\
                   = \,& \left((1-s)+s(1-t)\right)\entv(\mu_0) + ts\entv(\mu_1) - \frac{K}{2}\left(ts(1-t)+t^2s(1-s)\right)W_2^2(\mu_0,\mu_1)\\
                   = \,& (1-ts)\entv(\mu_0) + ts\entv(\mu_1) - \frac{K}{2}ts(1-ts)W_2^2(\mu_0,\mu_1).
 \end{align*}
 Therefore the claim holds for all the points $t_i$. By the lower semicontinuity of the entropy it then holds also for the closure.
\end{proof}

Now that we know from Proposition~\ref{prop:combinedgeod} that
the first property of the geodesic in Theorem~\ref{thm:goodgeodesics}
is easily satisfied we turn to the more difficult part of obtaining
the density bound \eqref{eq:uniformbound}. To do this we will not 
only select intermediate measures that satisfy \eqref{eq:CDdef}, 
but measures where the entropy is minimal. The obvious first step is 
then to prove that there indeed exist such minimizers. In general the set 
$\mathcal{I}_t(\mu_0,\mu_1)$, though closed, is not compact in 
$(\ProbabilitiesTwo X, W_2)$. However, when we consider a subset of 
$\mathcal{I}_t(\mu_0,\mu_1)$ with the entropy bounded from above, we 
have compactness. In particular, we therefore have the existence of 
minimizers.

\begin{lemma}\label{lma:minexists}
 Let $\mu_0, \,\mu_1 \in \probt{X}$. 
 Then for all $t\in[0,1]$ there exists a minimizer of the entropy in $\mathcal{I}_t(\mu_0,\mu_1)$.
\end{lemma}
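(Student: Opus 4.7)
The plan is a direct-method argument combining an entropy bound with compactness of optimal plans. Assume first that the infimum of $\entv$ on $\mathcal{I}_t(\mu_0,\mu_1)$ is finite (otherwise any element of $\mathcal{I}_t(\mu_0,\mu_1)$ is a minimizer), and fix a minimizing sequence $(\nu_n)\subset\mathcal{I}_t(\mu_0,\mu_1)$. The triangle inequality $W_2(\nu_n,\delta_{x_0}) \leq W_2(\nu_n,\mu_0) + W_2(\mu_0,\delta_{x_0}) = tW_2(\mu_0,\mu_1) + W_2(\mu_0,\delta_{x_0})$ gives a uniform bound on the second moments $\int\sfd^2(x,x_0)\,\d\nu_n(x)$. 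Passing to the probability reference measure $\tilde\nn$ introduced in \eqref{eq:grygorian2}, the change-of-reference formula \eqref{eq:changeentropy} then gives that ${\rm Ent}_{\tilde\nn}(\nu_n)$ is bounded above. Since $\tilde\nn$ is a probability measure, hence tight by Ulam's theorem, the uniform $\tilde\nn$-integrability of $\d\nu_n/\d\tilde\nn$ coming from the entropy bound (de la Vall\'ee-Poussin) yields tightness of $(\nu_n)$. A subsequence, still denoted $(\nu_n)$, converges narrowly to some $\nu\in\probt{X}$.

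Next, narrow lower semicontinuity of $W_2$ together with the triangle inequality forces both $W_2(\mu_0,\nu)=tW_2(\mu_0,\mu_1)$ and $W_2(\nu,\mu_1)=(1-t)W_2(\mu_0,\mu_1)$, so $\nu\in\mathcal{I}_t(\mu_0,\mu_1)$. The key technical step is to upgrade narrow convergence to $W_2$ convergence, which reduces to showing $\int\sfd^2(\cdot,x_0)\,\d\nu_n\to\int\sfd^2(\cdot,x_0)\,\d\nu$. I would pick optimal Kantorovich plans $\pi_n$ from $\mu_0$ to $\nu_n$; tightness of their marginals gives tightness of $(\pi_n)$, and along a further subsequence $\pi_n$ converges narrowly to some $\pi$ with marginals $\mu_0$ and $\nu$. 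Because $\int\sfd^2(x,y)\,\d\pi_n=t^2W_2^2(\mu_0,\mu_1)=W_2^2(\mu_0,\nu)$, Fatou forces $\pi$ to be optimal and forces $\int\sfd^2(x,y)\,\d\pi_n\to\int\sfd^2(x,y)\,\d\pi$.

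The continuous function $g(x,y):=2\sfd^2(x,y)+2\sfd^2(x,x_0)$ dominates $\sfd^2(y,x_0)$ pointwise, and $\int g\,\d\pi_n\to\int g\,\d\pi$ since its two constituent integrals either converge by the previous step or are constant in $n$ (the first marginal $\mu_0$ being fixed). Hence the standard dominated-convergence principle for narrow convergence of measures gives $\int\sfd^2(y,x_0)\,\d\pi_n\to\int\sfd^2(y,x_0)\,\d\pi$, that is, the second moments of $\nu_n$ converge to that of $\nu$; together with narrow convergence this is equivalent to $W_2$ convergence $\nu_n\to\nu$. Finally, the $W_2$-lower semicontinuity of $\entv$ (noted at the beginning of $\S\ref{ss41}$ as a consequence of \eqref{eq:changeentropy} and the narrow lsc of ${\rm Ent}_{\tilde\nn}$ on $\Probabilities{X}$) yields $\entv(\nu)\leq\liminf_n\entv(\nu_n)$, so $\nu$ attains the infimum.

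The main obstacle I anticipate is the upgrade from narrow to $W_2$ convergence: in the $\sigma$-finite setting, $\entv$ is not narrowly lower semicontinuous on moment-bounded sets because the $-c\int V^2\,\d\mu$ correction in \eqref{eq:changeentropy} is only narrowly upper semicontinuous. The rigidity of the intermediate-point constraint---forcing $W_2(\mu_0,\nu_n)$ to be \emph{exactly} $tW_2(\mu_0,\mu_1)$ rather than just bounded---is precisely what produces the Fatou equality on the optimal plans, and hence the convergence of second moments that bridges narrow and $W_2$ convergence.
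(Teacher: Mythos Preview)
Your argument is correct and follows the same direct-method skeleton as the paper, but the two proofs diverge at the step where narrow compactness is upgraded to $W_2$-compactness. The paper observes that uniform $2$-integrability holds for \emph{all} of $\mathcal I_t(\mu_0,\mu_1)$ via the one-line geodesic estimate
\[
\int_{X\setminus\overline B(x_0,k)}\sfd^2(x_0,\cdot)\,\d\mu
\;\le\;4\int_{X\setminus\overline B(x_0,k/2)}\sfd^2(x_0,\cdot)\,\d(\mu_0+\mu_1),
\]
which, together with tightness (obtained exactly as you do, via \eqref{eq:changeentropy} and the entropy bound with respect to $\tilde\mm$), yields $W_2$-relative compactness directly by \cite[Proposition~7.1.5]{Ambrosio-Gigli-Savare08}. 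You instead extract a narrow limit and then exploit the rigidity of the constraint $W_2(\mu_0,\nu_n)=tW_2(\mu_0,\mu_1)$: this equality forces convergence (not just lower semicontinuity) of $\int\sfd^2\,\d\pi_n$ along optimal plans, which you leverage through the domination $\sfd^2(y,x_0)\le 2\sfd^2(x,y)+2\sfd^2(x,x_0)$ to recover convergence of second moments. Your route is longer but self-contained and makes explicit why the midpoint constraint (an equality, not a bound) is what rescues the argument in the $\sigma$-finite setting; the paper's route is shorter because the uniform $2$-integrability of $\mathcal I_t$ is available for free from the geodesic representation of intermediate points.
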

\begin{proof}
 Without loss of generality we can assume the existence of 
 $\nu \in \mathcal{I}_t(\mu_0,\mu_1)$ with $\entv(\nu)<\infty$.
 We know that the entropy is lower 
 semicontinuous and that
 $\mathcal{I}_t(\mu_0,\mu_1)$ is closed. The claim then follows
 if we are able to show that the set
 \[
  \mathcal{K} = \{\mu \in \mathcal{I}_t(\mu_0,\mu_1)\,:\,
  \entv(\mu) \le \entv(\nu)\} \subset \ProbabilitiesTwo X
 \]
 is relatively compact in $(\ProbabilitiesTwo X, W_2)$. It suffices
 to prove that the set $\mathcal{K}$ is uniformly $2$-integrable
 and tight, see \cite[Proposition 7.15]{Ambrosio-Gigli-Savare08}.
 Let us first prove the uniform $2$-integrability of the set 
 $\mathcal{I}_t(\mu_0,\mu_1)$. This follows from the fact that for
 any $\mu \in \mathcal{I}_t(\mu_0,\mu_1)$ we have
 \[
  \int_{X \setminus \overline{B}(x_0,k)} \sfd^2(x_0,x)\,\d\mu
         \le \int_{X \setminus \overline{B}(x_0,k/2)} 4\sfd^2(x_0,x)\,\d(\mu_0+\mu_1)
         \to 0, \quad \text{as }k \to \infty
 \]
 since $\mu_0,\,\mu_1 \in \ProbabilitiesTwo X$.
 
 Let us next prove that $\mathcal{K}$ is tight. If $\tilde\mm\in\Probabilities{X}$ is defined as in \eqref{eq:grygorian2},
 \eqref{eq:changeentropy} shows that $\sup_{\mu\in\mathcal K}{\rm Ent}_{\tilde\mm}(\mu)$ is finite. Then,
 tightness of $\mathcal K$ is a simple consequence of the equi-integrability of the densities w.r.t. $\tilde\mm$. 
 %

\end{proof}

As a technical tool we will need the excess mass functional
$\mathcal{F}_C \colon \ProbabilitiesTwo X \to [0,1]$ which is defined 
for all thresholds $C \ge 0$ as
\begin{equation}\label{eq:excessmass}
 \mathcal{F}_C(\mu) = \|(\rho-C)^+\|_{L^1(X,\mm)} + \mu^s(X),
\end{equation}
where $\mu = \rho \mm + \mu^s$ with $\mu^s \perp \mm$. This functional,
lower semicontinuous under weak convergence, 
was used in \cite{R2011b} to obtain the first good geodesics in 
$CD(K,N)$ spaces. The motivation for using the excess mass functional 
is that its variations under perturbation of the minimizer are easier to estimate,
since one only cares about the amount of mass exceeding the threshold.

\subsection{Localization in transport distance}\label{ss42}

As we will later see, the task of finding the first good intermediate
measure between $\mu_0$ and $\mu_1$ is slightly more difficult than 
finding the rest of the geodesic. This is due to the fact that after 
some $\mu_t$ with $t \in (0,1)$ has been fixed we can consider the 
transport distances to be essentially constant. This useful 
observation was made by Rajala in \cite{R2012}. It follows from two simple 
statements. First when one fixes an intermediate measure, 
the length of the curves along which the transport is done gets 
fixed. This is the content of the next proposition which was proved 
in \cite[Proposition 1]{R2012}.

\begin{proposition}\label{prop:separation}
 Let $\mu_0,\, \mu_1 \in  \ProbabilitiesTwo X$ and $t_0 \in (0,1)$.
 Suppose that there exist constants $0 \le C_1 \le C_2 < \infty$ 
 and a measure  $\ppi \in \gopt(\mu_0,\mu_1)$ with
 \begin{equation}\label{eq:lengthbounds}
  C_1 \le l(\gamma) \le C_2 \qquad\text{for }\ppi\text{-a.e. }\gamma \in \geo(X).
 \end{equation}
 Then the bounds in \eqref{eq:lengthbounds} hold $\tilde\ppi$-a.e. 
 for any $\tilde\ppi \in \gopt(\mu_0,\mu_1)$ with 
 $(\e_{t_0})_\sharp\tilde\ppi = (\e_{t_0})_\sharp\ppi$.
\end{proposition}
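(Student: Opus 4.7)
The strategy is to glue the first half of $\ppi$ with the second half of $\tilde\ppi$ at their common intermediate measure $\nu:=(\e_{t_0})_\sharp\ppi=(\e_{t_0})_\sharp\tilde\ppi$, and to exploit the resulting cost identities to transfer the length bounds from $\ppi$ to $\tilde\ppi$. Since $\ppi,\tilde\ppi\in\gopt(\mu_0,\mu_1)$, the marginals $\alpha:=(\e_0,\e_{t_0})_\sharp\ppi$ and $\beta:=(\e_{t_0},\e_1)_\sharp\tilde\ppi$ are optimal plans from $\mu_0$ to $\nu$ and from $\nu$ to $\mu_1$ respectively (they saturate the bounds $W_2^2(\mu_0,\nu)=t_0^2W_2^2(\mu_0,\mu_1)$ and $W_2^2(\nu,\mu_1)=(1-t_0)^2W_2^2(\mu_0,\mu_1)$), and in particular $W_2(\mu_0,\nu)+W_2(\nu,\mu_1)=W_2(\mu_0,\mu_1)$. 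Disintegrating $\alpha=\int\alpha_y\otimes\delta_y\,\d\nu(y)$ and $\beta=\int\delta_y\otimes\beta_y\,\d\nu(y)$ with respect to $\nu$, I would define
$$
\gamma:=\int_X \alpha_y\otimes\delta_y\otimes\beta_y\,\d\nu(y)\in\prob{X\times X\times X},
$$
whose $(1,2)$- and $(2,3)$-marginals are $\alpha$ and $\beta$, and whose $(1,3)$-marginal is a coupling between $\mu_0$ and $\mu_1$.

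Next, the triangle inequality and Cauchy--Schwarz give
$$
\int\sfd^2(x,z)\,\d\gamma\le\int\bigl(\sfd(x,y)+\sfd(y,z)\bigr)^2\,\d\gamma\le\bigl(W_2(\mu_0,\nu)+W_2(\nu,\mu_1)\bigr)^2=W_2^2(\mu_0,\mu_1),
$$
while the reverse inequality $\int\sfd^2(x,z)\,\d\gamma\ge W_2^2(\mu_0,\mu_1)$ holds because the $(1,3)$-marginal is a coupling of $\mu_0$ and $\mu_1$. All inequalities are therefore equalities, which yields on the one hand $\sfd(x,z)=\sfd(x,y)+\sfd(y,z)$ $\gamma$-a.e. (so $y$ lies on a geodesic from $x$ to $z$) and on the other, via the Cauchy--Schwarz equality case, the proportionality $(1-t_0)\sfd(x,y)=t_0\sfd(y,z)$ $\gamma$-a.e.

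The decisive step uses the product structure: by construction, for $\nu$-a.e. $y$ the conditional $\gamma_y=\alpha_y\otimes\beta_y$ on $X\times X$ is a product, so $\sfd(x,y)$ and $\sfd(y,z)$ are \emph{independent} nonnegative random variables under $\gamma_y$. An almost-sure proportionality between two independent nonnegative random variables forces each of them to be a.s.\ constant. Hence for $\nu$-a.e. $y$ there exists $r(y)\ge 0$ with $\sfd(x,y)=r(y)$ for $\alpha_y$-a.e. $x$ and $\sfd(y,z)=\tfrac{1-t_0}{t_0}r(y)$ for $\beta_y$-a.e. $z$. The hypothesis $l(\gamma)\in[C_1,C_2]$ $\ppi$-a.e. translates into $\sfd(\gamma_0,\gamma_{t_0})=t_0 l(\gamma)\in[t_0C_1,t_0C_2]$ $\alpha$-a.e., so $r(y)\in[t_0C_1,t_0C_2]$ for $\nu$-a.e. $y$, and consequently $\sfd(y,z)\in[(1-t_0)C_1,(1-t_0)C_2]$ $\beta$-a.e. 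Translating back to $\tilde\ppi$ via $l(\tilde\gamma)=\sfd(\tilde\gamma_{t_0},\tilde\gamma_1)/(1-t_0)$ gives $C_1\le l(\tilde\gamma)\le C_2$ for $\tilde\ppi$-a.e. $\tilde\gamma$.

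The only subtle point is the Cauchy--Schwarz equality case combined with the product-in-$y$ structure of $\gamma_y$, which is what upgrades an a.s.\ proportionality into the individual constancy of $\sfd(\cdot,y)$ and $\sfd(y,\cdot)$ on the respective conditional fibres; everything else (optimality of the marginal plans, the gluing construction and the triangle--Cauchy--Schwarz chain) is standard and essentially mechanical.
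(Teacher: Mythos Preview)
Your proof is correct. The paper does not actually prove this proposition; it merely cites \cite[Proposition~1]{R2012}, so there is no in-paper argument to compare against. Your gluing of $(\e_0,\e_{t_0})_\sharp\ppi$ with $(\e_{t_0},\e_1)_\sharp\tilde\ppi$ through the common midpoint marginal $\nu$, followed by the triangle--Cauchy--Schwarz chain to force the proportionality $(1-t_0)\sfd(x,y)=t_0\sfd(y,z)$ $\gamma$-a.e., and finally the observation that under the product conditionals $\alpha_y\otimes\beta_y$ the two distances are independent (so a.s.\ proportionality forces each to be $\gamma_y$-a.s.\ constant), is precisely the mechanism behind Rajala's proof. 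The only cosmetic difference is that Rajala phrases the last step slightly differently, but the content---that the length is determined by the midpoint---is identical.
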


In order to use the previous proposition we will need another 
observation which is a simple consequence of cyclical monotonicity
(cf. Chapter 5 in Villani's survey \cite{Villani09} for a review of cyclical monotonicity).
Namely, when we work on a part of the transport with some bounds on 
the lengths of the curves, this part will not get mixed with other 
parts of the measure at any intermediate time. For the proof of this 
fact see \cite[Lemma 2.5]{R2012}.

\begin{lemma}\label{lma:separation}
 Take $0 \le C_1\le C_2 \le C_3 \le C_4 \le \infty$ and define 
 \[
  A_1 = \{\gamma \in \geo(X) \,:\, C_1 \le l(\gamma) \le C_2 \}
 \quad \text{and} \quad
  A_2 = \{\gamma \in \geo(X) \,:\, C_3 < l(\gamma) \le C_4 \}.
 \]
 Then for any $\ppi \in \gopt(\mu_0,\mu_1)$ and any $t \in (0,1)$
 there exists a Borel set $E \subset \geo(X)$ with $\ppi(E)=0$ such that
 \[
  \{(\gamma, \hat\gamma) \in (A_1\setminus E) \times (A_2\setminus E)\,:\,
  \gamma_t = \hat\gamma_t\} = \emptyset.
 \]
\end{lemma}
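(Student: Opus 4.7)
The plan is to use $\sfd^2$-cyclical monotonicity of the support of an optimal plan together with a strict triangle inequality argument. The geometric content is that if two geodesics of strictly different lengths were to meet at a common intermediate point $x=\gamma_t=\hat\gamma_t$, swapping their endpoints yields, through the broken path across $x$, a strictly smaller sum of squared costs, contradicting optimality.

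First I would fix the $\ppi$-null set: take $E:=\geo(X)\setminus\supp\ppi$, which is Borel (complement of a closed set) and $\ppi$-negligible, since $\ppi$ is a Borel probability measure on the Polish space $C([0,1];X)$, hence Radon and concentrated on its support. The continuity of the evaluation map $(\e_0,\e_1):\geo(X)\to X\times X$ transfers membership in $\supp\ppi$ to membership in $\supp(\e_0,\e_1)_\sharp\ppi$: if every open neighbourhood of $\gamma$ has positive $\ppi$-measure, then every open neighbourhood of $(\gamma_0,\gamma_1)$ has positive $(\e_0,\e_1)_\sharp\ppi$-measure. Since $(\e_0,\e_1)_\sharp\ppi$ is an optimal Kantorovich plan between $\mu_0$ and $\mu_1$, its support is $\sfd^2$-cyclically monotone.

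Next I would argue by contradiction: suppose $\gamma\in A_1\setminus E$ and $\hat\gamma\in A_2\setminus E$ satisfy $\gamma_t=\hat\gamma_t=:x$. Writing $a:=l(\gamma)\le C_2\le C_3<l(\hat\gamma)=:b$, in particular $a<b$, cyclical monotonicity applied to the pairs $(\gamma_0,\gamma_1)$, $(\hat\gamma_0,\hat\gamma_1)\in\supp(\e_0,\e_1)_\sharp\ppi$ gives
\[
a^2+b^2=\sfd^2(\gamma_0,\gamma_1)+\sfd^2(\hat\gamma_0,\hat\gamma_1)\le \sfd^2(\gamma_0,\hat\gamma_1)+\sfd^2(\hat\gamma_0,\gamma_1).
\]
On the other hand, the triangle inequality through $x$, using $\sfd(\gamma_0,x)=ta$, $\sfd(x,\hat\gamma_1)=(1-t)b$ and the symmetric identities, bounds the right-hand side by $(ta+(1-t)b)^2+(tb+(1-t)a)^2$, which expands to $a^2+b^2-2t(1-t)(a-b)^2$. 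Since $t\in(0,1)$ and $a\ne b$, this is strictly smaller than $a^2+b^2$, a contradiction.

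The only technical subtlety is the choice of $E$: we need it to be Borel and $\ppi$-negligible while still ensuring that for $\gamma\notin E$ the endpoint pair $(\gamma_0,\gamma_1)$ lies in a $\sfd^2$-cyclically monotone set, and this is resolved by the Radon property of Borel probabilities on Polish spaces combined with the continuity of $(\e_0,\e_1)$. Everything else is the routine endpoint-swapping computation encoded above.
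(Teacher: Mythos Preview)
Your proof is correct: the choice $E=\geo(X)\setminus\supp\ppi$ is Borel and $\ppi$-negligible, continuity of $(\e_0,\e_1)$ pushes $\supp\ppi$ into $\supp(\e_0,\e_1)_\sharp\ppi$, and the cyclical monotonicity/endpoint-swapping computation $(ta+(1-t)b)^2+(tb+(1-t)a)^2=a^2+b^2-2t(1-t)(a-b)^2$ yields the desired contradiction. The paper does not supply its own proof here but simply cites \cite[Lemma~2.5]{R2012}; your argument is exactly the standard cyclical monotonicity proof one expects at that reference.
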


\subsection{Density bounds for the minimizers}\label{ss43}

The information from the minimizers of the entropy and of the excess 
mass functional are obtained with a contradiction argument. First we 
assume that there exists a minimizer which does not have the desired 
density bound. After this we isolate the part of the minimizer where 
the density bound is exceeded and redefine this part of the measure to 
be something slightly better. If this new measure is again an 
intermediate point and we have strictly decreased the energy we are minimizing (the entropy or the excess mass)
we obtain a contradiction, so that the minimizer must satisfy the density bound. To prove that we indeed get an intermediate point we use 
the next lemma, whose proof relies on the joint convexity of $(\mu,\nu)\mapsto W_2^2(\mu,\nu)$, 
which was again proved by Rajala in \cite[Lemma 3.5]{R2011b}.

\begin{lemma}\label{lma:combined}
 Let $\mu_0,\, \mu_1 \in \ProbabilitiesTwo X$. Then for any $\lambda\in (0,1)$, 
 any $\ppi \in \gopt(\mu_0,\mu_1)$, any Borel function 
 $f \colon \geo(X) \to [0,1]$ with $c = (f\ppi)(\geo(X)) \in (0,1)$
 and any 
 \[
  \nu \in \mathcal{I}_\lambda\left(\frac1{c} (\e_{0})_\sharp\left(f\ppi\right),
      \frac1{c} (\e_{1})_\sharp\left(f\ppi\right)\right)
 \]             
 we have
 \[
  (\e_{\lambda})_\sharp\left((1-f)\ppi\right) + c\nu \in \mathcal{I}_\lambda(\mu_0, \mu_1).
 \]
\end{lemma}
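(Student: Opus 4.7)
The plan is to construct explicit couplings between $\mu_0$ and
\[
\mu^*:=(\e_\lambda)_\sharp\bigl((1-f)\ppi\bigr)+c\nu,
\]
and between $\mu^*$ and $\mu_1$, bound their costs by $\lambda^2 W_2^2(\mu_0,\mu_1)$ and $(1-\lambda)^2 W_2^2(\mu_0,\mu_1)$ respectively, and then force equality via the triangle inequality. Note first that $\mu^*\in\probt X$ has total mass $(1-c)+c=1$, and that $(\e_0)_\sharp((1-f)\ppi)+c\cdot\tfrac 1c(\e_0)_\sharp(f\ppi)=(\e_0)_\sharp\ppi=\mu_0$, with the analogous identity at time $1$.

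For the first bound I would introduce
\[
\ssigma:=(\e_0,\e_\lambda)_\sharp\bigl((1-f)\ppi\bigr)+c\,\ttau\in\BorelMeasures{X\times X},
\]
where $\ttau$ is a plan realizing $W_2\bigl(\tfrac1c(\e_0)_\sharp(f\ppi),\nu\bigr)$. A direct check on its marginals shows that $\ssigma$ is an admissible coupling between $\mu_0$ and $\mu^*$. Since $\ppi$ is a geodesic plan, $\sfd(\gamma_0,\gamma_\lambda)=\lambda\,\sfd(\gamma_0,\gamma_1)$ holds $\ppi$-a.e., and by the assumption $\nu\in\mathcal I_\lambda\bigl(\tfrac1c(\e_0)_\sharp(f\ppi),\tfrac1c(\e_1)_\sharp(f\ppi)\bigr)$ we have $W_2^2\bigl(\tfrac1c(\e_0)_\sharp(f\ppi),\nu\bigr)=\lambda^2\,W_2^2\bigl(\tfrac1c(\e_0)_\sharp(f\ppi),\tfrac1c(\e_1)_\sharp(f\ppi)\bigr)$. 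Bounding this last Wasserstein distance by the cost of the (a priori suboptimal) plan $\tfrac1c(\e_0,\e_1)_\sharp(f\ppi)$ yields
\[
W_2^2(\mu_0,\mu^*)\leq\int\sfd^2\,\d\ssigma\leq \lambda^2\int\sfd^2(\gamma_0,\gamma_1)\,\d\ppi(\gamma)=\lambda^2\,W_2^2(\mu_0,\mu_1).
\]

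A completely symmetric construction, based on $(\e_\lambda,\e_1)_\sharp((1-f)\ppi)+c\,\ttau'$ with $\ttau'$ optimal between $\nu$ and $\tfrac1c(\e_1)_\sharp(f\ppi)$, produces $W_2(\mu^*,\mu_1)\leq(1-\lambda)\,W_2(\mu_0,\mu_1)$. The triangle inequality
\[
W_2(\mu_0,\mu_1)\leq W_2(\mu_0,\mu^*)+W_2(\mu^*,\mu_1)\leq\lambda\,W_2(\mu_0,\mu_1)+(1-\lambda)\,W_2(\mu_0,\mu_1)
\]
then forces both intermediate inequalities to be equalities, which is precisely the statement $\mu^*\in\mathcal I_\lambda(\mu_0,\mu_1)$. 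There is no genuine obstacle; the only care required is the bookkeeping of the normalization factors $c$ and $1-c$ in the gluing of plans, and the observation that no optimality of $\ppi$ is needed for the restricted piece beyond the fact that $\tfrac1c(\e_0,\e_1)_\sharp(f\ppi)$ is \emph{some} admissible plan between its own marginals.
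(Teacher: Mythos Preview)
Your argument is correct and is essentially the same as the approach indicated in the paper (and carried out in Rajala's \cite[Lemma~3.5]{R2011b}): the paper invokes the joint convexity of $(\mu,\nu)\mapsto W_2^2(\mu,\nu)$, and your explicit gluing of the plans $(\e_0,\e_\lambda)_\sharp((1-f)\ppi)$ and $c\ttau$ is precisely how one proves that convexity; the conclusion via the triangle inequality is then forced exactly as you wrote. The only place the optimality of $\ppi$ enters is in the final identification $\int\sfd^2(\gamma_0,\gamma_1)\,\d\ppi=W_2^2(\mu_0,\mu_1)$, which you use correctly.
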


The first step which uses the minimization of the excess mass functional $\mathcal{F}_C$
 in \eqref{eq:excessmass} is the same one 
that was taken in \cite[Proposition 3.11]{R2011b}. We repeat some key 
points of the proof for the convenience of the reader. In \cite{R2011b} 
the functionals $\mathcal{F}_C$ were minimized only in the bounded case. 
A reduction to this case can be also made here and so the following 
proposition which was proved in a slightly different form in 
\cite[Proposition 3.9 and Proposition 3.11]{R2011b} will suffice.

\begin{proposition}\label{prop:excesszero}
 Assume that $(X,\sfd)$ is a bounded metric space with a finite measure $\mm$.
 Let $\nu_0, \,\nu_1 \in \ProbabilitiesTwo X$ and $t \in [0,1]$. Suppose 
 that there exists a constant $C> 0$ so that for any
 $\ppi \in \gopt(\nu_0,\nu_1)$ and $A\subset X$  Borel with
 $\ppi(\e_t^{-1}(A))> 0$ we have that for the measures
 \begin{equation}\label{eq:hatdef}
  \hat\nu_0 = \frac{1}{\ppi(\e_t^{-1}(A))}(\e_0)_\sharp\left(\ppi\res\e_t^{-1}(A)\right), \qquad
  \hat\nu_1 = \frac{1}{\ppi(\e_t^{-1}(A))}(\e_1)_\sharp\left(\ppi\res\e_t^{-1}(A)\right)
 \end{equation}
 there exists a measure $\hat\nu \in \mathcal{I}_t(\hat\nu_0,\hat\nu_1)$ 
 with
 \begin{equation}\label{eq:entropybound1}
  \entv(\hat\nu) \le \log\frac{C}{\ppi(\e_t^{-1}(A))}.
 \end{equation}
 Then there exists a minimizer $\mu_t$ of $\mathcal{F}_{C}$ in
 $\mathcal{I}_t(\nu_0,\nu_1)$ and the minimum value is zero, so that
 $\mu_t\ll\mm$ and its density is less than $C$ $\mm$-a.e. in $X$.
\end{proposition}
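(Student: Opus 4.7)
The plan is to establish existence of a minimizer of $\mathcal{F}_C$ on $\mathcal{I}_t(\nu_0,\nu_1)$ and then to prove that this minimum equals zero, by contradiction. Existence follows from a standard compactness argument: since $(X,\sfd)$ is bounded, $\mm$ is finite and $(X,\sfd)$ is complete separable, Prokhorov's theorem gives that $\prob X = \probt X$ is $W_2$-compact, the set $\mathcal{I}_t(\nu_0,\nu_1)$ is $W_2$-closed in $\prob X$, and $\mathcal{F}_C$ is lower semicontinuous under weak convergence (as noted right after the definition of $\mathcal F_C$); hence $\mathcal{F}_C$ attains its infimum on $\mathcal{I}_t(\nu_0,\nu_1)$.

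Now suppose for contradiction that this infimum equals $\beta > 0$, attained at $\mu_t = \rho_t\mm + \mu_t^s$. Set $A := \{\rho_t > C\} \cup S$, where $S$ is a Borel $\mm$-null carrier of $\mu_t^s$, so that $\mu_t|_{X\setminus A} \le C\mm$ and $c := \mu_t(A) = C\mm(A) + \beta$ is \emph{strictly} bigger than $C\mm(A)$. Fix $\ppi \in \gopt(\nu_0,\nu_1)$ with $(\e_t)_\sharp \ppi = \mu_t$, so that $\ppi(\e_t^{-1}(A)) = c$, and apply the hypothesis at $(\ppi, A)$ to obtain $\hat\nu \in \mathcal{I}_t(\hat\nu_0,\hat\nu_1)$ with $\entv(\hat\nu) \le \log(C/c)$, where $\hat\nu_0,\hat\nu_1$ are as in \eqref{eq:hatdef}. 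Because $c > C\mm(A)$, this bound is strictly smaller than $-\log\mm(A)$; combined with Jensen's inequality $\entv(\hat\nu)\ge -\log\mm(\supp\hat\nu)$, it forces both the absolute continuity of $\hat\nu = \hat\rho\mm$ and $\mm(\supp\hat\nu)\ge c/C > \mm(A)$, so $\hat\nu$ necessarily spreads beyond $A$. Applying Lemma~\ref{lma:combined} with $\lambda = t$ and $f = \chi_{\e_t^{-1}(A)}$ then produces
$$\mu'_t \;:=\; \mu_t|_{X\setminus A} + c\hat\nu \;\in\; \mathcal{I}_t(\nu_0,\nu_1).$$

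The main obstacle is the quantitative step: proving $\mathcal{F}_C(\mu'_t) < \beta$, which would contradict the minimality of $\mu_t$. My approach is via the dual representation $\mathcal{F}_C(\mu) = \sup_{B}\bigl(\mu(B) - C\mm(B)\bigr)$: since $\mu_t|_{X\setminus A} \le C\mm$, any Borel $B$ satisfies $\mu'_t(B) - C\mm(B) \le c\hat\nu(B) - C\mm(B\cap A)$, and the strict entropy gap then has to be converted into a quantitative strict reduction below $c - C\mm(A) = \beta$. This is a swap-and-spread estimate in the spirit of Rajala~\cite{R2011b}, exploiting strict convexity of $x\log x$ to turn $\entv(\hat\nu) < -\log\mm(A)$ into a genuine loss of concentration of $c\hat\nu$. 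If the direct excess-mass comparison only yields the non-strict bound $\mathcal{F}_C(\mu'_t) \le \beta$, I would reinforce by a secondary minimization of $\entv$ among minimizers of $\mathcal{F}_C$: any singular mass of $\mu_t$ makes $\entv(\mu_t) = +\infty$ while $\mu'_t$ is automatically absolutely continuous and of finite entropy, and in the purely absolutely continuous case strict convexity of $x\log x$ together with $\rho_t > C$ $\mm$-a.e. on $A$ versus the bound $\entv(c\hat\nu) \le c\log C$ gives $\entv(\mu'_t) < \entv(\mu_t)$, again contradicting the double minimization.
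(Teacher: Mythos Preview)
Your argument has a genuine gap at the ``main obstacle'' step: the full replacement $\mu'_t = \mu_t|_{X\setminus A} + c\hat\nu$ can actually \emph{increase} $\mathcal{F}_C$, so neither the strict inequality $\mathcal{F}_C(\mu'_t) < \beta$ nor even the non-strict one holds in general. The entropy bound $\entv(\hat\nu) \le \log(C/c)$ forces $\hat\nu$ to spread (indeed $\mm(\{\hat\rho>0\}) \ge c/C$), but it does not prevent $\hat\nu$ from landing on the part of $X\setminus A$ where $\rho_t$ is already equal to $C$; there the new density $\rho_t + c\hat\rho$ overshoots $C$ and creates fresh excess mass. A concrete instance: take $C=1$, $A$ with $\mm(A)=1/4$, $\rho_t=2$ on $A$ and $\rho_t=1$ on a set $E_2\subset X\setminus A$ of measure $1/2$ (so $c=1/2$ and $\beta=1/4$). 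A measure $\hat\nu$ with density $4$ on some $E_1\subset A$ of measure $1/8$ and density $1$ on $E_2$ has entropy exactly $\log 2 = \log(C/c)$, yet the resulting $\mu'_t$ satisfies $\mathcal{F}_C(\mu'_t) = 1/8 + 1/4 = 3/8 > \beta$. Your dual bound $\mu'_t(B)-C\mm(B) \le c\hat\nu(B) - C\mm(B\cap A)$ is correct, but supremizing the right-hand side over $B$ gives something that can exceed $\beta$, and so can the actual $\mathcal{F}_C(\mu'_t)$. Since in this case $\mu'_t$ is not even a minimizer of $\mathcal{F}_C$, your fallback to a secondary entropy minimization among minimizers does not rescue the argument.

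The paper's proof avoids this via two devices you are missing. First, it works with a threshold $C'>C$ (letting $C'\downarrow C$ only at the end) and, crucially, replaces only a \emph{small} fraction $\phi/(C'+\delta)$ of the mass sitting above level $C'+\delta$; this guarantees that only mass strictly above $C'$ is moved, so the perturbed measure $\tilde\nu$ stays a minimizer of $\mathcal{F}_{C'}$. Second, the contradiction is obtained not by strictly reducing the excess mass but by a support-size argument: one first selects the minimizer $\nu$ so that $\mm(\{\rho_\nu>C'\})$ is within a factor $(C/C')^{1/4}$ of its supremum over all minimizers, and then the Jensen bound on $\hat\nu$ forces $\mm(\{\tilde\rho>C'\})$ to strictly exceed that supremum, which is the contradiction.
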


\begin{proof} 
 Take a threshold $C' > C$. It suffices to prove that the minimum of
 $\mathcal{F}_{C'}$ in $\mathcal{I}_t(\nu_0,\nu_1)$ is zero and then
 let $C' \downarrow C$. Without loss of generality we may assume that
 all minimizers, whose existence is ensured by tightness of $\mathcal{I}_t(\nu_0,\nu_1)$
 in $\prob{X}$ and lower semicontinuity,
  are absolutely continuous with respect to $\mm$.
 Indeed, suppose that there is a measure $\omega\in \mathcal{I}_t(\nu_0,\nu_1)$ with
a singular part. Let $A$ be an $\mm$-negligible Borel set where
the singular part of $\omega$ is concentrated. By the assumption of the Proposition
together with Lemma~\ref{lma:combined} we can then redefine the part of
$\omega$ which is supported on $A$ to be a measure having finite 
entropy. In particular it will be absolutely continuous with respect to
$\mm$. Since we are redefining only the singular part of $\omega$,
the value of the functional $\mathcal{F}_{C'}$ does not increase after
the redefinition.

 Assume, contrary to the claim, that the infimum of $\mathcal{F}_{C'}$
 in $\mathcal{I}_t(\nu_0,\nu_1)$ is positive. Denote by
 $\mathcal{M}_\text{min} \subset \mathcal{I}_t(\nu_0,\nu_1)$ the set of
 minimizers of $\mathcal{F}_{C'}$ in $\mathcal{I}_t(\nu_0,\nu_1)$.
 Applying the proof of \cite[Proposition 3.9]{R2011b} we see that
 the set $\mathcal{M}_\text{min}$ is always nonempty.
 Take $\nu \in \mathcal{M}_\text{min}$  for which
 \begin{equation}\label{eq:almostmaxpos}
  \mm(\{x \in X ~:~ \rho_\nu(x) > C'\}) \ge \left(\frac{C}{C'}\right)^{\frac14} 
  \sup_{\omega \in \mathcal{M}_\text{min}}\mm(\{x \in X ~:~ \rho_\omega(x) > C'\}),
 \end{equation}
 where $\nu = \rho_\nu\mm$ and $\omega = \rho_\omega \mm$. Let
 $\ppi \in \gopt(\nu_0,\nu_1)$ be such that $(\e_t)_\sharp\ppi = \nu$.

 There exists $\delta >0$ so that 
 \[
  \mm(A) > \left(\frac{C}{C'}\right)^{\frac12} \mm(A')  
 \]
 with
 \begin{equation}\label{eq:deltabound}
  A' = \{x \in X ~:~\rho_\nu(x) > C'\} \quad \text{ and }\quad
  A = \{x \in A' ~:~ \rho_\nu(x) > C' + \delta\}.  
 \end{equation}

 {F}rom the assumption of the proposition we know the existence of
 a measure $\hat\nu=\hat{\rho}\mm\in \mathcal{I}_t(\hat\nu_0,\hat\nu_1)$ with
 $\entv(\hat\nu) \le \log ({C}/{\nu(A)})$,
 where $\hat\nu_0$ and $\hat\nu_1$ are given by \eqref{eq:hatdef}.
 By Jensen's inequality we then have
 \begin{equation}\label{eq:bigsupport}
  \mm(\{\hat\rho>0\}) \ge \frac{\nu(A)}{C} \ge \frac{C'}{C} \mm(A)
         \ge \left(\frac{C'}{C}\right)^{\frac12} \mm(A').
 \end{equation}
 
 We can now consider a new measure $\tilde\nu = \tilde\rho \mm$ defined
 as the combination
 \begin{equation}\label{eq:tildenu}
  \tilde\nu = \nu\res(X \setminus A) + \frac{C'}{C'+ \delta} \nu\res A
               + \frac{\delta}{C'+\delta}\nu(A) \hat\nu.
 \end{equation}
 By Lemma~\ref{lma:combined} and the convexity of $\mathcal{I}_t$ we have
 $\tilde\nu \in \mathcal{I}_t(\nu_0,\nu_1)$. Due to the definition
 \eqref{eq:deltabound} we only redistribute some of the mass above
 the density $C'$ when we replace the measure $\nu$ by the measure
 $\tilde\nu$, so that $\tilde\nu\in \mathcal{M}_{\text{min}}$. Let us calculate how much the excess mass functional
 changes in this replacement:
 \[
  \mathcal{F}_{C'}(\nu) - \mathcal{F}_{C'}(\tilde\nu) 
   =\int_{\{\rho_\nu < C'\}}\min\left\{C'-\rho_\nu, \frac{\delta}{C'+\delta}\nu(A)\hat\rho\right\}\,\d\mm.
 \]
 Because of the minimality of $\mathcal{F}_{C'}$ at $\nu$ this
 integral must be zero. Therefore $\{\hat\rho>0\}\cap\{\rho_\nu<C'\}$
 is $\mm$-negligible. On the other hand, for any
 $y\in \{\hat\rho>0\}\cap\{ \rho_\nu \ge C'\}$ we have
 $\tilde\rho(y) > C'$ (if $y\in X\setminus A$ this is trivial, if $y\in A$ the second term
 in \eqref{eq:tildenu} gives a contribution larger than $C'$). This, together with our choice 
 \eqref{eq:almostmaxpos} of $\nu$, leads to a contradiction:
 $$
  \mm(\{\tilde\rho > C'\}) \geq  \mm(\{\hat\rho>0\})
              \ge \left(\frac{C'}{C}\right)^{\frac12} \mm(A') 
     \ge \left(\frac{C'}{C}\right)^{\frac14} \sup_{\omega \in \mathcal{M}_\text{min}}\mm(\{\rho_\omega> C'\}).
 $$
\end{proof}

 Next we make another minimization. This time for the entropy itself.
 A similar argument was used in \cite{R2012} to obtain good geodesics
 in metric spaces satisfying the reduced curvature dimension 
 condition $CD^*(K,N)$.

\begin{proposition}\label{prop:spreadtolimit}
 Let $\mu_0, \,\mu_1 \in \ProbabilitiesTwo X$ and $t \in [0,1]$.
 Suppose that there exists a constant $C> 0$ so that for any
 $\ppi \in \gopt(\mu_0,\mu_1)$ and $A \subset X$ Borel with
 $\ppi(\e_t^{-1}(A))> 0$ we have that for the restricted measures
 $\hat{\nu}_0,\,\hat{\nu}_1$ in \eqref{eq:hatdef} 
 there exists a measure
 $\hat\nu \in \mathcal{I}_t(\hat\mu_0,\hat\mu_1)$ satisfying \eqref{eq:entropybound1}.
 Then for any minimizer $\mu_\text{min}$ of the entropy in
 $\mathcal{I}_t(\mu_0,\mu_1)$ we have $\mu_\text{min}\leq C\mm$.
\end{proposition}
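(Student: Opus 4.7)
The plan is to argue by contradiction, adapting the scheme of Proposition~\ref{prop:excesszero} with the entropy replacing the excess mass functional. A minimizer $\mu_{\min}=\rho_{\min}\mm$ exists by Lemma~\ref{lma:minexists}; moreover, applying the hypothesis with $A=X$ produces a measure in $\mathcal{I}_t(\mu_0,\mu_1)$ of entropy at most $\log C<\infty$, so the infimum of $\entv$ on $\mathcal{I}_t(\mu_0,\mu_1)$ is finite, and by \eqref{eq:defRelentropy} every minimizer must be absolutely continuous with respect to $\mm$. Suppose for contradiction that $\mm(\{\rho_{\min}>C\})>0$ and pick $\delta>0$ so that $A:=\{\rho_{\min}>C+\delta\}$ has $m:=\mu_{\min}(A)>0$. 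Fix $\ppi\in\gopt(\mu_0,\mu_1)$ with $(\e_t)_\sharp\ppi=\mu_{\min}$, apply the hypothesis to $(\ppi,A)$ to produce $\hat\nu\in\mathcal{I}_t(\hat\mu_0,\hat\mu_1)$ with $\entv(\hat\nu)\leq\log(C/m)$, and invoke Lemma~\ref{lma:combined} with $f=\lambda\chi_{\e_t^{-1}(A)}$ (so $c=\lambda m$) to obtain
\[
\tilde\nu_\lambda := \mu_{\min}\res(X\setminus A) + (1-\lambda)\mu_{\min}\res A + \lambda m\hat\nu \in \mathcal{I}_t(\mu_0,\mu_1),\qquad \lambda\in[0,1].
\]
The task then reduces to showing $\entv(\tilde\nu_\lambda)<\entv(\mu_{\min})$ for some $\lambda>0$, contradicting minimality.

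The strict decrease is established by examining the right derivative at $\lambda=0$. If $\hat\rho>0$ on a set of positive $\mm$-measure inside $\{\rho_{\min}=0\}$, the term $u(\lambda m\hat\rho)=\lambda m\hat\rho(\log\lambda+\log(m\hat\rho))$, with $u(s)=s\log s$, produces a leading $\lambda\log\lambda$ contribution to $\entv(\tilde\nu_\lambda)-\entv(\mu_{\min})$ that is negative for small $\lambda$ and dominates the $O(\lambda)$ corrections, so the decrease is automatic. Otherwise $\hat\rho$ vanishes $\mm$-a.e.\ on $\{\rho_{\min}=0\}$ and differentiation under the integral yields
\[
\tfrac{d}{d\lambda}\entv(\tilde\nu_\lambda)\big|_{\lambda=0^+}=-\int_A\rho_{\min}\log\rho_{\min}\,d\mm+m\int\log\rho_{\min}\,\hat\rho\,d\mm,
\]
with the first term strictly below $-m\log(C+\delta)$ by the definition of $A$ and the second bounded by $m\log(C/m)$ through the Young-type inequality $\int\hat\rho\log\rho_{\min}\,d\mm\leq\entv(\hat\nu)$, i.e.\ nonnegativity of the relative entropy of $\hat\nu$ with respect to $\mu_{\min}$.

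The main obstacle is that this crude bound is strictly negative only when $m>C/(C+\delta)$; for small $m$ a quantitative refinement of the Young step is required. To treat the general case I would follow the template of Proposition~\ref{prop:excesszero} more closely, introducing an auxiliary threshold $C'\in(C,\infty)$ and aiming at the stronger conclusion $\mu_{\min}\leq C'\mm$, letting $C'\downarrow C$ at the end: with $A'=\{\rho_{\min}>C'\}$ and $\delta$ chosen so that $\mm(A)>(C/C')^{1/2}\mm(A')$ for $A=\{\rho_{\min}>C'+\delta\}$ (possible by continuity in $\delta$), Jensen's inequality applied to the entropy bound yields $\mm(\{\hat\rho>0\})\geq m/C>(C'/C)^{1/2}\mm(A')$, so $\hat\nu$ is forced to charge $X\setminus A'$, where $\log\rho_{\min}\leq\log C'$. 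Splitting $\int\log\rho_{\min}\,\hat\rho\,d\mm$ along the partition $A\cup(A'\setminus A)\cup(X\setminus A')$ and using this pointwise upper bound on the last piece improves the Young estimate enough to restore strict negativity of the derivative for every~$m$, which completes the contradiction.
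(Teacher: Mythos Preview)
Your opening reductions are fine, and the perturbation $\tilde\nu_\lambda$ you build via Lemma~\ref{lma:combined} is the right object. The gap lies in the final paragraph. You try to close the argument using only the \emph{entropy} bound $\entv(\hat\nu)\le\log(C/m)$ on the replacement measure, and when the crude relative-entropy estimate fails for small $m$ you appeal to Jensen to deduce $\mm(\{\hat\rho>0\})>(C'/C)^{1/2}\mm(A')$. But this only says the \emph{support} of $\hat\nu$ cannot be contained in $A'$; it gives no lower bound on the \emph{mass} $\hat\nu(X\setminus A')$. Nothing prevents $\hat\nu$ from placing $1-\epsilon$ of its mass on $A'$ (where $\log\rho_{\min}$ is unbounded above) and only $\epsilon$ outside, with $\epsilon$ arbitrarily small while still respecting the entropy bound. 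So the split of $\int\hat\rho\log\rho_{\min}\,\d\mm$ along $A\cup(A'\setminus A)\cup(X\setminus A')$ does not yield the quantitative improvement you assert, and the derivative cannot be shown to be negative this way.

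The paper's proof avoids this difficulty by not working directly with the hypothesis measure $\hat\nu$. Instead it first restricts to a \emph{bounded} set $L\subset\{\rho_{\min}>C+\delta\}$, uses cyclical monotonicity to show that all geodesics in $\ppi\res\e_t^{-1}(L)$ lie in a bounded region, and then invokes Proposition~\ref{prop:excesszero} (the excess-mass minimization) on that bounded space to produce a replacement measure $\tilde\nu$ with the pointwise bound $\tilde\rho\le C/\nu(L)$. This $L^\infty$ bound (rather than a mere entropy bound) is what makes the subsequent pointwise estimates on the combined density and the resulting entropy comparison work. Both the reduction to a bounded set and the call to Proposition~\ref{prop:excesszero} are missing from your outline and are the essential ingredients you would need to add.
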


\begin{proof} Without loss of generality, we can assume $t\in (0,1)$.
 Let $\nu=\rho\mm$ be one of the minimizers of the entropy in
 $\mathcal{I}_t(\mu_0,\mu_1)$, which by Lemma~\ref{lma:minexists} we
 know to exist. By \eqref{eq:entropybound1} with $A=X$ we know that $\entv(\nu)<\infty$.
 We need only to show that $\mathcal F_C(\nu)=0$.
 
 Let $\ppi \in \gopt(\mu_0,\mu_1)$ be such that
 $(\e_t)_\sharp\ppi = \nu$.   
 Suppose now by contradiction that $\mathcal{F}_C(\nu) > 0$, let
 $\eta > 0$ be such that $\mm(\{\rho > C + 2\eta\}) > 0$ and 
 define
 \[
  C_1 = \frac{1}{\eta}\left[\mm(\{\rho > C + \eta\})
          -\mm(\{\rho > C + 2\eta\})\right] \ge 0.
 \]
 Since $\tau\mapsto g(\tau):=\mm(\{\rho \geq C + \tau\})$ is nonincreasing,
 there exists $\delta \in (\eta,2\eta)$ such that $-g'(\delta)\leq C_1$. In particular, choosing $\delta$ in this way
 and fixing $x_0\in X$, for $\phi\in (0,\eta/3)$ sufficiently small and $R=R(\phi)$ sufficiently large one has
 $\mm(L') < \mm(L) + (1+C_1)\phi$, where
 \[
  L = \{x \in B(x_0,R) \,:\, \rho(x) > C + \delta\}\qquad 
   \text{and}\qquad L' = \{x \in X \,:\, \rho(x) \ge C + \delta - 3\phi\}.
 \]

 Let $\Gamma \subset \geo(X)$ be a cyclically monotone set on which $\ppi$ is supported.
 Fix $\bar\gamma \in \Gamma \cap \rme_t^{-1}(L)$ and consider any $\gamma \in \Gamma \cap \rme_t^{-1}(L)$.
 Using cyclical monotonicity we get (similarly as in \cite[Theorem 8.22]{Villani09}) 
 \begin{align*}
  \sfd^2(\gamma_0, \gamma_1) & \le \sfd^2(\bar\gamma_0, \bar\gamma_1) + \sfd^2(\gamma_0, \gamma_1) 
                    \le \sfd^2(\bar\gamma_0, \gamma_1) + \sfd^2(\gamma_0, \bar\gamma_1)\\
                    & \le \left(\sfd(\gamma_t, \gamma_1) + \diam(L) + l(\bar\gamma)\right)^2 + \left(\sfd(\gamma_0, \gamma_t) + \diam(L) + l(\bar\gamma)\right)^2\\
                    & = \left((1-t)^2 + t^2\right)\sfd^2(\gamma_0, \gamma_1) + 2(\diam(L) + l(\bar\gamma))\sfd(\gamma_0, \gamma_1) + 2(\diam(L) + l(\bar\gamma))^2.
 \end{align*}
 Since $(1-t)^2 + t^2 = 1 - 2(1-t)t < 1$, the length of the geodesic $\gamma$ has a bound from above
 given in terms of only $\diam(L)$ and $l(\bar\gamma)$.
 Hence the measure $\ppi\res\rme_t^{-1}(L)$ is supported in a uniformly bounded set of curves.

 We can use Proposition~\ref{prop:excesszero} with $\nu_i=(\nu(L))^{-1}(\e_i)_\sharp\ppi\res\e_t^{-1}(L)$
 to find a measure
 \[
  \tilde\nu = \tilde\rho \mm \in \mathcal{I}_t\left(
  \frac{(\e_0)_\sharp\ppi\res\e_t^{-1}(L)}{\nu(L)},
  \frac{(\e_1)_\sharp\ppi\res\e_t^{-1}(L)}{\nu(L)}
  \right)
 \]
 with $\tilde\rho\le {C}/{\nu(L)}$ $\mm$-a.e. in $X$.

 Now consider a new measure $\hat\nu = \hat\rho \mm$ defined as the
  combination
 \[
  \hat\nu = \nu\res (X \setminus L)
            + \frac{C + \delta-\phi}{C+ \delta} \nu\res L
            + \frac{\phi}{C+\delta}\nu(L) \tilde\nu.
 \]
 By Lemma~\ref{lma:combined} we have
 $\hat\nu \in \mathcal{I}_t(\mu_0,\mu_1)$.

 For $x \in L$ we have the estimates
 \begin{align}
  \hat\rho(x) & \le \frac{C + \delta -\phi}{C+ \delta}\rho(x)
                    + \frac{\phi}{C+\delta}\nu(L)\tilde\rho(x)
                \le \frac{(C + \delta-\phi)\rho(x) + C\phi}{C+\delta} \label{eq:tapio1}\\
              & = \rho(x) + \frac{(C-\rho(x))\phi}{C + \delta}
                < \rho(x) - \frac{\delta\phi}{C + \delta}\nonumber
 \end{align}
 and
 \begin{equation}\label{eq:tapio2}
   \hat\rho(x) \ge \frac{C + \delta - \phi}{C+ \delta}\rho(x)
                 > C + \delta - \phi.
 \end{equation}
 For $x \in L' \setminus L$ we have
 \begin{equation}\label{eq:tapio3}
  \hat\rho(x) \le \rho(x) + \frac{\phi}{C+\delta}\nu(L)\tilde\rho(x)
          \le \rho(x) + \frac{C\phi}{C + \delta} < C + \delta + \phi
 \end{equation}
 and for $x \in X \setminus L'$ we get
 \begin{equation}\label{eq:tapio4}
  \hat\rho(x) \le \rho(x) + \frac{\phi}{C+\delta}\nu(L)\tilde\rho(x)
              \le C + \delta - 3\phi + \frac{C\phi}{C + \delta} 
              < C + \delta - 2\phi.
 \end{equation}

 Write $C_2 = \frac{\delta}{C + \delta}\mm(L)$. Let us estimate
 the change in the entropy when we replace $\nu$ by $\hat\nu$: 
 using the convexity inequality $x\log x-y\log y\leq (x-y)(\log x+1)$   
  we can estimate from above $\entv(\hat\nu)-\entv(\nu)$ by
 $$
 \int_X(\hat\rho- \rho) (\log\hat\rho+1)\,\d\mm 
   =\int_X(\hat\rho- \rho) \log\hat\rho\,\d\mm .
 $$
 Now, we set $w:=\hat\rho-\rho$, split $X$ as $L\cup (X\setminus L')\cup (L'\setminus L)$ 
 and use the fact that $w\leq 0$ on $L$ and $w\geq 0$ on $X \setminus L$, the inequalities
 \eqref{eq:tapio1}, \eqref{eq:tapio2}, \eqref{eq:tapio3},
 \eqref{eq:tapio4} and eventually the concavity of $\log$ to get
 \begin{align*}
   &~ \int_L w \log\left(C + \delta-\phi\right)\,\d\mm 
     + \int_{X \setminus L'} w\log\left(C + \delta-2\phi\right)\,\d\mm 
     + \int_{L' \setminus L} w\log\left(C + \delta+\phi\right)\,\d\mm \\
   = &~\left(\log\left(C + \delta-\phi\right) -  \log\left(C + \delta - 2\phi\right)\right)\int_L w\,\d\mm  
    + \left(\log\left(C + \delta+\phi\right) - \log\left(C + \delta - 2\phi\right)\right) \int_{L'\setminus L} w\,\d\mm \\
   \le &~-\left(\log\left(C + \delta-\phi\right) -  \log\left(C + \delta - 2\phi\right)\right) \frac{\delta\phi}{C + \delta}\mm(L)  \\
   & + \left(\log\left(C + \delta+\phi\right) - \log\left(C + \delta - 2\phi\right)\right) \frac{C\phi}{C + \delta}\mm(L'\setminus L) \\
   < &~-\left(\log\left(C + \delta-\phi\right) -  \log\left(C + \delta - 2\phi\right)\right) C_2\phi  
   + \left(\log\left(C + \delta+\phi\right) - \log\left(C + \delta - 2\phi\right)\right) (1+C_1)\phi^2 \\
   \le &- C_2\phi\frac{\phi}{C + \delta - 2\phi}+(1+C_1)\phi^2\frac{3\phi}{C + \delta - 2\phi} < 0
 \end{align*}
  for small enough $\phi\in (0,\eta/3)$. This contradicts the minimality of
 the entropy at $\nu$.
\end{proof}

\subsection{Construction of the geodesic}\label{ss44}

\begin{proof}[Proof of Theorem \ref{thm:goodgeodesics}]
 In this proof, to avoid a cumbersome notation, we switch to the ${\rm exp}$ notation
 and set $C_1:=\|\rho_1\|_{L^\infty(X,\mm)}$.
 Let $D > 0$ be such that $\supp(\mu_1) \subset B(x_0,D)$.
 We will prove the claim with $$t_0 := \min\{\frac{c_2}{2K^-},\frac12\}.$$
 The geodesic is constructed as follows.  First we fix the 
 measure $\mu_{t_0} =\rho_{t_0}\mm\in \mathcal{I}_{t_0}(\mu_0,\mu_1)$ to be a
 minimizer of the entropy in $\mathcal{I}_{t_0}(\mu_0,\mu_1)$.
 After this we define the rest of the geodesic for times
 $t \in (0,t_0)$ inductively. Suppose that for some $n \in \N$
 we have defined $\mu_{k2^{-n}t_0}$ for all $k = 0, 1, \ldots, 2^n$.
 Then for all odd $k \in \N$ with $0 < k < 2^{n+1}$ we define
 $\mu_{k2^{-n-1}t_0}$ to be a minimizer of the entropy in
 $\mathcal{I}_{\frac12}(\mu_{(k-1)2^{-n-1}t_0},\mu_{(k+1)2^{-n-1}t_0})$.
 We construct the geodesic on the interval $(t_0,1]$ in a similar way by 
 iteratively selecting the midpoints with minimal entropy.
 The rest of the geodesic is given by completion. Let
 $\ppi \in \gopt(\mu_0,\mu_1)$ be such that $(\e_t)_\sharp\ppi = \mu_t$
 for all $t \in [0,1]$.
 
 Since we are selecting minimizers of the entropy among all the
 possible intermediate measures in a $CD(K,\infty)$-space, the
 selected measures satisfy the convexity inequality
 \eqref{eq:CDdef} between the given endpoint measures.
 Therefore, by Proposition~\ref{prop:combinedgeod} the inequality
 \eqref{eq:CDdef} holds for all $t \in [0,1]$.

 Let us then concentrate on the entropy estimates assumed in
 Proposition~\ref{prop:excesszero} and Proposition~\ref{prop:spreadtolimit}. 
 Let $\ppi \in \gopt(\mu_0,\mu_1)$ and
 $A\subset X$ Borel with $M := \ppi(\e_{t_0}^{-1}(A))>0$, write
 \[
  \hat\mu_0 = \hat\rho_0\mm = \frac{1}{M}(\e_0)_\sharp\left(\ppi\res\e_{t_0}^{-1}(A)\right) \quad\text{and}\quad
  \hat\mu_1 = \hat\rho_1\mm = \frac{1}{M}(\e_1)_\sharp\left(\ppi\res\e_{t_0}^{-1}(A)\right),
 \]
 and take a measure $\nu \in \mathcal{I}_{t_0}\left(\hat\mu_0, \hat\mu_1\right)$
 which satisfies the convexity inequality \eqref{eq:CDdef}
 between these measures. Now, using \eqref{eq:decay}, 
 we have the estimate (with $\Wgh(x)=\sfd(x,x_0)$)
 \begin{align*}
  \entv(\nu)
  \le~& (1-t_0)\entv\left(\hat\mu_0\right) + t_0 \entv\left(\hat\mu_1\right)
                      + \frac{K^-}{2}t_0(1-t_0)W_2^2\left(\hat\mu_0, \hat\mu_1\right)\\
                  \le~& t_0\log\left(\frac{C_1}{M}\right)
                     + (1-t_0)\int_X \hat\rho_0(x)\left(\log\hat\rho_0(x) + \frac{K^-}2t_0(D+\Wgh(x))^2\right)\,\d\mm(x) \\
                  \le~& t_0\log\left(\frac{C_1}{M}\right)
                     + (1-t_0)\int_X \hat\rho_0(x)\left(\log\left(\frac{c_1}M\right) - c_2\Wgh^2(x) + K^-t_0(D^2+\Wgh^2(x))\right)\,\d\mm(x) \\
                  \le~& \log\left(\frac{\max\{C_1,c_1\}}{M}\right) + K^-D^2
                  = \log\left(\frac{\max\{C_1,c_1\}{\rm exp}[K^-D^2]}{M}\right),
 \end{align*}
 since $K^-t_0 \le c_2$ by the choice of $t_0$. By 
 Proposition~\ref{prop:spreadtolimit} we then have the estimate
 \[
  \|\rho_{t_0}\|_{L^\infty(X,\mm)} \le \max\{C_1,c_1\}{\rm exp}[K^-D^2]
  \le \max\{C_1,c_1\}{\rm exp}[(2K^-+c_2)D^2]=: C.
 \]

 Next we prove that for all $t \in [0,t_0]$ we have
 $\mu_t = \rho_t\mm$ with the estimate
 \begin{equation}\label{eq:decayestimate}
  \rho_t(\gamma_t) \le C{\rm exp}\bigl[-\frac12(1-\frac{t}{t_0})(c_2 - K^-tt_0)\ell^2(\gamma)\bigr] \quad \text{for $\ppi$-a.e. $\gamma \in \geo(X)$.}
 \end{equation}
 First of all the estimate \eqref{eq:decayestimate} is true
 for $t = t_0$. For $t = 0$ we have that, thanks to \eqref{eq:decay}, $\rho_0(\gamma_0)$ can be estimated from above by
 \[
  c_1{\rm exp}\bigl[-c_2\sfd^2(\gamma_0,x_0)\bigr] \le c_1{\rm exp}\bigl[-c_2([\ell(\gamma)-D]^+)^2\bigr] 
                   \le c_1{\rm exp}\bigl[-\frac{c_2}2\ell^2(\gamma)+c_2D^2\bigr] \le C{\rm exp}(-\frac{c_2}2\ell^2(\gamma))
 \]
 and so \eqref{eq:decayestimate} holds also at $t=0$.
 
 Suppose that for some $n \in \N$ the estimate
 \eqref{eq:decayestimate} holds for all $t = k2^{-n}t_0$
 with $k = 0,1,\dots, 2^n$. Take an odd integer $k$ with
 $0 < k < 2^{n+1}$. Our aim is to prove \eqref{eq:decayestimate}
 for $t = k2^{-n-1}t_0$.

 Let $l \in (0,\infty)$ and $\epsilon > 0$ be such that we have
 $\tilde M = \ppi(\{\gamma\,:\,l \le l(\gamma) \le l+\epsilon\})>0$. 
 Then by Proposition~\ref{prop:separation} we know that any measure 
 \[
  \tilde\ppi \in \gopt\left(\frac1{\tilde M}(\e_0)_\sharp\ppi\res{\{\gamma\,:\,l \le \ell(\gamma) \le l+\epsilon\}},
  \frac1{\tilde M}(\e_1)_\sharp\ppi\res{\{\gamma\,:\,l \le \ell(\gamma) \le l+\epsilon\}}\right)  
 \]
 is concentrated on geodesics with lengths in the interval
 $[l,l+\epsilon]$. On the other hand, by Lemma~\ref{lma:separation}
 we know that
 \[
  (\e_{k2^{-n-1}t_0})_\sharp\tilde\ppi \perp (\e_{k2^{-n-1}t_0})_\sharp
  \ppi\res\{\gamma\,:\, \ell(\gamma) \notin [l,l+\epsilon] \text{ and }\gamma_{k2^{-n-1}t_0} \in A\}.
 \]

 Therefore, in proving \eqref{eq:decayestimate} we may separately
 deal with the parts of the measure where all the geodesics have
 lengths in an interval $[l,l+\epsilon]$. Take now
 a Borel set $A\subset X$ such that for the measure
 $\hat\ppi = \ppi\res\{\gamma\,:\,l \le \ell(\gamma) \le l+\epsilon\text{ and }\gamma_{k2^{-n-1}t_0} \in A\}$
 we have $\hat{M} = \hat\ppi(\geo(X))>0$.

 Suppose that the measure 
 \[
  \tilde\nu \in \mathcal{I}_\frac12\left(\frac1{\hat{M}}(\e_{(k-1)2^{-n-1}t_0})_\sharp\hat\ppi,
  \frac1{\hat{M}}(\e_{(k+1)2^{-n-1}t_0})_\sharp\hat\ppi\right)  
 \]
 satisfies the convexity inequality \eqref{eq:CDdef}. Then
 \begin{align*}
  \entv(\tilde\nu) \le ~& \frac12\entv(\hat{M}^{-1}(\e_{(k-1)2^{-n-1}t_0})_\sharp\hat\ppi) + \frac12\entv(\hat{M}^{-1}(\e_{(k+1)2^{-n-1}t_0})_\sharp\hat\ppi)\\
   & + \frac{K^-}{8}W_2^2\left(\hat{M}^{-1}(\e_{(k-1)2^{-n-1}t_0})_\sharp\hat\ppi,\hat{M}^{-1}(\e_{(k+1)2^{-n-1}t_0})_\sharp\hat\ppi\right)\\
   \le ~& \frac12 \log{\frac{C}{\hat{M}}} -\frac14((1-(k-1)2^{-n-1})(c_2 - K^-(k-1)2^{-n-1}t_0^2)l^2) \\
   &+ \frac12 \log{\frac{C}{\hat{M}}} -\frac14((1-(k+1)2^{-n-1})(c_2 - K^-(k+1)2^{-n-1}t_0^2)l^2) \\
    & + \frac{K^-}{8}(2^{-n}t_0(l+\epsilon))^2\\
   = ~& \log{\frac{C}{\hat{M}}} - \frac12((1-k2^{-n-1})(c_2 - K^-k2^{-n-1}t_0^2)l^2) + \frac{K^-}{8}2^{-2n}t_0^2(2l+\epsilon)\epsilon.
 \end{align*}
 Proposition \ref{prop:spreadtolimit} then gives
 \[
  \rho_t(\gamma_t) \le C{\rm exp}\bigl[-\frac12(1-\frac{t}{t_0})(c_2 - K^-tt_0)l^2 + \frac{K^-}{8}2^{-2n}t_0^2(2l+\epsilon)\epsilon\bigr]
 \]
 for $\ppi$-a.e. $\gamma \in \geo(X)$ with
 $\ell(\gamma) \in [l,l+\epsilon]$. By letting $\epsilon \downarrow 0$ we then obtain
 \eqref{eq:decayestimate} for $t = k2^{-n-1}t_0$.

 Notice that the estimate \eqref{eq:decayestimate} gives
 $ \rho_t(\gamma_t) \le C{\rm exp}\bigl[-\frac12(1-\frac{t}{t_0})(c_2 - K^-tt_0)\ell^2(\gamma)\bigr] \le C$
 for all $t \in [0,t_0]$ for $\ppi$-a.e $\gamma \in \geo(X)$,
 which is equivalent to \eqref{eq:uniformbound}.
\end{proof}

\section{Convergence results}\label{sec:auxiliary}

This section is devoted to the proof of some auxiliary convergence results. The first one deals with
entropy convergence. Recall the notation $\Wgh(x)=\sfd(x,x_0)$.

\begin{lemma}\label{lem:ConvEnt}
Let $f_n\mm$, $f\mm$ be positive finite measures in $X$. If $f_n \up f$ $\mm$-a.e. and 
$\int f\Wgh^2\,\d\mm<\infty$, then 
\begin{equation}\label{eq:ConvEnt}
\int_X f_n \log f_n \,\d \mm \to \int_X f\log f\, \d \mm.
\end{equation}
The same conclusion holds if  $f_n \down f$ $\mm$-a.e. and $\int f_1\Wgh^2\,\d\mm<\infty$.
\end{lemma}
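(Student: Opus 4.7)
The strategy is to reduce to a probability space via the change-of-reference-measure formula~\eqref{eq:changeentropy}, and then exploit two facts: in a probability space the negative part of $s\log s$ is automatically dominated by the constant $e^{-1}$; while on $[1,\infty)$ the map $s\mapsto s\log s$ is monotone, which gives the convergence of the positive parts.

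Fix $c>0$ with $z:=\int_X e^{-c\Wgh^2}\,\d\mm<\infty$ (this is the growth condition~\eqref{eq:growthcond}, in force throughout the paper), and set $\tilde\mm:=z^{-1}e^{-c\Wgh^2}\mm\in\Probabilities{X}$. Writing $h_n:=f_n\,z\,e^{c\Wgh^2}$ and $h:=f\,z\,e^{c\Wgh^2}$, so that $h_n\tilde\mm=f_n\mm$ and $h\tilde\mm=f\mm$, a direct computation analogous to~\eqref{eq:changeentropy} gives
\[
\int_X f_n\log f_n\,\d\mm
=\int_X h_n\log h_n\,\d\tilde\mm
\;-\;c\int_X f_n\Wgh^2\,\d\mm
\;-\;(\log z)\int_X f_n\,\d\mm.
\]
The last two integrals on the right converge to their analogues with $f$: by monotone convergence in the increasing case, and by dominated convergence with dominators $f_1\Wgh^2$ and $f_1$ (both in $L^1(\mm)$) in the decreasing case. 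So it remains to prove that $\int_X h_n\log h_n\,\d\tilde\mm\to\int_X h\log h\,\d\tilde\mm$ in $[-\infty,+\infty]$, when $h_n\to h$ monotonically in the probability space $(X,\tilde\mm)$.

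Split $s\log s=(s\log s)^+-(s\log s)^-$. For the negative part, $\sup_{s\in[0,1]}(-s\log s)=e^{-1}$, so $0\le (h_n\log h_n)^-\le e^{-1}$ on all of $X$; pointwise convergence together with bounded convergence in $(X,\tilde\mm)$ yields $\int (h_n\log h_n)^-\,\d\tilde\mm\to\int(h\log h)^-\,\d\tilde\mm$. For the positive part one uses that $s\mapsto s\log s$ is nondecreasing on $[1,\infty)$ and vanishes on $[0,1]$ after passing to the positive part. In the increasing case $h_n\uparrow h$ this gives the pointwise bound $(h_n\log h_n)^+\le (h\log h)^+$, and pointwise convergence, so dominated convergence applies when $\int(h\log h)^+\,\d\tilde\mm<\infty$, and Fatou's lemma handles the case $\int(h\log h)^+\,\d\tilde\mm=+\infty$. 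In the decreasing case $h_n\downarrow h$, the same monotonicity shows that $(h_n\log h_n)^+$ is itself pointwise monotonically decreasing to $(h\log h)^+$, so Beppo Levi's theorem for decreasing sequences gives the convergence of the integrals.

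\textbf{Main obstacle.} The most delicate point is the positive part in the decreasing case: one needs $(h_1\log h_1)^+\in L^1(\tilde\mm)$ to directly apply monotone convergence for decreasing sequences. When this fails, $\int(h_n\log h_n)^+\,\d\tilde\mm$ is nondecreasing as $n$ grows (since $(h_n\log h_n)^+$ is pointwise decreasing) and either it stabilizes to a finite limit equal to $\int(h\log h)^+\,\d\tilde\mm$ or it is identically $+\infty$; a standard truncation of $f_1$ by $f_1\wedge M$ followed by $M\uparrow\infty$, combined with the already proved increasing case, allows one to conclude in this borderline situation.
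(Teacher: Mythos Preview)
Your proof is correct and follows essentially the same approach as the paper: both reduce to a probability reference measure via \eqref{eq:changeentropy} and then decompose $s\log s$ into a bounded piece (handled by bounded convergence) and a monotone unbounded piece (handled by monotone convergence); the paper splits at $s=\rme^{-1}$ to make both pieces nondecreasing, while you split at $s=1$ into positive and negative parts, which is equivalent. One small slip: in your ``Main obstacle'' paragraph you write that $\int(h_n\log h_n)^+\,\d\tilde\mm$ is \emph{nondecreasing} in $n$, but since $(h_n\log h_n)^+$ is pointwise decreasing the integrals are \emph{nonincreasing}; the paper also glosses over this borderline (it simply says ``the argument is the same''), and in all applications in the paper the densities are bounded so $(h_1\log h_1)^+\in L^1(\tilde\mm)$ and the issue does not arise.
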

\begin{proof} Assume first that $\mm$ is a finite measure. Let us first consider the case $f_n \up f$. 
Observe that the function $t\mapsto t \log t$  is decreasing on $[0,\rme^{-1}]$ and  increasing on $[\rme^{-1},\infty)$; we write it as the difference
$\phi_1-\phi_2$, with
$$
\phi_1(t):=
\begin{cases}
-\frac{1}{\rme} &\text{if $t\in [0,\frac{1}{\rme}]$;}\\
t\log t & \text{if $t\geq\frac{1}{\rme}$,}
\end{cases}\qquad\qquad
\phi_2(t):=
\begin{cases}
-\frac{1}{\rme}-t\log t &\text{if $t\in [0,\frac{1}{\rme}]$;}\\
0 & \text{if $t\geq\frac{1}{\rme}$.}
\end{cases}
$$
Notice that $\phi_i$ are nondecreasing and bounded from below. Therefore
we can apply the monotone convergence theorem for 
$\int\phi_i(f_n)\,\d\mm$ to conclude. In the case $f_n\down f$ the argument is the same.

In the general $\sigma$-finite case we use
\eqref{eq:changeentropy} to reduce ourselves to the previous case, noticing that our assumptions
on $f_n$ imply $\int f_n\Wgh^2\,\d\mm\to\int f\Wgh^2\,\d\mm<\infty$.
\end{proof}

Recall that, according to Definition~\ref{def:wug} and \eqref{eq:convention}, 
the space $\calW$ consists of $\mm$-measurable functions having a weak upper gradient in $L^2(X,\mm)$.

\begin{lemma}\label{lem4.4revised}
Let $x_0\in X$, $\mu=f\mm,\,\sigma=g\mm\in\probt{X}$ with $f(x)\leq c_1\rme^{-c_2\sfd^2(x,x_0)}$ for some $c_1,\,c_2>0$, 
$\inf_{B_R(x_0)}f>0$ for
all $R>0$ and $g\in L^\infty(X,\mm)$ with bounded support. 
Let $\ppi\in\gopt(\mu,\sigma)$ be a good geodesic given by Theorem~\ref{thm:goodgeodesics}. Then:
\begin{itemize}
\item[(1)] For $h\in\calW$ satisfying $\weakgrad{h}\in L^2(X,\mu)$ and
\begin{equation}\label{assNablaf}
\weakgrad{h}^2(x) \leq C(1+ \sfd^2(x,x_0)) \quad \text{for any $x\in B^c_{R_*}(x_0)$}
\end{equation} 
for some $C,\,R_*>0$, the following holds (understanding the integrals on $\geo(X)$)
\begin{equation}
\limsup_{t \down 0} \int\left| \frac{h(\gamma_t)-h(\gamma_0)}{\sfd(\gamma_t, \gamma_0)}\right|^2 \d \ppi(\gamma) \leq \int \weakgrad{h}^2(\gamma_0) \,
\d\ppi(\gamma).
\end{equation} 
\item[(2)]  For all Kantorovich potentials $\varphi$ relative to $(\mu,\sigma)$ with $|D \varphi|$ having linear growth
one has
\begin{equation}\label{derKant} 
\lim_{t\downarrow 0} \frac{\varphi(\gamma_0)-\varphi(\gamma_t)}{\sfd(\gamma_0, \gamma_t)}= \lim_{t\downarrow 0} \frac{\sfd(\gamma_0,\gamma_t)}{t}=\weakgrad{\varphi}(\gamma_0) \quad \text{in } L^2(C([0,1];X), \ppi).
\end{equation}
\end{itemize} 
\end{lemma}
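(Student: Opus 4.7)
The plan is as follows. By Theorem~\ref{thm:goodgeodesics} we have $\sup_{s\in[0,t_0]}\|\rho_s\|_{L^\infty(\mm)}=:C^*<\infty$, so after time-rescaling by $t_0$ the plan $\ppi$ qualifies as a $2$-test plan relative to $\mm$; moreover the density estimate \eqref{eq:decayestimate} gives a uniform-in-$s$ Gaussian tail decay on $\rho_s$, which I will use repeatedly for uniform integrability.

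For part (1), the Sobolev regularity along curves (Remark~\ref{rem:charaweakgrad}) applied to $h\in\calW$ yields, for $\ppi$-a.e. $\gamma$, absolute continuity of $s\mapsto h(\gamma_s)$ with $|\tfrac{d}{ds}h(\gamma_s)|\leq\weakgrad{h}(\gamma_s)\ell(\gamma)$. Combining $h(\gamma_t)-h(\gamma_0)=\int_0^t\tfrac{d}{ds}h(\gamma_s)\,ds$ with Cauchy--Schwarz and the identity $\sfd(\gamma_0,\gamma_t)=t\ell(\gamma)$, I obtain
\[
\left|\frac{h(\gamma_t)-h(\gamma_0)}{\sfd(\gamma_0,\gamma_t)}\right|^2 \leq \frac{1}{t}\int_0^t\weakgrad{h}^2(\gamma_s)\,ds.
\]
Integrating against $\ppi$ and using Fubini reduces the claim to right-continuity at $s=0$ of $s\mapsto\int_X\weakgrad{h}^2\,d\mu_s$. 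I would prove this by splitting $X=B_R\cup(X\setminus B_R)$: on $B_R$, narrow convergence $\mu_s\to\mu_0$ together with the uniform bound $\rho_s\leq C^*$ upgrades to weak-$*$ convergence of $\rho_s$ in $L^\infty(B_R,\mm)$, enough to test against $\weakgrad{h}^2\chi_{B_R}\in L^1(B_R,\mm)$ (integrability on $B_R$ follows from $\weakgrad{h}^2 f\in L^1(\mm)$ and the positivity $\inf_{B_R}f>0$); outside $B_R$, the quadratic growth \eqref{assNablaf} combined with the Gaussian decay of $\rho_s$ makes $\int_{X\setminus B_R}\weakgrad{h}^2\,d\mu_s$ uniformly small.

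For part (2), I set $F_t(\gamma):=[\varphi(\gamma_0)-\varphi(\gamma_t)]/\sfd(\gamma_0,\gamma_t)$ and note $\sfd(\gamma_0,\gamma_t)/t=\ell(\gamma)$. Since $(\e_0,\e_1)_\sharp\ppi$ is optimal, \eqref{eq:itforza1} from Proposition~\ref{prop:goodKant} gives $\varphi(\gamma_0)+\psi(\gamma_1)=\tfrac12\ell^2(\gamma)$ for $\ppi$-a.e. $\gamma$; combined with the $c$-concavity bound $\varphi(\gamma_t)+\psi(\gamma_1)\leq\tfrac12(1-t)^2\ell^2(\gamma)$, this forces the pointwise lower bound $F_t(\gamma)\geq\ell(\gamma)(1-t/2)$, hence $\liminf_{t\to 0}\int F_t^2\,d\ppi\geq\int\ell^2\,d\ppi=W_2^2(\mu,\sigma)$. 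For a matching $L^2$-upper bound I run the Cauchy--Schwarz/Fubini scheme of part (1) on $\varphi$ with its pointwise upper gradient $|D\varphi|$: $\varphi$ is locally Lipschitz by Proposition~\ref{prop:goodKant}, so $|\varphi(\gamma_t)-\varphi(\gamma_0)|\leq\int_0^t|D\varphi|(\gamma_s)\ell\,ds$ holds along every absolutely continuous curve (bypassing the need for $\varphi\in\calW$ globally), and the linear growth of $|D\varphi|$ against the Gaussian decay of $\rho_s$ supplies the right-continuity step, yielding $\limsup_{t\to 0}\int F_t^2\,d\ppi\leq\int_X|D\varphi|^2\,d\mu_0$.

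The main obstacle is to upgrade this to the sharp inequality $\int|D\varphi|^2\,d\mu_0\leq\int\ell^2\,d\ppi$, equivalently the identification $\weakgrad{\varphi}(\gamma_0)=\ell(\gamma)$ $\ppi$-a.e.; my strategy is to refine the ascending-slope bound $|D^+\varphi|(\gamma_0)\leq\ell(\gamma)$ from \eqref{eq:itforza} into an $L^2$-control of $\weakgrad{\varphi}$ against $\mu_0$, using the Lipschitz approximation of Theorem~\ref{thm:equivalence}(b) together with the $c$-concave structure of $\varphi$ (which forces the minimal weak upper gradient to be dominated by the ascending slope). Once $\int F_t^2\,d\ppi\to\int\ell^2\,d\ppi$ is in hand, Cauchy--Schwarz squeezed with the pointwise lower bound forces $\int F_t\,\ell\,d\ppi\to\int\ell^2\,d\ppi$, and the expansion
\[
\int (F_t-\ell)^2\,d\ppi = \int F_t^2\,d\ppi - 2\int F_t\,\ell\,d\ppi + \int\ell^2\,d\ppi \;\longrightarrow\; 0
\]
delivers the required $L^2(\ppi)$-convergence of $F_t$ to $\weakgrad{\varphi}(\gamma_0)=\ell(\gamma)$, completing both equalities of \eqref{derKant} simultaneously.
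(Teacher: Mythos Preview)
Your treatment of part (1) matches the paper's argument: the curve-wise Cauchy--Schwarz bound, Fubini, and the $B_R$/$B_R^c$ splitting (uniform $L^\infty$ density bound inside, $2$-uniform integrability of the second moments outside) are exactly what the paper does.

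In part (2) the step you flag as the ``main obstacle'' is a genuine gap that your strategy does not close. Your upper bound is $\int_X |D\varphi|^2\,d\mu$ with the \emph{full} slope, whereas \eqref{eq:itforza} only controls the \emph{ascending} slope $|D^+\varphi|(\gamma_0)\leq\ell(\gamma)$. The claim that $c$-concavity forces $\weakgrad{\varphi}\leq |D^+\varphi|$ is not an elementary consequence of Lipschitz approximation: the inequality $\weakgrad{\varphi}\leq |D\varphi|$ is automatic, but replacing the two-sided slope by the one-sided ascending slope is precisely the content of the Metric Brenier Theorem (Theorem~10.3 in \cite{Ambrosio-Gigli-Savare11}), whose proof uses the $CD(K,\infty)$ condition in an essential way. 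The paper invokes that theorem directly to obtain $\weakgrad{\varphi}=|D^+\varphi|=L$ $\mm$-a.e., which simultaneously yields the pointwise $\ppi$-a.e.\ limit of the difference quotients (liminf from $c$-concavity, limsup from the definition of $|D^+\varphi|$) and allows part (1), applied to $h=\varphi$ with the \emph{weak} gradient, to deliver the matching $L^2$-upper bound; strong convergence then follows from weak convergence (via Mazur) plus convergence of norms. Note also an internal mismatch in your outline: you deliberately bypass $\varphi\in\calW$ when deriving the upper bound, yet your proposed fix operates on $\weakgrad{\varphi}$, which can only help once the upper bound itself is restated in terms of the weak gradient rather than $|D\varphi|$.
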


\begin{proof} (1) Call $f_t$ the density of $(\e_t)_\sharp \ppi$, i.e. $(\e_t)_\sharp \ppi = f_t\mm$; we know that for $t>0$ sufficiently small,
say $t\in (0,t_0)$, $f_t$ exists and there exists a constant $C_*$ such that $f_t\leq C_*$ $\mm$-a.e. in $X$ for all $t\in (0,t_0)$.
By definition of weak upper gradient, for any $t \in (0,t_0)$ and $\ppi$-a.e. $\gamma$ one has
\begin{equation} \nonumber
\left| \frac{h(\gamma_t)-h(\gamma_0)}{\sfd(\gamma_t,\gamma_0)} \right|^2 \leq 
\frac{\left(\int_0^t \weakgrad{h}(\gamma_s)|\dot{\gamma}_s| \d s \right)^2 }
{\sfd ^2(\gamma_t,\gamma_0)} \leq \frac 1 t \int_0^t \weakgrad{h}^2(\gamma_s) \d s,
\end{equation}
therefore applying twice Fubini's theorem and using the identity $(\e_t)_\sharp\ppi=f_t\mm$ we get
\begin{equation}
\int \left| \frac{h(\gamma_t)-h(\gamma_0)}{\sfd(\gamma_t,\gamma_0)} \right|^2 \d \ppi(\gamma) \leq
 \int \left( \frac 1 t \int_0^t \weakgrad{h}^2(\gamma_s) \d s \right) \d \ppi(\gamma) 
 = \int_X \left(\frac{1}{t} \int_0^t f_s \d s \right) \weakgrad{h}^2 \,\d \mm.
\end{equation}
The conclusion of the lemma follows once  the following claim is proved: 
\begin{equation}\label{claim:f}
\lim_{t\down 0} \int_X \left(\frac{1}{t} \int_0^t f_s \d s \right) \weakgrad{h}^2\, \d \mm= \int_X \weakgrad{h}^2 f\,\d \mm.
\end{equation}

In order to prove the claim we use both the uniform $L^\infty$ estimates on $f_t$ and the
$2$-uniform integrability of $\Wgh^2$ w.r.t. $f_t\mm$. Notice first that the local boundedness of 
$f^{-1}$ implies $\weakgrad{h}^2\in L^1(B_R(x_0),\mm)$ for all $R>0$; moreover 
\begin{equation}\label{eq:sigmat}
\bar{f}_t:=\left(\frac{1}{t} \int_0^t f_s \d s \right)\to f \quad \text{in duality with } L^1(B_R(x_0),\mm).
\end{equation}
Indeed the weak convergence $f_t \mm \to f\mm$ implies the weak convergence of $\bar{f}_t$ to $f$ in the duality with
$C_b(B_R(x_0))$; then \eqref{eq:sigmat} follows by the uniform $L^\infty$ bound on $\bar{f}_t$.
Second, observe that \eqref{assNablaf} gives
\begin{eqnarray}
\left|\int_X \bar{f}_t \weakgrad{h}^2 \,\d \mm-\int_{B_R(x_0)} \bar{f}_t \weakgrad{h}^2\, \d \mm \right| &\leq&  
\frac{C}{t} \int_0^t\int_{B_R^c(x_0)} (1+\sfd^2(x,x_0)) f_s \,\d \mm\d s \label{ClaimPart1}\\
&& \to 0 \quad \text{as } R\to \infty \text{ uniformly in } t \in (0,t_0)\nonumber;
\end{eqnarray}
the second line comes from the observation that the geodesic $(f_s \mm)_{s\in [0,1]}$ is a compact subset in $(\probt X, W_2)$,
hence tight and 2-uniformly integrable (see \cite[Proposition~7.1.5]{Ambrosio-Gigli-Savare08}).
The claim \eqref{claim:f} follows then combining \eqref{ClaimPart1} and \eqref{eq:sigmat}.

\noindent
(2) Observe we are under the assumptions of the Metric Brenier Theorem 10.3 in \cite{Ambrosio-Gigli-Savare11}, therefore there exists
a Borel function $L$ satisfying
$L(\gamma_0):=\sfd(\gamma_0,\gamma_1)$ for $\ppi$-a.e. $\gamma \in \geo(X)$ and, in addition,
\begin{equation}\label{eq:metrbren}
\weakgrad{\varphi}(x)=|D^+ \varphi| (x)= L(x) \quad \text{for $\mm$-a.e. $x\in X$.}
\end{equation}
It trivially follows that for $\ppi$-a.e. $\gamma \in\geo(X)$
$$\weakgrad{\varphi}(\gamma_0)=\sfd(\gamma_0,\gamma_1)=\frac{\sfd(\gamma_0,\gamma_t)}{t} \quad
\text{for every $t\in (0,1)$.}$$ 
The missing part is the $L^2$ convergence of difference quotients, proved and stated in  \cite{Ambrosio-Gigli-Savare11} under
a different set of assumptions: we adapt the argument to our case, where $|D \varphi|$ has linear growth. 
Since by optimality we have for $\ppi$-a.e. $\gamma$ that
$$\varphi(\gamma_0)+\varphi^c(\gamma_1)=\frac{\sfd^2(\gamma_0,\gamma_1)}{2}, \qquad \varphi(\gamma_t)
+\varphi^c(\gamma_1)\leq \frac{\sfd^2(\gamma_t,\gamma_1)}{2}  ,$$
we get with a subtraction that
$$\varphi(\gamma_0)-\varphi(\gamma_t)\geq \frac{1-(1-t)^2 }{2}\sfd^2(\gamma_0,\gamma_1)
=\frac{2t-t^2}{2} \sfd^2(\gamma_0,\gamma_1) \quad \text{for $\ppi$-a.e. $\gamma$.}$$
Therefore, dividing both sides by $\sfd(\gamma_t,\gamma_0)=t\sfd(\gamma_1,\gamma_0)$, for $\ppi$-a.e. $\gamma$ one has
\begin{equation}\label{liminfKant}
\liminf_{t \down 0} \frac{\varphi(\gamma_0)-\varphi(\gamma_t)}{\sfd(\gamma_0,\gamma_t)}\geq
 \sfd(\gamma_0,\gamma_1)=\weakgrad{\varphi} (\gamma_0).
\end{equation}
On the other hand, by definition of ascending slope 
\begin{equation}\label{limsupKant}
\limsup_{t\down 0} \frac{\varphi(\gamma_0)-\varphi(\gamma_t)}{\sfd(\gamma_0,\gamma_t)}\leq 
|D^+ \varphi|(\gamma_0).
\end{equation}
So, combining \eqref{eq:metrbren} and \eqref{liminfKant} with \eqref{limsupKant} we get 
\begin{equation}\label{eq:ConvKanta.e.}
\lim_{t \down 0} \frac{\varphi(\gamma_0)-\varphi(\gamma_t)}{\sfd(\gamma_0,\gamma_t)} = \weakgrad{\varphi}(\gamma_0) \quad 
\text{for $\ppi$-a.e. $\gamma$.}
\end{equation}
Now we claim that 
\begin{equation}\label{eq:weakConvKant}
\frac{\varphi(\gamma_0)-\varphi(\gamma_t)}{\sfd(\gamma_0,\gamma_t)} \rightharpoonup \weakgrad{\varphi}\circ\e_0 
\qquad \text{weakly in } L^2(\geo(X),\ppi).
\end{equation}
Since by assumption $|D \varphi|$ has linear growth, by part (1) of the present lemma we have 
\begin{equation}\label{eq:part1present}
\limsup_{t\down 0}\int \left |\frac{\varphi(\gamma_0)-\varphi(\gamma_t)}{\sfd(\gamma_0,\gamma_t)}\right |^2\,\d\ppi\leq
\int \weakgrad{\varphi}^2(\gamma_0)\,\d\ppi. 
\end{equation}
If $\psi$ is a weak limit point of the difference quotients as $t\downarrow 0$, by Mazur's lemma a sequence of convex combinations of 
these difference quotients strongly converges in $L^2(\geo(X),\ppi)$ to $\psi$. Since a further subsequence converges $\ppi$-a.e., from 
\eqref{eq:ConvKanta.e.} we obtain that $\psi=|D^+ \varphi|$. By weak compactness, the claim follows.

We conclude by observing that the lower semicontinuity of the norm under weak convergence together with \eqref{eq:part1present} 
ensure convergence of the $L^2(\geo(X),\ppi)$ norms.
Since in Hilbert spaces weak convergence and convergence of the norms give strong convergence, the lemma is proved.
\end{proof}

Our third result deals with weak convergence in the weighted Cheeger space: it will be applied to sequences
of Kantorovich potentials. In this and in the next lemma we assume that $\C$ is quadratic, so that 
by Theorem~\ref{thm:weighted} $\C_\eta$ is quadratic whenever $\eta=g\mm\in\probt X$ with $g\in L^\infty(X,\mm)$ and with $\C(\sqrt{g})<\infty$.
Recall that $\mathcal E_\eta$ denotes, according to \eqref{eq:numeriamoanchequesta}, the bilinear form associated to
$\C_\eta$.

\begin{lemma}\label{lem:WeakConvK} 
Let $(X,\sfd,\mm)$ have a quadratic Cheeger energy.
Let $\eta=g\mm\in\ProbabilitiesTwo X$ with $g\in L^\infty(X,\mm)$ and $\C(\sqrt{g})<\infty$. 
Consider a sequence $(f_n)\subset\calW$ with
\begin{equation}\label{assfn}
\sup_{n \in \N} \int_X \weakgrad{f_n}^2\,\d\eta < \infty, \qquad  \sup_{n\in\N} |f_n|(x)\leq C (1+\sfd^2(x,x_0)),
\end{equation}
and assume that $f_n\to f$ $\mm$-a.e. in $X$. Then 
\begin{equation}
\lim_{n\to\infty} \mathcal E_{\eta} (f_n,\log g)= \mathcal E_{\eta}(f,\log g).
\end{equation}
\end{lemma}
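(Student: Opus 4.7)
The plan is to realize the convergence as weak convergence in a suitable Hilbert space, exactly in the spirit of the last paragraph of the proof of Theorem~\ref{thm:weighted}. The key observation is that, by Theorem~\ref{thm:change}(b) combined with the chain rule $\weakgrad{\sqrt g}=\weakgrad g/(2\sqrt g)$, one has
\begin{equation*}
 \int_X|D\log g|_{w,\eta}^2\,\d\eta
 =\int_X\frac{\weakgrad g^2}{g}\,\d\mm
 =4\,\C(\sqrt g)<\infty,
\end{equation*}
so that, via Cauchy--Schwarz for the carr\'e du champ $\Gbil{\cdot}{\cdot}_\eta$, the map $h\mapsto\mathcal E_\eta(h,\log g)$ is a continuous linear functional on $\calW_\eta$ equipped with the seminorm $\bigl(\mathcal E_\eta(\cdot,\cdot)\bigr)^{1/2}$, bounded by $2\,\C(\sqrt g)^{1/2}$.

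Next, setting $\Wgh(x)=\sfd(x,x_0)$ and $\rho':=(1+\Wgh^2)^{-2}\eta\in\prob{X}$, I would consider the vector space
\begin{equation*}
 H:=L^2(X,\rho')\cap\calW_\eta,\qquad
 \langle h,h'\rangle_H:=\int_X hh'\,\d\rho'+\mathcal E_\eta(h,h').
\end{equation*}
A truncation argument with $T_N(t):=\max\{-N,\min\{t,N\}\}$, using the chain rule \eqref{eq:chainrule} (which gives $|D T_Nh|_{w,\eta}\le|Dh|_{w,\eta}$) together with the $L^2(X,\eta)$-lower semicontinuity of $\C_\eta$ and monotone convergence in $N$, yields the $L^2(X,\rho')$-lower semicontinuity of $\C_\eta$; this is entirely analogous to the final step of the proof of Theorem~\ref{thm:weighted} and makes $H$ a Hilbert space. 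The pointwise bound in \eqref{assfn} then gives $\int f_n^2\,\d\rho'\le C^2$ (since $\eta$ is a probability), while $\|\weakgrad{f_n}\|_{L^2(\eta)}^2=\mathcal E_\eta(f_n,f_n)$ is bounded by the first half of \eqref{assfn} together with Theorem~\ref{thm:change}(a), so $(f_n)$ is uniformly bounded in $H$. Since $f_n\to f$ $\mm$-a.e., hence $\eta$-a.e. and $\rho'$-a.e., the pointwise bound passes to the limit giving $|f|\le C(1+\Wgh^2)$, so $f\in L^2(X,\rho')$, and dominated convergence yields $f_n\to f$ strongly in $L^2(X,\rho')$.

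By reflexivity of $H$, from any subsequence of $(f_n)$ I can extract a further subsequence converging weakly in $H$; the strong $L^2(X,\rho')$-convergence identifies its limit with $f$ (in particular $f\in\calW_\eta$ by the lower semicontinuity of $\C_\eta$), and a standard subsequence argument gives $f_n\rightharpoonup f$ weakly in $H$. Applying the continuous linear functional $h\mapsto\mathcal E_\eta(h,\log g)$ then yields the desired convergence. The main obstacle I anticipate is the completeness of $H$, i.e.\ the $L^2(X,\rho')$-lower semicontinuity of $\C_\eta$: because $\rho'$ is strictly smaller than $\eta$, one cannot invoke $L^2(X,\eta)$-lower semicontinuity directly, and the truncation plus chain-rule step must be carried out carefully to reduce to sequences uniformly bounded in $L^\infty$, on which the weights $\rho'$ and $\eta$ become comparable on a set of finite measure.
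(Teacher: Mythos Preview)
Your proposal is correct and follows essentially the same approach as the paper's proof: introduce a weighted measure to compensate the quadratic growth of $f_n$, build the Hilbert space $H=L^2\cap\calW_\eta$, verify its completeness via the truncation argument reducing $L^2$-lower semicontinuity of $\C_\eta$ with respect to the small weight to the known one with respect to $\eta$, and then pass from boundedness plus strong $L^2$-convergence to weak $H$-convergence and test against the continuous functional $h\mapsto\mathcal E_\eta(h,\log g)$. The only cosmetic difference is that the paper uses the weight $(1+\Wgh^2)^{-1}\eta$ (and then uses $\eta\in\probt X$ to bound $\int f_n^2\,\d\tilde\eta$), whereas you use $(1+\Wgh^2)^{-2}\eta$ so that the bound $\int f_n^2\,\d\rho'\le C^2$ follows directly from $\eta\in\prob X$; both choices work.
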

\begin{proof} We argue as in Theorem~\ref{thm:weighted}. Let us consider the weighted measure
$$
\tilde\eta:=\frac{1}{1+\Wgh^2}\eta
$$
and the corresponding weighted Sobolev space $H:=L^2(X,\tilde\eta)\cap\calW_\eta$, endowed with the scalar product
$$\langle f,g\rangle_H:=\int_X fg \, \d\tilde\eta + \mathcal E_{\eta} (f,g).$$
Observe that, since $L^2(X,\tilde\eta)$ is a Hilbert space, in order to check the completeness of the norm $\|\cdot\|_H$ induced by this scalar product
 it is enough to check the lower semicontinuity of $\|\cdot\|_H$ with respect to strong convergence in $L^2(X,\tilde\eta)$;  but this is clear since ${\C} _\eta$ is lower semicontinuous with respect to $L^2(X,\eta)$ convergence and, on sequences uniformly bounded in $L^\infty(X,\eta)$, 
 the finiteness of $\eta$ turns $L^2(X,\tilde\eta)$ convergence into $L^2(X,\eta)$ convergence. By a truncation argument one obtains that ${\C}_\eta$ is
 $L^2(X,\tilde\eta)$-lower semicontinuous. We conclude that $(H,\langle\cdot,\cdot\rangle_H)$ is a Hilbert space 
 (it is even separable, see \cite[Proposition~4.10]{Ambrosio-Gigli-Savare11b}, but we shall not need this fact in the sequel). 

Now since $\eta\in \Probabilities X$, from the second assumption \eqref{assfn} and dominated convergence we have that 
$f_n \to f$ strongly in $L^2(X,\tilde\eta)$. On the other hand, the first assumption in \eqref{assfn} implies that $\|f_n\|_H$ is bounded. 
By reflexivity if follows that $f_n\to f$ weakly in $H$. The conclusion follows by noticing that, since $\C(\sqrt{g})<\infty$, the map
$$h\mapsto \mathcal E_\eta(h,\log g) $$
is linear and continuous from $H$ to $\R$.
\end{proof}

In this last result we estimate how much $\mathcal E_\rho(\log g,\varphi)$ changes under modifications of
the density $g$ of $\rho$.

\begin{lemma}\label{lem:essential} Let $\eta=g\mm,\,\eta'=g'\mm\in\probt{X}$ with $g,\,g'\in L^\infty(X,\mm)$ and $\C(\sqrt{g}),\,\C(\sqrt{g'})$ 
finite. Let 
$\varphi:X\to\R$ be a locally Lipschitz function whose gradient has linear growth. Then, setting $E:=\{g\neq g'\}$, one has
\begin{eqnarray}\label{eq:essential}
&&|\mathcal E_\eta(\log g,\varphi)-\mathcal E_{\eta'}(\log g',\varphi)|\\ &\leq&\biggl(\int_E\weakgrad{\sqrt{g}}^2\,\d\mm\biggr)^{1/2}
\biggl(\int_E \weakgrad{\varphi}^2\,\d\eta\biggr)^{1/2}+\biggl(\int_E\weakgrad{\sqrt{g'}}^2\,\d\mm\biggr)^{1/2}
\biggl(\int_E\weakgrad{\varphi}^2\,\d\eta'\biggr)^{1/2}.\nonumber
\end{eqnarray}
\end{lemma}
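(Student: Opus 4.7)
The plan is to reduce both bilinear forms to carr\'e-du-champ integrals with respect to $\mm$ via Theorem~\ref{thm:weighted}, exploit the locality of $\Gbil{\cdot}{\cdot}$ to cut the resulting integral down to $E$, and conclude by Cauchy--Schwarz combined with the chain rule for weak gradients.

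First, I would extend the identity $\mathcal E_\rho(\log g,\varphi)=\mathcal E(g,\varphi)$ of Theorem~\ref{thm:weighted} from $\varphi$ Lipschitz with bounded support to the present class of $\varphi$ locally Lipschitz with $|D\varphi|$ of linear growth. Approximate $\varphi$ by $\chi_R\varphi$ for a standard $1$-Lipschitz cutoff $\chi_R$ equal to $1$ on $B_R(x_0)$ and supported in $B_{2R}(x_0)$. The quadratic growth of $\varphi$ together with $\eta,\eta'\in\probt X$ yield $\varphi\in L^2(X,\eta)\cap L^2(X,\eta')$ and $\weakgrad{\varphi}\in L^2(X,\eta)\cap L^2(X,\eta')$; moreover, by Theorem~\ref{thm:change}(b), $|D\log g|_{w,\eta}^2\,\d\eta=4\weakgrad{\sqrt g}^2\,\d\mm\in L^1(X,\mm)$, so $\log g\in\calW_\eta$ and the pairing $\mathcal E_\eta(\log g,\cdot)$ is continuous on the weighted Hilbert space $H$ of Lemma~\ref{lem:WeakConvK}. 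Passing $R\to\infty$, both sides of Theorem~\ref{thm:weighted} applied to $\chi_R\varphi$ converge to the expected limits.

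Second, with the identity $\mathcal E_\eta(\log g,\varphi)=\int_X\Gbil{g}{\varphi}\,\d\mm$ (and its analogue for $g'$, $\eta'$) in hand, the locality of the carr\'e-du-champ \eqref{eq:localityGamma} gives $\Gbil{g}{\varphi}=\Gbil{g'}{\varphi}$ $\mm$-a.e.\ on $\{g=g'\}=X\setminus E$, so
\[
\mathcal E_\eta(\log g,\varphi)-\mathcal E_{\eta'}(\log g',\varphi)=\int_E\bigl(\Gbil{g}{\varphi}-\Gbil{g'}{\varphi}\bigr)\,\d\mm.
\]
Applying the triangle inequality, the pointwise bound \eqref{eq:boundGamma} $|\Gbil{f}{h}|\le\weakgrad{f}\,\weakgrad{h}$, the chain rule \eqref{eq:chainrule} in the form $\weakgrad{g}=2\sqrt g\,\weakgrad{\sqrt g}$, and Cauchy--Schwarz together with $g\,\d\mm=\d\eta$, we obtain
\[
\biggl|\int_E\Gbil{g}{\varphi}\,\d\mm\biggr|\le 2\biggl(\int_E\weakgrad{\sqrt g}^2\,\d\mm\biggr)^{1/2}\biggl(\int_E\weakgrad{\varphi}^2\,\d\eta\biggr)^{1/2},
\]
and symmetrically for $g'$, $\eta'$, which yields the desired bound.

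The main obstacle is the first step: extending Theorem~\ref{thm:weighted} to the present $\varphi$ requires simultaneously controlling the pairing against $\log g$, which degenerates on $\{g=0\}$, and the unboundedness of $\varphi$. The key observation that unlocks the argument is the integrability of $\weakgrad{\log g}^2\,\d\eta=4\weakgrad{\sqrt g}^2\,\d\mm$ provided by Theorem~\ref{thm:change}(b); this places $\log g$ in the Hilbert space introduced in the proof of Theorem~\ref{thm:weighted}, and the weak convergence of $\chi_R\varphi\to\varphi$ in that space then delivers the continuity needed to pass to the limit in the identity.
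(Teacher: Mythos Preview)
Your proof is correct and follows essentially the same route as the paper: reduce via Theorem~\ref{thm:weighted} to $\int_X\Gbil{g}{\varphi}\,\d\mm$, use the locality \eqref{eq:localityGamma} to restrict the integral to $E$, and finish with \eqref{eq:boundGamma}, the chain rule $\weakgrad{g}=2\sqrt g\,\weakgrad{\sqrt g}$, and Cauchy--Schwarz. The only cosmetic difference is the order of operations: the paper first reduces to $\varphi$ with bounded support (again via Lemma~\ref{lem:WeakConvK}) and then proves the inequality directly for such $\varphi$, whereas you first extend the identity of Theorem~\ref{thm:weighted} to general $\varphi$ and then estimate. Note also that both your argument and the paper's yield an extra factor $2$ in front of each term on the right-hand side of \eqref{eq:essential}; this harmless constant is dropped in the stated inequality and plays no role in the applications.
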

\begin{proof} By Lemma~\ref{lem:WeakConvK} we can assume, by a simple approximation argument, that $\varphi$ has bounded support.
Under this assumption the quantity to be estimated reduces, thanks to \eqref{eq:transfer1} and \eqref{eq:localityGamma}, to
$$
\left|\int_X\Gbil{\varphi}{g}-\Gbil{\varphi}{g'}\,\d\mm\right|=
\left|\int_E\Gbil{\varphi}{g}-\Gbil{\varphi}{g'}\,\d\mm\right|\leq
\int_E\bigl(\weakgrad{g}\weakgrad{\varphi}+\weakgrad{g'}\weakgrad{\varphi}\bigr)\,\d\mm
$$
and, after dividing and multiplying by $\sqrt{g}$ and $\sqrt{g'}$, we can use H\"older's inequality to provide the result.
\end{proof}

\section{Equivalence of the different formulations of $RCD(K,\infty)$}\label{sec:lastq}

In this section we prove the following result, extending Theorem~\ref{thm:main} to 
$\sigma$-finite metric measure spaces. 

\begin{theorem}\label{thm:main1}
 Let $(X,\sfd,\mm)$ be a metric measure space with $(X,\sfd)$ complete, separable, $\mm$ finite on bounded sets
 and with $\supp\mm=X$. Then the following properties are equivalent.
 \begin{enumerate}
  \item[(i)] $(X,\sfd,\mm)$ is a $CD(K,\infty)$ space and the semigroup $\heatw_t$ on $\ProbabilitiesTwo{X}$ is additive.
  \item[(ii)] $(X,\sfd,\mm)$ is a $CD(K,\infty)$ space and $\C$ is a quadratic form on $L^2(X,\mm)$.
  \item[(iii)] $(X,\sfd,\mm)$ is a length space, \eqref{eq:growthcond} holds and any $\mu \in \ProbabilitiesTwo{X}$ is the starting point of an $EVI_K$
                gradient flow of $\entv$.
 \end{enumerate}
Any metric measure space $(X,\sfd,\mm)$ satisfying these assumptions and one of the equivalent properties (i), (ii), (iii) will be
called ($\sigma$-finite) $RCD(K,\infty)$ space.
\end{theorem}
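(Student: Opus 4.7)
The plan is to follow the three-way equivalence strategy of \cite{Ambrosio-Gigli-Savare11b}, organized as the cycle (ii)$\,\Rightarrow\,$(iii)$\,\Rightarrow\,$(i)$\,\Rightarrow\,$(ii), with the two key improvements provided by this paper: working on $\sigma$-finite $\mm$ and weakening strict $CD(K,\infty)$ to $CD(K,\infty)$. The easy directions are (iii)$\,\Rightarrow\,$(i) and (i)$\,\Rightarrow\,$(ii); the substantial work is in (ii)$\,\Rightarrow\,$(iii), which is what I now focus on.

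For (ii)$\,\Rightarrow\,$(iii) I would first identify a class $\mathcal{D}\subset\ProbabilitiesTwo X$ of measures $\mu_0=f\mm$ with $f\in L^\infty\cap L^2$, $\C(\sqrt f)<\infty$ and Gaussian decay $f(x)\le c_1\rme^{-c_2\sfd^2(x,x_0)}$; this is dense in $\ProbabilitiesTwo X$ thanks to the growth bound \eqref{eq:growthcond}, which automatically holds in $CD(K,\infty)$ spaces. For $\mu_0\in\mathcal D$ set $\mu_t:=({\sf h}_tf)\mm$, which by Theorem~\ref{thm:heatgf} coincides with the $W_2$-gradient flow $\heatw_t(\mu_0)$. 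The energy dissipation identity
$$
{\rm Ent}_\mm(\mu_T)+\int_0^T\int_X\frac{\weakgrad{f_t}^2}{f_t}\,\d\mm\,\d t={\rm Ent}_\mm(\mu_0)
$$
ensures that for a.e.\ $t>0$ one has $\C(\sqrt{f_t})<\infty$, so that the hypotheses of Theorem~\ref{thm:change} and Theorem~\ref{thm:weighted} are satisfied, and also that $\log f_t\in\calW_{\mu_t}$ with $|D\log f_t|_{w,\mu_t}=\weakgrad{f_t}/f_t$.

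The heart of the argument is to produce, for a.e.\ such $t$ and for a dense class of targets $\sigma=g\mm$ with $g$ bounded and compactly supported, the $EVI_K$ inequality \eqref{eq:EVI}. I would combine two one-sided inequalities. For the upper bound
$$
\frac{d^+}{ds}\tfrac12 W_2^2(\mu_s,\sigma)\Big|_{s=t}\le -\mathcal E_{\mu_t}(\log f_t,\varphi_t)
$$
for any Kantorovich potential $\varphi_t$ of linear growth (whose existence is granted by Proposition~\ref{prop:goodKant}), I would differentiate $W_2^2(\mu_s,\sigma)$ via the difference quotient estimate of Lemma~\ref{lem4.4revised}(2), pair it with $\log f_t$ via the density $f_t$, and recognize the limit as the weighted bilinear form $\mathcal E_{\mu_t}(\log f_t,\varphi_t)$ using Theorem~\ref{thm:weighted} and Lemma~\ref{lem4.4revised}(1). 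For the matching lower bound
$$
\mathcal E_{\mu_t}(\log f_t,\varphi_t)\ge {\rm Ent}_\mm(\mu_t)-{\rm Ent}_\mm(\sigma)+\tfrac{K}{2}W_2^2(\mu_t,\sigma),
$$
for at least one Kantorovich potential, I would run the displacement convexity argument of \cite{Ambrosio-Gigli-Savare11b} along a good geodesic from $\mu_t$ to $\sigma$ supplied by Theorem~\ref{thm:goodgeodesics}; the uniform $L^\infty$ density bound \eqref{eq:uniformbound} lets me differentiate the entropy at $s=0^+$ and extract the inner product with $\log f_t$ using the change of reference measure of Theorem~\ref{thm:change}(b). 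Combining the two bounds gives \eqref{eq:EVI} on $\mathcal D\times\mathcal D$; extension of the test measure $\sigma$ to all of $\ProbabilitiesTwo X$ uses the compactness of Kantorovich potentials (Lemma~\ref{lem:GammaConvKant}), the entropy convergence Lemma~\ref{lem:ConvEnt}, the weak-convergence Lemma~\ref{lem:WeakConvK}, and the stability Lemma~\ref{lem:essential} to replace the density of $\mu_t$ without changing $\mathcal E_{\mu_t}(\log f_t,\varphi_t)$ significantly outside the set where the densities coincide. Finally, Proposition~\ref{prop:evipropr}(iii) together with the contraction \eqref{eq:21} extends existence of $EVI_K$ flows from the dense subset $\mathcal D$ to the whole of $\ProbabilitiesTwo X$.

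The remaining two arrows are standard. For (iii)$\,\Rightarrow\,$(i): by Proposition~\ref{prop:evipropr}(i) the $EVI_K$ flow is a metric gradient flow, and the $K$-contraction \eqref{eq:21} together with the length space assumption forces $K$-geodesic convexity of ${\rm Ent}_\mm$ along $W_2$-geodesics, hence $CD(K,\infty)$; additivity of $\heatw_t$ follows from uniqueness and the convex combination property of $EVI_K$ flows. For (i)$\,\Rightarrow\,$(ii): using Theorem~\ref{thm:heatgf} the additivity of $\heatw_t$ transfers to additivity, hence linearity, of the $L^2$ semigroup ${\sf h}_t$; since ${\sf h}_t$ is the gradient flow of $\C$ in $L^2(X,\mm)$, linearity of the flow forces $\C$ to be a quadratic form. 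The main obstacle, as in \cite{Ambrosio-Gigli-Savare11b} but now much more severe, is the derivation of the lower inequality in the absence of any local positive lower bound on $f_t$: it is precisely here that the good geodesics of Theorem~\ref{thm:goodgeodesics}, the weighted-Cheeger identifications of Theorems~\ref{thm:change} and \ref{thm:weighted}, and the convergence results of Section~\ref{sec:auxiliary} are indispensable.
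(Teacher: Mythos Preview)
Your overall architecture matches the paper's, but two of your attributions are off in ways that would break the argument.

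First, Lemma~\ref{lem4.4revised} cannot be used to differentiate $\tfrac12 W_2^2(\mu_s,\sigma)$ along the heat flow: that lemma concerns difference quotients along a Wasserstein geodesic $s\mapsto(\e_s)_\sharp\ppi$, whereas $s\mapsto\mu_s=(\heatl_s f)\mm$ is not a geodesic.  The paper obtains this derivative (in fact as an equality, Theorem~\ref{thm:derw2}) by pure Kantorovich duality: subtract $\tfrac12 W_2^2(\mu_{t_0-h},\sigma)\ge\int\varphi_{t_0}\,\d\mu_{t_0-h}+\int\varphi_{t_0}^c\,\d\sigma$ from the equality at $h=0$, use $h^{-1}(f_{t_0}-f_{t_0-h})\to\Delta f_{t_0}$ in $L^2(X,\mm)$, and integrate by parts via \eqref{eq:transfer1} and \eqref{eq:intbypartsoo}; a separate cut-off estimate (Lemma~\ref{le:luigi}) is needed because $\varphi_{t_0}\notin L^2(X,\mm)$.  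Lemma~\ref{lem4.4revised} enters only in the entropy inequality.

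Second, and this is the essential gap, you cannot apply Theorem~\ref{thm:goodgeodesics} or Lemma~\ref{lem4.4revised} with source measure $\mu_t$: even if the initial $f$ has Gaussian decay, $f_t=\heatl_t f$ need not, and Lemma~\ref{lem4.4revised} additionally requires $\inf_{B_R(x_0)}f_t>0$ for all $R$, which is unavailable.  The paper's Theorem~\ref{Thm:DerEntr} therefore first proves the entropy inequality for the two-parameter approximation $f_{\delta,n}=c_{\delta,n}\bigl[(\chi_n^2 f_t)\vee\delta\rme^{-2c\Wgh^2}\bigr]$ of $f_t$ (Proposition~\ref{Prop1}), which by construction has both Gaussian decay and local positive lower bounds, and then passes to the limit $n\to\infty$, $\delta\downarrow0$.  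Lemmas~\ref{lem:GammaConvKant}, \ref{lem:ConvEnt}, \ref{lem:WeakConvK} and \ref{lem:essential} are used precisely in this limit, i.e.\ to approximate the \emph{source} $\mu_t$, not the target $\sigma$; the extension of $\sigma$ from bounded, compactly supported densities to all of $\probt{X}$ is a much simpler density argument that does not need these tools.
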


Here $\heatw_t$ is the
$W_2$-gradient flow of $\entv$, according to Definition~\ref{def:dissKconv} (which is known to exist and to be unique for any given initial datum in $D({\rm Ent}_\mm)$, see \cite{Gigli10} and \cite{Ambrosio-Gigli-Savare11}), while 
${\sf h}_t$ stands for the gradient flow of $\C$ in $L^2(X,\mm)$ (or, equivalently, the $EVI_0$ gradient flow).
 
Note that the implications (i) to (ii) and (iii) to (i) were already proved by the first two
authors with Savar\'e in \cite{Ambrosio-Gigli-Savare11b}, because the same proof works in the $\sigma$-finite
setting. The key
implication from (ii) (or (i)) to (iii) is given by the derivative of quadratic optimal transport distance along the heat flow
and of the entropy
along a geodesic, estimated in the next two subsections. Consequently we shall always assume in this section
that $\C$ is quadratic.

We denote by $\Delta$ the infinitesimal generator of the linear semigroup ${\sf h}_t$, so that 
$$
\frac{\d}{\d t}{\sf h}_tf=\Delta {\sf h}_t\qquad\text{for a.e. $t>0$.}
$$
Also, since $\C$ is quadratic, $\Delta$ is related to the bilinear form $\mathcal E$ in \eqref{eq:numeriamoanchequesta}
by
\begin{equation}\label{eq:intbypartsoo}
\int_X g\Delta f\,\d\mm=\mathcal E(f,g)\qquad\forall g\in\calW\cap L^2(X,\mm),\,\,f\in D(\Delta).
\end{equation}

One of the main result of the work of the first two authors with Savar\'e \cite{Ambrosio-Gigli-Savare11} has been
the following identification theorem in $CD(K,\infty)$, see  (8.5), Theorem~8.5 and
Theorem~9.3(iii) therein.

\begin{theorem}[The heat flow as gradient flow]\label{thm:heatgf}
Let $(X,\sfd,\mm)$ be a $CD(K,\infty)$ space and let $f\in
L^2(X,\mm)$ be such that $\mu=f\mm\in\probt{X}$. 
Then $\heatw_t\mu={\sf h}_tf\mm$ for all $t\geq 0$, $t\mapsto\entv(\heatw_t\mu)$ is
locally absolutely continuous in $[0,\infty)$, and
\begin{equation}\label{eq:edissrateflow}
-\frac{\d}{\d
t}\entv(\heatw_t\mu)=|\dot{\heatw_t\mu}|^2=\int_{\{{\sf h}_tf>0\}}\frac{\weakgrad
{{\sf h}_tf}^2}{{\sf h}_tf}\,\d\mm\qquad\text{for a.e.~$t>0$.}
\end{equation}
\end{theorem}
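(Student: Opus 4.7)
The plan is to prove the identification through the \emph{energy dissipation equality} (EDE) characterization of the $W_2$-gradient flow of $\entv$. Set $f_t:={\sf h}_tf$ and $\tilde\mu_t:=f_t\mm$. The goal is to show that $\tilde\mu_t$ satisfies the EDE for $\entv$,
\[
\entv(\mu)=\entv(\tilde\mu_t)+\tfrac12\int_0^t|\dot{\tilde\mu}_s|^2\,\d s+\tfrac12\int_0^t|D^-\entv|^2(\tilde\mu_s)\,\d s\qquad\forall t\geq 0,
\]
then invoke uniqueness of EDE solutions of $\entv$ in $(\probt{X},W_2)$ under $CD(K,\infty)$ to conclude $\tilde\mu_t=\heatw_t\mu$, reading off \eqref{eq:edissrateflow} from the identities produced along the way. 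A preliminary step verifies that ${\sf h}_t$ is positivity and mass preserving (being the Markovian semigroup generated by the closed quadratic form $\C$), that $\tilde\mu_t\in\probt{X}$ locally uniformly in $t$ (propagating second moments via the growth bound \eqref{eq:growthcond}, which holds in $CD(K,\infty)$ spaces by \cite[Thm~4.24]{Sturm06I}), and that $f_t\in L^\infty\cap\calW\cap D(\Delta)$ for $t>0$ by parabolic regularization.

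The first main step is the entropy--Fisher dissipation identity
\[
\entv(\mu)-\entv(\tilde\mu_t)=\int_0^t F(f_s)\,\d s,\qquad F(g):=\int_{\{g>0\}}\frac{\weakgrad{g}^2}{g}\,\d\mm,
\]
obtained by differentiating $\entv(\tilde\mu_t)$ and integrating by parts. Formally $\frac{\d}{\d t}\entv(\tilde\mu_t)=\int\log f_t\,\Delta f_t\,\d\mm=-\mathcal E(f_t,\log f_t)$, and applying the chain rule \eqref{eq:chainriemannian} to $\log(f_t+\eps)$ followed by $\eps\downarrow 0$ monotone convergence gives $\mathcal E(f_t,\log f_t)=\int\weakgrad{f_t}^2/f_t\,\d\mm$. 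The delicate point, which is the essential novelty over the finite-measure case, is the absence of any lower bound on $f_t$: the truncation $\log(f_t+\eps)$ is handled via Theorem~\ref{thm:change}(b) and Lemma~\ref{lem:essential} for the energy side, and via Lemma~\ref{lem:ConvEnt} for the entropy side.

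Next I would prove the two bounds that bracket the $W_2$-gradient-flow structure of $\tilde\mu_s$. For the metric derivative, Kuwada's duality argument: for any bounded Lipschitz $c$-concave $\varphi$,
\[
\frac{\d}{\d s}\int_X\varphi\,\d\tilde\mu_s=-\mathcal E(\varphi,f_s)=-\int_X\Gbil{\varphi}{f_s}\,\d\mm,
\]
so factoring $\weakgrad{f_s}=\sqrt{f_s}\cdot\weakgrad{f_s}/\sqrt{f_s}$ and applying Cauchy--Schwarz with \eqref{eq:boundGamma} produces, upon taking the supremum over Kantorovich potentials via \eqref{eq:dualitabase},
\[
|\dot{\tilde\mu}_s|_{W_2}^2\leq F(f_s)\qquad\text{for a.e. $s>0$.}
\]
For the descending slope, I would exploit the $CD(K,\infty)$ convexity \eqref{eq:CDdef} along geodesics from $\tilde\mu_s$ toward perturbed probability densities of the form $(1+\tau h)f_s\mm$ (renormalised) with $h$ bounded and compactly supported; dividing the convexity inequality by $\tau W_2$ and letting $\tau\downarrow 0$, using the chain rule \eqref{eq:chainriemannian} and Lemma~\ref{lem:WeakConvK} to identify the directional derivative, yields $|D^-\entv|^2(\tilde\mu_s)\leq F(f_s)$.

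Finally I would combine everything. The entropy dissipation identity, the twin bounds just established, and the general inequality (valid along any absolutely continuous curve, since $|D^-\entv|$ is a weak upper gradient of the $K$-geodesically convex $\entv$)
\[
\entv(\mu)-\entv(\tilde\mu_t)\leq \tfrac12\int_0^t|\dot{\tilde\mu}_s|^2\,\d s+\tfrac12\int_0^t|D^-\entv|^2(\tilde\mu_s)\,\d s
\]
force equalities throughout, with $|\dot{\tilde\mu}_s|^2=|D^-\entv|^2(\tilde\mu_s)=F(f_s)$ for a.e.\ $s$. This is precisely the EDE; uniqueness of EDE solutions for $\entv$ (Gigli's result cited after the theorem) then gives $\tilde\mu_t=\heatw_t\mu$, and the chain of identities just derived is \eqref{eq:edissrateflow}, in particular containing the local absolute continuity of $t\mapsto\entv(\heatw_t\mu)$. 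The hardest step is the slope bound $|D^-\entv|^2\leq F$: the displacement convexity must be exploited with perturbations preserving the $\probt{X}\cap D(\entv)$ structure, and in the $\sigma$-finite regime one cannot use uniform lower bounds on $f_t$, which is precisely why Theorem~\ref{thm:change} and the weighted Cheeger-energy framework of Section~\ref{sec:Cheeger} are essential throughout the argument.
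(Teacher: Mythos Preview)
The paper does not prove this theorem at all: it is quoted from \cite{Ambrosio-Gigli-Savare11} (see the sentence immediately preceding the statement, pointing to (8.5), Theorem~8.5 and Theorem~9.3(iii) there). So there is no in-paper proof to compare against; your outline is essentially the strategy of \cite{Ambrosio-Gigli-Savare11}, and at that level it is correct.

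Two substantive remarks, though. First, the theorem is stated for a general $CD(K,\infty)$ space, with no assumption that $\C$ is quadratic; the semigroup ${\sf h}_t$ is the $L^2$-gradient flow of the convex (possibly non-quadratic) functional $\C$, hence in general nonlinear. Your write-up leans throughout on the bilinear form $\mathcal E$, on $\Gbil{\cdot}{\cdot}$, and on ``the Markovian semigroup generated by the closed quadratic form $\C$'', none of which are available in the stated generality. In \cite{Ambrosio-Gigli-Savare11} the entropy--Fisher dissipation and the Kuwada-type metric-derivative bound are obtained via the subdifferential of $\C$ and the chain rule \eqref{eq:chainrule} for weak gradients, not via a bilinear form. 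Second, the tools you invoke from \emph{this} paper (Theorem~\ref{thm:change}, Lemma~\ref{lem:essential}, Lemma~\ref{lem:WeakConvK}) are developed for the \emph{different} purpose of proving the $EVI_K$ property in Section~\ref{sec:lastq}; they are neither needed for, nor particularly suited to, the identification theorem, which is logically prior to and independent of that machinery.

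Finally, your slope step is the weakest link. Perturbing by $(1+\tau h)f_s\mm$ and ``dividing by $\tau W_2$'' does not access $|D^-\entv|$ in $(\probt X,W_2)$: the descending slope is defined via $W_2$-nearby competitors, not via linear perturbations of the density, and the ratio $W_2/\tau$ need not have a useful limit for such perturbations. In \cite{Ambrosio-Gigli-Savare11} the bound $|D^-\entv|^2\le F$ is obtained by differentiating the entropy along $W_2$-geodesics using the weak-upper-gradient property of $\weakgrad{\cdot}$ and the formula $\ent{\cdot}$ rewritten through \eqref{eq:changeentropy}; this is where the $CD(K,\infty)$ assumption and the calculus of Section~\ref{sec:Cheeger} (with reference measure $\mm$, not a weighted one) actually enter.
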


In other words, one can unambiguously define the heat flow on a
$CD(K,\infty)$ space either as the gradient flow of $\C$
in $L^2(X,\mm)$ or as the $W_2$-gradient flow of $\entv$.

\subsection{Derivative of $W_2^2(\cdot,\sigma)$ along the heat flow }

Notice that this result, whose proof is achieved by a duality argument, requires no curvature assumption.
We need only to assume that $\C$ is quadratic and that $\mm$ satisfies the growth condition \eqref{eq:growthcond}. 

\begin{theorem}
\label{thm:derw2} Let 
$\mu=f\mm\in D(\entv)$ and define $\mu_t:=({\sf h}_tf)\mm=f_t\mm$.
Let $\sigma\in\probt X$ with bounded support. Then, for a.e. $t>0$ the following property holds:
for any Kantorovich potential $\varphi_t$ relative to $(\mu_t,\sigma)$ whose
slope has linear growth, one has
\begin{equation}
\label{eq:derw2} \frac{\d}{\d t}\frac12W_2^2(\mu_t,\sigma)=
-\mathcal E_{\mu_t}(\varphi_t,\log f_t).
\end{equation}
\end{theorem}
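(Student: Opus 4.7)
My plan is to use Kantorovich duality to squeeze the derivative of $\tfrac12 W_2^2(\mu_t,\sigma)$ between difference quotients of the linear functional $\mu\mapsto\int\varphi_{t_0}\,d\mu$, to differentiate the latter through the $L^2$ calculus for the heat semigroup applied to bounded Lipschitz truncations of $\varphi_{t_0}$, and finally to remove the truncation by combining the weighted Cheeger identity $\mathcal E(\varphi,g)=\mathcal E_\rho(\varphi,\log g)$ from Theorem~\ref{thm:weighted} with the convergence Lemma~\ref{lem:WeakConvK}. The derivative formula comes out directly from these three ingredients.

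\textbf{Kantorovich squeeze.} Fix $t_0>0$ and an admissible Kantorovich potential $\varphi_{t_0}$ as in the statement. Because $(\varphi_{t_0},\varphi_{t_0}^c)$ is an admissible pair in the dual problem \eqref{eq:dualitabase} at every time $t$ and optimal at $t_0$, one has, for $t$ close to $t_0$,
$$
\tfrac12 W_2^2(\mu_t,\sigma)-\tfrac12 W_2^2(\mu_{t_0},\sigma)\ge\int_X\varphi_{t_0}(f_t-f_{t_0})\,d\mm,
$$
together with the matching upper bound obtained by inserting a Kantorovich potential $\varphi_t$ at time $t$ as test function for the dual problem at time $t_0$. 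Once I show that the lower right-hand side divided by $t-t_0$ tends to a finite limit $L$ as $t\to t_0$, and that the variable-potential counterpart shares the same limit, it will follow that $t\mapsto\tfrac12 W_2^2(\mu_t,\sigma)$ is differentiable at $t_0$ with derivative $L$.

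\textbf{Differentiation and truncation removal.} For bounded Lipschitz potentials $\varphi_R$ with bounded support, chosen so that $\varphi_R\to\varphi_{t_0}$ pointwise, $|\varphi_R|(x)\le C(1+\sfd^2(x,x_0))$ and $|D\varphi_R|$ has linear growth uniformly in $R$, the membership $\varphi_R\in L^2(X,\mm)\cap\calW$ together with the $L^2$ identity $\frac{d}{ds}f_s=\Delta f_s$ lets \eqref{eq:intbypartsoo} yield
$$
\int_X\varphi_R(f_{t_0+h}-f_{t_0})\,d\mm=-\int_{t_0}^{t_0+h}\mathcal E(\varphi_R,f_s)\,ds.
$$
By Theorem~\ref{thm:weighted} the integrand equals $\mathcal E_{\mu_s}(\varphi_R,\log f_s)$ at the a.e.\ $s$ where $\C(\sqrt{f_s})<\infty$ (this is a full-measure set by the energy-dissipation bound \eqref{eq:edissrateflow}). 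A Cauchy--Schwarz estimate controlled by $\int|D\varphi_R|^2\,d\mu_s$ and $\int\weakgrad{f_s}^2/f_s\,d\mm$ shows this function of $s$ is locally integrable, and Lebesgue differentiation therefore gives, for a.e.\ $t_0$ and every fixed $R$,
$$
\lim_{h\to 0}\frac{1}{h}\int_X\varphi_R(f_{t_0+h}-f_{t_0})\,d\mm=-\mathcal E_{\mu_{t_0}}(\varphi_R,\log f_{t_0}).
$$
The tail $h^{-1}\int(\varphi_{t_0}-\varphi_R)(f_{t_0+h}-f_{t_0})\,d\mm$ is controlled uniformly in small $h$ by the quadratic bound on $\varphi_{t_0}-\varphi_R$ together with the $2$-uniform integrability of the $W_2$-compact set $\{\mu_s:|s-t_0|\le\delta\}$ (exactly as in the passage around \eqref{ClaimPart1} in Lemma~\ref{lem4.4revised}(1)), and hence vanishes as $R\to\infty$. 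On the limit side, Lemma~\ref{lem:WeakConvK} applied with $\eta=\mu_{t_0}$, $g=f_{t_0}$ and $f_n=\varphi_{R_n}$ transfers $\mathcal E_{\mu_{t_0}}(\varphi_{R_n},\log f_{t_0})\to\mathcal E_{\mu_{t_0}}(\varphi_{t_0},\log f_{t_0})$. A diagonal extraction identifies the common limit $L=-\mathcal E_{\mu_{t_0}}(\varphi_{t_0},\log f_{t_0})$.

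\textbf{Main obstacle.} The subtle point is matching the two-sided squeeze: while the lower bound tied to $\varphi_{t_0}$ is immediate, the upper bound requires differentiating $h^{-1}\int\varphi_{t_0+h}(f_{t_0+h}-f_{t_0})\,d\mm$, whose integrand depends on $h$. I would resolve this through the $\Gamma$-compactness of Kantorovich potentials provided by Lemma~\ref{lem:GammaConvKant}: Proposition~\ref{prop:goodKant} yields, at every time close to $t_0$, a potential with uniform linear-growth slope on $\sigma$'s bounded support, and the uniform decay of $f_t$ controls the remaining hypotheses of Lemma~\ref{lem:GammaConvKant}; any $\Gamma$-limit of $\varphi_{t_0+h}$ along a subsequence $h\to 0$ is itself a Kantorovich potential relative to $(\mu_{t_0},\sigma)$, and applying the analysis of the previous paragraph to this limit potential matches the upper limit with $L$, producing the stated equality \eqref{eq:derw2}.
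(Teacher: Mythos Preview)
Your truncation and weighted-energy identification steps are essentially the paper's, but you are working much harder than necessary on the ``main obstacle'' and there is a genuine gap there. The paper bypasses the variable-potential upper bound entirely by first observing that the curve $t\mapsto\mu_t$ is locally absolutely continuous in $(\probt X,W_2)$ (this follows from the energy-dissipation identity \eqref{eq:edissrateflow}), so $t\mapsto W_2^2(\mu_t,\sigma)$ is absolutely continuous and hence differentiable at a.e.\ $t_0$. One then fixes such a $t_0$ which is also a Lebesgue point of $s\mapsto\C(\sqrt{f_s})$ and a point where $\tfrac{\d}{\d t}f_t=\Delta f_t$ in $L^2$. The duality inequality with the \emph{fixed} potential $\varphi_{t_0}$ reads
\[
\tfrac12 W_2^2(\mu_{t_0+h},\sigma)-\tfrac12 W_2^2(\mu_{t_0},\sigma)\ \ge\ \int_X\varphi_{t_0}(f_{t_0+h}-f_{t_0})\,\d\mm
\]
for \emph{every} $h$ (positive or negative), and your truncation argument (which in the paper is packaged as Lemma~\ref{le:luigi}) shows the right side equals $-h\,\mathcal E_{\mu_{t_0}}(\varphi_{t_0},\log f_{t_0})+o(h)$ as $h\to 0$. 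Dividing by $h>0$ bounds the upper right Dini derivative from below; dividing by $h<0$ bounds the lower left Dini derivative from above; since the derivative exists, both coincide with $-\mathcal E_{\mu_{t_0}}(\varphi_{t_0},\log f_{t_0})$. This works for any admissible $\varphi_{t_0}$, which is exactly the statement.

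Your $\Gamma$-compactness route, by contrast, has two problems. First, the subsequential limit potential $\varphi$ produced by Lemma~\ref{lem:GammaConvKant} need not be the given $\varphi_{t_0}$, so even if the argument closed you would only match the upper bound with $-\mathcal E_{\mu_{t_0}}(\varphi,\log f_{t_0})$, not with your $L=-\mathcal E_{\mu_{t_0}}(\varphi_{t_0},\log f_{t_0})$; you have not explained why these agree. Second, and more seriously, passing to the limit in $h^{-1}\int\varphi_{t_0+h}(f_{t_0+h}-f_{t_0})\,\d\mm$ is not justified: Lemma~\ref{lem:GammaConvKant} gives only pointwise convergence $\varphi_{t_0+h_n}\to\varphi$, whereas any estimate on the cross term $h^{-1}\int(\varphi_{t_0+h}-\varphi)(f_{t_0+h}-f_{t_0})\,\d\mm$ via Lemma~\ref{le:luigi} would require smallness of $\int\weakgrad{(\varphi_{t_0+h}-\varphi)}^2\,\d\mu_s$, i.e.\ convergence of the \emph{gradients}, which you do not have. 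The a priori differentiability observation removes the need for any of this.
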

\begin{proof} By the energy dissipation estimate \eqref{eq:edissrateflow},
we have $\int_0^\infty\C(\sqrt{f_t})\,\dt<\infty$. Furthermore, the maximum principle
proved in Theorem~4.20 of \cite{Ambrosio-Gigli-Savare11} shows that
$f_t\leq \|f\|_\infty$ $\mm$-a.e. in $X$ for all
$t\geq 0$. Also, by Proposition~\ref{prop:goodKant} the potential $\varphi_t$ belongs to
$L^1(X,\nu)$ for all $\nu\in\probt{X}$ and its slope has linear growth. Furthermore, the
$L^1$ estimate is uniform in $t$ and in bounded subsets of $\probt
X$ and the estimate on the slope depends on $\sigma$ only.

Thanks to \eqref{eq:edissrateflow}, the map $t\mapsto f_t\mm$ is a locally absolutely continuous curve
in $\probt X$, hence the derivative on the left hand side of
\eqref{eq:derw2} exists for a.e.~$t>0$. Also, the derivative of
$t\mapsto f_t$ exists in $L^2(X,\mm)$ and coincides with $\Delta f_t$ for a.e.~$t>0$.
Fix $t_0>0$ where both properties hold, which is also
a Lebesgue point for $\C(\sqrt{f_t})$.

We now claim that 
\begin{equation}\label{eq:tyuh}
\lim_{h\downarrow 0}\int_X\psi\frac{
f_{t_0}-f_{t_0-h}}{h}\,\d\mm=-{\mathcal
E}_{\mu_{t_0}}(\psi,\log f_{t_0})
\end{equation} 
for all locally Lipschitz functions $\psi$ whose gradient has linear growth. 
The proof of \eqref{eq:tyuh} is easy if we assume, in addition, that
$\psi$ has bounded support. Indeed,
$h^{-1}(f_{t_0+h}-f_{t_0})\to\Delta f_{t_0}$ as $h\to 0$ in
$L^2(X,\mm)$, so that \eqref{eq:transfer1} and \eqref{eq:intbypartsoo} give
$$
\lim_{h\to 0}\int_X\psi\frac{
f_{t_0+h}-f_{t_0}}{h}\,\d\mm= \int_X\psi\Delta f_{t_0}\,\d\mm=
-{\mathcal E}(\psi,f_{t_0})=-{\mathcal
E}_{\mu_{t_0}}(\psi,\log f_{t_0}).
$$

For the general case, let $\chi_N:X\to [0,1]$ be satisfying ${\rm
Lip}(\chi_N)\leq 1$, $\chi_N\equiv 1$ on $B_N(x_0)$ and
$\chi_N\equiv 0$ on $X\setminus B_{2N}(x_0)$ and define
$\psi^N:=\psi\chi_N$. Applying Lemma~\ref{le:luigi} below with
$\varphi_N:=\psi-\psi^N$ we get
\[
\sup_{|h|<t_0/2}\left|\int
\varphi_N\frac{\rho_{t_0+h}-\rho_{t_0}}{h}\,\d\mm\right|^2\\
\leq \sup_{|h|<t_0/2}\frac8h\int\limits_{t_0-|h|}^{t_0+|h|}
 \C(\sqrt{f_s})\int_X\weakgrad{\varphi_N}^2\,\d\mu_s\,\d s.
\]
Hence (by our choice of $t_0$ and the
$2$-uniform integrability of $\mu_s$)
\[
\limsup_{N\to\infty}\sup_{|h|<t_0/2}\left|\int_X\varphi_N\frac{
f_{t_0+h}-f_{t_0}}{h}\,\d\mm\right|=0,
\]
which, taking into account that ${\mathcal
E}_{\mu_{t_0}}(\psi^N,\log f_{t_0})\to {\mathcal
E}_{\mu_{t_0}}(\psi,\log f_{t_0})$ thanks to Lemma~\ref{lem:WeakConvK}, 
implies \eqref{eq:tyuh}.

Now, notice that since
$\varphi_{t_0}$ is a Kantorovich potential for $(\mu_{t_0},\sigma)$
one has
\[
\begin{split}
\frac 12 W_2^2(\mu_{t_0},\sigma)&=\int_X\varphi_{t_0}\,\d\mu_{t_0}+\int\varphi_{t_0}^c\,\d\sigma\\
\frac 12
W_2^2(\mu_{t_0-h},\sigma)&\geq\int_X\varphi_{t_0}\,\d\mu_{t_0-h}+
\int\varphi_{t_0}^c\,\d\sigma\qquad\text{for all $h$ such that
$t_0-h> 0$.}
\end{split}
\]
Taking the difference between the first identity and the second
inequality and using the claim with $\psi=\varphi_{t_0}$ we get
\[
\frac 12 W_2^2(\mu_{t_0+h},\sigma)-\frac 12 W_2^2(\mu_{t_0},\sigma)\geq -h\mathcal E_{\mu_{t_0}}(\log f_{t_0},\varphi_{t_0})+o(h).
\]
Since $t\mapsto W_2^2(\mu_t,\sigma)$ is differentiable at $t=t_0$ we conclude.
\end{proof}

\begin{lemma}\label{le:luigi} Let $\mu_s=f_s\mm$ be as in the previous theorem and
let $\varphi:X\to\R$ be locally Lipschitz, with $|D \varphi|$ having linear
growth. Then, for $[s,t]\subset (0,\infty)$ one has
\begin{equation}
\label{eq:trucco} \left|\int
\varphi\frac{f_t-f_s}{t-s}\,\d\mm\right|^2\leq \frac
8{t-s}\int_s^t \C(\sqrt{f_r})\biggl(\int\weakgrad
\varphi^2\,\d\mu_r\biggr)\,\d r.
\end{equation}
\end{lemma}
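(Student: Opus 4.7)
The plan is to establish, via the heat equation, the integral identity
\[
\int_X\varphi(f_t-f_s)\,\d\mm=-\int_s^t\mathcal E_{\mu_r}(\log f_r,\varphi)\,\d r,
\]
then apply Cauchy-Schwarz for the bilinear form $\mathcal E_{\mu_r}$, and finally Cauchy-Schwarz in time. The two key identities used are the transfer formula $\mathcal E(\psi,f_r)=\mathcal E_{\mu_r}(\log f_r,\psi)$ from \eqref{eq:transfer1} of Theorem~\ref{thm:weighted}, and $\mathcal E_{\mu_r}(\log f_r,\log f_r)=8\,\C(\sqrt{f_r})$, which follows from Theorem~\ref{thm:change}(b) and the chain rule $\weakgrad{f_r}=2\sqrt{f_r}\,\weakgrad{\sqrt{f_r}}$ via
\[
\mathcal E_{\mu_r}(\log f_r,\log f_r)=\int_X|D\log f_r|^2_{w,\mu_r}\,\d\mu_r=\int_X\frac{\weakgrad{f_r}^2}{f_r}\,\d\mm.
\]
The hypotheses of Theorems~\ref{thm:change} and~\ref{thm:weighted} are met for $\mm$-a.e.~$r>0$, thanks to the $L^\infty$ maximum principle from \cite{Ambrosio-Gigli-Savare11} (Theorem~4.20 therein) and to the energy-dissipation identity \eqref{eq:edissrateflow}, which gives $\C(\sqrt{f_r})<\infty$ for a.e.~$r$.

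\textbf{Proof of the identity.} I would first prove the identity for a bounded Lipschitz function $\psi$ with bounded support. Such $\psi$ lies in $L^2(X,\mm)$, so the $L^2$-differentiability $\partial_r f_r=\Delta f_r$ combined with \eqref{eq:intbypartsoo} gives
\[
\ddt\int_X\psi f_r\,\d\mm=-\mathcal E(\psi,f_r)=-\mathcal E_{\mu_r}(\log f_r,\psi),
\]
which integrates over $[s,t]$ to the required identity. To pass to $\varphi$ with linear-growth slope, I would truncate by $\varphi_N:=\varphi\chi_N$, choosing the Lipschitz cutoff $\chi_N\in[0,1]$ identically $1$ on $B_N(x_0)$, supported in $B_{2N}(x_0)$, and with Lipschitz constant $O(1/N)$. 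The left-hand side converges by dominated convergence, with dominant $|\varphi|(f_s+f_t)\in L^1(X,\mm)$ (the quadratic growth of $\varphi$ is absorbed into $\mu_s,\mu_t\in\probt{X}$). For the right-hand side, Lemma~\ref{lem:WeakConvK} yields the pointwise-in-$r$ convergence $\mathcal E_{\mu_r}(\log f_r,\varphi_N)\to\mathcal E_{\mu_r}(\log f_r,\varphi)$; the Leibniz rule together with the $1/N$ damping and the $2$-uniform integrability of the continuous heat curve $\{\mu_r\}_{r\in[s,t]}$ in $\probt{X}$ provide a uniform bound $\sup_N\int_X\weakgrad{\varphi_N}^2\,\d\mu_r\leq C$ on compact $r$-intervals, which allows the $N$-limit to be moved inside the $r$-integral against the weight $\sqrt{\C(\sqrt{f_r})}\in L^2(s,t)$.

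\textbf{Conclusion.} With the identity established for $\varphi$, Cauchy-Schwarz for the positive semidefinite bilinear form $\mathcal E_{\mu_r}$ yields
\[
|\mathcal E_{\mu_r}(\log f_r,\varphi)|^2\leq\mathcal E_{\mu_r}(\log f_r,\log f_r)\cdot\mathcal E_{\mu_r}(\varphi,\varphi)\leq 8\,\C(\sqrt{f_r})\int_X\weakgrad{\varphi}^2\,\d\mu_r,
\]
using $\mathcal E_{\mu_r}(\varphi,\varphi)\leq\int_X\weakgrad{\varphi}^2\,\d\mu_r$ since the slope is an upper gradient relative to $\mu_r$. A final Cauchy-Schwarz in time, $\bigl|\tfrac{1}{t-s}\int_s^t g\,\d r\bigr|^2\leq\tfrac{1}{t-s}\int_s^t g^2\,\d r$ applied with $g=\mathcal E_{\mu_r}(\log f_r,\varphi)$, produces \eqref{eq:trucco}. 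The main obstacle is the truncation step: pushing $\mathcal E_{\mu_r}(\log f_r,\varphi_N)\to\mathcal E_{\mu_r}(\log f_r,\varphi)$ through the $r$-integral forces one to balance the quadratic growth of $\varphi$ (inherited from the linear growth of its slope) against the finite second moments of $\mu_r$, which is exactly why the cutoff $\chi_N$ must be taken with Lipschitz constant $O(1/N)$ rather than $O(1)$.
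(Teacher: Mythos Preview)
Your argument is correct and arrives at the same estimate as the paper, but it is organized differently. The paper's proof is more direct: for $\varphi\in L^2(X,\mm)$ it works entirely with the \emph{unweighted} form, writing
\[
\Bigl|\int_X\varphi\,\Delta f_r\,\d\mm\Bigr|
=\bigl|\mathcal E(\varphi,f_r)\bigr|
\le\int_X\weakgrad{\varphi}\,\weakgrad{f_r}\,\d\mm
\le\Bigl(\int_X\weakgrad{\varphi}^2\,\d\mu_r\Bigr)^{1/2}\Bigl(\int_X\tfrac{\weakgrad{f_r}^2}{f_r}\,\d\mm\Bigr)^{1/2},
\]
using only \eqref{eq:intbypartsoo}, the pointwise bound \eqref{eq:boundGamma}, and ordinary Cauchy--Schwarz after inserting $\sqrt{f_r}$; integration in $r$ then gives \eqref{eq:trucco}, and the general case follows by cutting off with $\varphi\chi_N$. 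Your route instead passes through the weighted form $\mathcal E_{\mu_r}$ via the transfer formula \eqref{eq:transfer1}, which requires invoking Theorems~\ref{thm:change} and~\ref{thm:weighted} and Lemma~\ref{lem:WeakConvK}. Both approaches implement the same underlying idea---factoring out the Fisher information $8\C(\sqrt{f_r})$ from the time derivative---and the paper's version simply avoids loading the weighted-space machinery.

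One point worth noting: your insistence on ${\rm Lip}(\chi_N)=O(1/N)$ is well taken and in fact needed for the annulus term $\int_{B_{2N}\setminus B_N}|\varphi|^2\weakgrad{\chi_N}^2\,\d\mu_r$ to be controlled by the second moment (since $|\varphi|$ has quadratic growth). The paper states ${\rm Lip}(\chi_N)\le 1$ but the natural construction on an annulus of width $N$ already gives $O(1/N)$, so your explicit bookkeeping here is a welcome clarification rather than a genuine divergence in method.
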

\begin{proof}
Assume first that $\varphi\in L^2(X,\mm)$. Then integrating by parts we
get
\[
\left|\int \varphi\Delta f_r\,\d\mm\right|^2\leq\left(\int\weakgrad
\varphi\,\weakgrad{f_r}\,\d\mm\right)^2\leq\int\weakgrad
\varphi^2\,\d\mu_r\,\int\frac{\weakgrad{f_r}^2}{f_r}\,\d\mm,
\]
for all $r>0$, and the thesis follows by integration in $(s,t)$. For
the general case, we approximate $\varphi$ by $\varphi\chi_N$, with $\chi_N$
chosen as in the proof of the previous theorem.
\end{proof}

\subsection{Derivative of the entropy along $\entv$-convex $L^\infty$-bounded geodesics}

The goal of this subsection is to prove the following theorem, where both the curvature condition and the fact
that $\C$ is quadratic play a role.

\begin{theorem}[Entropy inequality]\label{Thm:DerEntr}
Assume that $(X,\sfd,\mm)$ is a $CD(K,\infty)$ space.
Let  $\eta=f\mm,\,\sigma=g\mm\in\probt X$ with $g$ uniformly bounded and having compact support, $f$ uniformly bounded with
$\C(\sqrt{f})<\infty$.  Then there exists a Kantorovich potential $\varphi$ from $\eta$ to $\sigma$ such that $|D\varphi|$ has linear growth
and
\begin{equation}\label{eq:Step1}
\entv(\sigma)-\entv(\eta)- \frac{K}{2} W_2^2(\eta,\sigma) \geq -\mathcal E_{\eta}(\varphi,\log f).
\end{equation}
\end{theorem}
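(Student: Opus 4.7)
The plan is to use the $K$-convexity of the entropy along a $CD(K,\infty)$-geodesic, identify the right-derivative of the entropy at $t=0$ with $-\mathcal E_\eta(\varphi,\log f)$ for a suitable Kantorovich potential $\varphi$, and then pass to the limit in an approximation that brings $\eta$ into the regime where Theorem~\ref{thm:goodgeodesics} and Lemma~\ref{lem4.4revised} apply.

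First I would approximate $\eta=f\mm$ by densities $f_n:=c_n(f+\alpha_n)\rme^{-\beta_n\sfd^2(\cdot,x_0)}$ with $\alpha_n,\beta_n\downarrow 0$ and $c_n$ normalizing; the growth condition \eqref{eq:growthcond} (guaranteed by $CD(K,\infty)$) ensures normalizability, and this construction yields $f_n$ bounded, with Gaussian decay of the form \eqref{eq:decay}, strictly positive on bounded sets, and with $\C(\sqrt{f_n})<\infty$. A diagonal/monotone-approximation argument based on Lemma~\ref{lem:ConvEnt}, together with dominated convergence of second moments, gives $\entv(\eta_n)\to\entv(\eta)$ and $W_2(\eta_n,\sigma)\to W_2(\eta,\sigma)$. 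For each $n$, Theorem~\ref{thm:goodgeodesics} produces a geodesic $\mu^n_t=\rho^n_t\mm$ from $\eta_n$ to $\sigma$ satisfying \eqref{eq:CDdef} with $\sup_{t\in[0,t_0]}\|\rho^n_t\|_\infty<\infty$; let $\ppi_n\in\gopt(\eta_n,\sigma)$ be an associated optimal plan. Proposition~\ref{prop:goodKant} provides a locally Lipschitz Kantorovich potential $\varphi_n$ from $\eta_n$ to $\sigma$ with $|D\varphi_n|$ of linear growth and $|\varphi_n|$ of quadratic growth.

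Rearranging the $CD(K,\infty)$-inequality for $\entv(\mu^n_t)$, dividing by $t\in(0,t_0)$, and applying the convexity bound $s\log s-r\log r\geq(s-r)(1+\log r)$ with $s=\rho^n_t$ and $r=f_n$ yields
\[
\entv(\sigma)-\entv(\eta_n)-\frac{K}{2}(1-t)W_2^2(\eta_n,\sigma)\geq\frac{1}{t}\int[\log f_n(\gamma_t)-\log f_n(\gamma_0)]\,\d\ppi_n(\gamma).
\]
The central step is then to prove
\[
\liminf_{t\downarrow 0}\frac{1}{t}\int[\log f_n(\gamma_t)-\log f_n(\gamma_0)]\,\d\ppi_n(\gamma)\geq-\mathcal E_{\eta_n}(\varphi_n,\log f_n).
\]
Since $\rho^n_t\leq C$ on $[0,t_0]$, the restriction of $\ppi_n$ to $C([0,t_0];X)$ is a $2$-test plan, so $t\mapsto\log f_n(\gamma_t)$ is Sobolev along $\ppi_n$-a.e.\ $\gamma$ (Remark~\ref{rem:charaweakgrad}). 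I would then split
\[
\frac{\log f_n(\gamma_t)-\log f_n(\gamma_0)}{t}=\frac{\log f_n(\gamma_t)-\log f_n(\gamma_0)}{\sfd(\gamma_0,\gamma_t)}\cdot\frac{\sfd(\gamma_0,\gamma_t)}{t};
\]
Lemma~\ref{lem4.4revised}(2) gives strong $L^2(\ppi_n)$-convergence of the second factor to $\weakgrad{\varphi_n}(\gamma_0)$, while Lemma~\ref{lem4.4revised}(1) applied to $h=\log f_n$ yields a uniform $L^2(\ppi_n)$-bound on the first factor (one checks \eqref{assNablaf} for $\weakgrad{\log f_n}$ thanks to the explicit $\rme^{-\beta_n\sfd^2}$ factor in $f_n$). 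Any weak subsequential limit then pairs with the strong limit; identifying this pairing as a lower bound for $-\mathcal E_{\eta_n}(\varphi_n,\log f_n)$ uses the quadratic structure of $\C$, the transfer identity \eqref{eq:transfer1} together with the chain rule \eqref{eq:chainriemannian} for $\Gbil{\cdot}{\cdot}$, and the Cauchy--Schwarz bound \eqref{eq:boundGamma}.

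The main obstacle is precisely this last identification: producing a signed lower bound for the weak-strong pairing, rather than just $L^2$-control of the factors, is where the quadraticity of $\C$ (and hence the bilinearity of $\Gbil{\cdot}{\cdot}$) becomes indispensable, reflecting the phenomenon anticipated in the introduction. Once it is established, the passage $n\to\infty$ combines Lemma~\ref{lem:GammaConvKant} (which produces a Kantorovich potential $\varphi$ for $(\eta,\sigma)$ with the same growth bounds, so $|D\varphi|$ is of linear growth), the entropy convergence $\entv(\eta_n)\to\entv(\eta)$, the weak-continuity result of Lemma~\ref{lem:WeakConvK}, and the remainder estimate of Lemma~\ref{lem:essential} on $\{f_n\neq f\}$ controlling the change of $\mathcal E_\eta(\log g,\varphi)$ under perturbation of $g$; together these give $\mathcal E_{\eta_n}(\varphi_n,\log f_n)\to\mathcal E_\eta(\varphi,\log f)$ and yield \eqref{eq:Step1} in the limit.
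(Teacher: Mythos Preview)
Your strategy matches the paper's---approximate $\eta$, prove the inequality for the approximant via Lemma~\ref{lem4.4revised}, then pass to the limit---but two genuine gaps remain.

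The more serious one is the identification step you yourself flag as the main obstacle. Weak--strong pairing only gives $\limsup_t\int F_t\,\sfd(\gamma_0,\gamma_t)/t\,\d\ppi_n=\int F_\infty\cdot\weakgrad{\varphi_n}(\gamma_0)\,\d\ppi_n$ along a subsequence, for some unidentified weak limit $F_\infty$ of the difference quotients of $\log f_n$. There is no intrinsic way to recognise $F_\infty$, and the Cauchy--Schwarz bound~\eqref{eq:boundGamma} only yields $|\int F_\infty\weakgrad{\varphi_n}\,\d\ppi_n|\le\|\weakgrad{\log f_n}\|_{L^2(\eta_n)}\|\weakgrad{\varphi_n}\|_{L^2(\eta_n)}$, an upper bound for $|\mathcal E_{\eta_n}(\varphi_n,\log f_n)|$ rather than the one-sided inequality with the correct sign. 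The paper avoids identifying $F_\infty$ altogether by a polarization argument: apply Lemma~\ref{lem4.4revised}(1) to $h=\varphi_n+\epsilon\log f_n$ to obtain $\limsup_t\int|G_t+\epsilon F_t|^2\,\d\ppi_n\le\int\weakgrad{(\varphi_n+\epsilon\log f_n)}^2\,\d\eta_n$, subtract the \emph{equality} $\lim_t\int|G_t|^2\,\d\ppi_n=\int\weakgrad{\varphi_n}^2\,\d\eta_n$ from Lemma~\ref{lem4.4revised}(2), divide by $2\epsilon$ and let $\epsilon\downarrow 0$. The right-hand side becomes $\int\Gbil{\varphi_n}{\log f_n}\,\d\eta_n=\mathcal E_{\eta_n}(\varphi_n,\log f_n)$ by~\eqref{eq:defGamma}. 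This is where quadraticity of $\C$ really enters, and it is not recoverable from the ingredients you list.

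Second, your approximation $f_n=c_n(f+\alpha_n)\rme^{-\beta_n\sfd^2(\cdot,x_0)}$ causes two problems. The gradient of $\log f_n$ contains the term $\weakgrad{f}/(f+\alpha_n)$, which carries no pointwise linear-growth bound at infinity, so hypothesis~\eqref{assNablaf} can fail for $h=\log f_n$; and the set $\{f_n\neq c_n f\}$ is all of $X$, so Lemma~\ref{lem:essential} gives no useful control on $\mathcal E_{\eta_n}(\varphi_n,\log f_n)-\mathcal E_\eta(\varphi_n,\log f)$. The paper's choice $f_{\delta,n}=c_{\delta,n}\bigl[(\chi_n^2 f)\vee\delta\rme^{-2c\Wgh^2}\bigr]$ is designed to cure both: outside $B_{2n}(x_0)$ it is a pure Gaussian, so $\weakgrad{\log f_{\delta,n}}$ grows linearly there; and $f_{\delta,n}$ differs from a multiple of $f_\delta$ only on $A_\delta\setminus B_n(x_0)\downarrow\emptyset$ (and $f_\delta$ from a multiple of $f$ only on $X\setminus A_\delta\downarrow\{f=0\}$, where $\weakgrad{f}=0$), which is exactly what makes the equi-integrability in~\eqref{eq:equiBoundF} combine with Lemma~\ref{lem:essential} to kill the remainder.
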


The proof of Theorem~\ref{Thm:DerEntr}, carried by approximation, is presented at the end of the subsection; 
the first crucial step is the following proposition, whose proof relies on
Proposition~\ref{prop:goodKant} and Lemma \ref{lem4.4revised}.

\begin{proposition}\label{Prop1}
Under the assumptions of Theorem~\ref{Thm:DerEntr}, for $\delta>0$ call 
\begin{equation}\label{eq:chietad}
f_{\delta,n}=c_{\delta,n}[(\chi_n^2) \eta \vee \delta\rme^{-2c\Wgh^2}],
\end{equation}
where $c$ is strictly larger than the constant ${\sf c}$ in \eqref{eq:growthcond}, 
$c_{\delta,n}$ is the normalizing constant such that $f_{\delta,n} \mm$ is a probability density, 
$\chi_n$ is a $1$-Lipschitz cut-off function equal to $1$ on $B_n(x_0)$ and null outside $B_{2n}(x_0)$.\\
Then there exists a Kantorovich potential $\varphi_{\delta,n}$ from $\eta_{\delta,n}:=f_{\delta,n} \mm$ to $\sigma$ 
satisfying the growth conditions
\begin{equation}\label{growthPhidn}
|\varphi_{\delta,n}(x)|\leq C(\sigma)(1+\sfd^2(x,x_0)),\quad\quad\quad |D\varphi_{\delta,n}|(x)\leq C(\sigma)(1+\sfd(x,x_0)),
\end{equation}
such that
\begin{equation}\label{eq:Prop1}
\entv(\sigma)-\entv(\eta_{\delta,n})- \frac{K}{2} W_2^2(\eta_{\delta,n},\sigma) \geq 
-\mathcal E_{\eta_{\delta,n}}(\varphi_{\delta,n},\log f_{\delta,n}).
\end{equation}
\end{proposition}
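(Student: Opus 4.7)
The plan is to construct a good geodesic from $\eta_{\delta,n}$ to $\sigma$ via Theorem~\ref{thm:goodgeodesics}, select a suitable Kantorovich potential $\varphi_{\delta,n}$ via Proposition~\ref{prop:goodKant}, bound the forward difference quotient of the entropy along this geodesic from above using the $CD(K,\infty)$ convexity inequality, and then bound the same quotient from below by $-\mathcal{E}_{\eta_{\delta,n}}(\varphi_{\delta,n},\log f_{\delta,n})$ as $t\downarrow 0$ using the convergence results of Section~\ref{sec:auxiliary}. The design of $f_{\delta,n}$ in \eqref{eq:chietad} as the maximum of a cut-off of $f$ and a positive Gaussian is tailored exactly so that both Theorem~\ref{thm:goodgeodesics} (needing Gaussian upper decay) and Theorems~\ref{thm:change}--\ref{thm:weighted} (needing strict positivity, $L^\infty$ bound, and $\C(\sqrt{f_{\delta,n}})<\infty$) are applicable.

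Concretely, I would first verify that $\eta_{\delta,n}\in\probt{X}\cap D(\entv)$: on $B_{2n}(x_0)$ one has $f_{\delta,n}\leq c_{\delta,n}(\|f\|_\infty+\delta)$, while on the complement $f_{\delta,n}=c_{\delta,n}\delta\rme^{-2c\Wgh^2}$, yielding a global Gaussian bound $f_{\delta,n}\leq c_1\rme^{-c_2\Wgh^2}$. The strict positivity $f_{\delta,n}\geq c_{\delta,n}\delta\rme^{-2c\Wgh^2}>0$, combined with locality of weak gradients, the chain rule, and $\C(\sqrt f)<\infty$, gives $\C(\sqrt{f_{\delta,n}})<\infty$; Theorem~\ref{thm:change}(b) then puts $\log f_{\delta,n}\in\calW_{\eta_{\delta,n}}$ with $|D\log f_{\delta,n}|_{w,\eta_{\delta,n}}=\weakgrad{f_{\delta,n}}/f_{\delta,n}$, and Theorem~\ref{thm:weighted} shows $\C_{\eta_{\delta,n}}$ is quadratic, so that $\mathcal{E}_{\eta_{\delta,n}}(\varphi_{\delta,n},\log f_{\delta,n})$ is meaningful. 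On $B_{2n}^c(x_0)$, the explicit formula $\log f_{\delta,n}=\log(c_{\delta,n}\delta)-2c\Wgh^2$ combined with Proposition~\ref{prop:locality} gives $\weakgrad{\log f_{\delta,n}}\leq 4c\Wgh$, supplying the linear-growth input required by Lemma~\ref{lem4.4revised}(1).

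Since $\sigma$ has bounded support, Proposition~\ref{prop:goodKant} provides a locally Lipschitz Kantorovich potential $\varphi_{\delta,n}=\psi^c$ relative to $(\eta_{\delta,n},\sigma)$ satisfying \eqref{growthPhidn}, while Theorem~\ref{thm:goodgeodesics} produces a geodesic $(\mu_t=\rho_t\mm)$ in $\probt{X}$ satisfying the $CD(K,\infty)$ convexity inequality \eqref{eq:CDdef} for all $t\in[0,1]$ and $\sup_{t\in[0,t_0]}\|\rho_t\|_\infty<\infty$ for some $t_0\in(0,1)$; I fix $\ppi\in\gopt(\eta_{\delta,n},\sigma)$ with $(\e_t)_\sharp\ppi=\mu_t$. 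The convexity inequality rearranges to
\[
\entv(\sigma)-\entv(\eta_{\delta,n})-\frac{K}{2}(1-t)W_2^2(\eta_{\delta,n},\sigma)\geq\frac{\entv(\mu_t)-\entv(\eta_{\delta,n})}{t},\qquad t\in(0,t_0],
\]
so \eqref{eq:Prop1} will follow upon showing $\liminf_{t\downarrow 0}[\entv(\mu_t)-\entv(\eta_{\delta,n})]/t\geq-\mathcal{E}_{\eta_{\delta,n}}(\varphi_{\delta,n},\log f_{\delta,n})$. Applying the pointwise convexity estimate $a\log a-b\log b\geq(\log b+1)(a-b)$ with $a=\rho_t$, $b=f_{\delta,n}$, integrating against $\mm$, dropping the $+1$ (both are probability densities), and using $(\e_t)_\sharp\ppi=\rho_t\mm$, this reduces to
\[
\liminf_{t\downarrow 0}\frac{1}{t}\int[\log f_{\delta,n}(\gamma_t)-\log f_{\delta,n}(\gamma_0)]\,\d\ppi\geq-\mathcal{E}_{\eta_{\delta,n}}(\varphi_{\delta,n},\log f_{\delta,n}).
\]
Factoring the integrand as the product of $-[\log f_{\delta,n}(\gamma_0)-\log f_{\delta,n}(\gamma_t)]/\sfd(\gamma_0,\gamma_t)$ and $\sfd(\gamma_0,\gamma_t)/t$, Lemma~\ref{lem4.4revised}(2) provides strong $L^2(\ppi)$-convergence of the second factor to $\weakgrad{\varphi_{\delta,n}}(\gamma_0)$, while Lemma~\ref{lem4.4revised}(1) (after a standard truncation of $\log f_{\delta,n}$ to bring it into $\calW$) yields a uniform $L^2(\ppi)$-bound on the first factor. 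A weak-compactness argument in $L^2(\ppi)$, combined with the Cauchy--Schwarz bound $|\Gbil{\varphi_{\delta,n}}{\log f_{\delta,n}}|\leq\weakgrad{\varphi_{\delta,n}}\weakgrad{\log f_{\delta,n}}$ from \eqref{eq:boundGamma}, the integral representation $\mathcal{E}_{\eta_{\delta,n}}(\varphi_{\delta,n},\log f_{\delta,n})=\int\Gbil{\varphi_{\delta,n}}{\log f_{\delta,n}}\,\d\eta_{\delta,n}$, and identification of the pointwise limit with the directional derivative $-\Gbil{\varphi_{\delta,n}}{\log f_{\delta,n}}\circ\e_0$, yields the claim.

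The main obstacle is precisely this last identification: matching the weak $L^2(\ppi)$-limit of the difference quotients of $\log f_{\delta,n}$ along the geodesic with the quadratic form $\Gbil{\varphi_{\delta,n}}{\log f_{\delta,n}}\circ\e_0$ in the sense of \eqref{eq:defGamma}. This requires the full Hilbertian structure granted by $\C$ being quadratic (propagated to $\C_{\eta_{\delta,n}}$ via Theorem~\ref{thm:weighted}) together with the uniform $L^\infty$-bounds of the good geodesic of Section~\ref{sec:Tapio}, and it is what forces the otherwise baroque definition \eqref{eq:chietad} of $f_{\delta,n}$: the Gaussian lower bound is needed to make $\log f_{\delta,n}$ and its weak gradient well-controlled, while the uniform upper bound combined with $\C(\sqrt f)<\infty$ is needed to keep $\C_{\eta_{\delta,n}}$ a well-behaved quadratic form.
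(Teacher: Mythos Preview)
Your overall strategy matches the paper's proof exactly: good geodesic from Theorem~\ref{thm:goodgeodesics}, Kantorovich potential from Proposition~\ref{prop:goodKant}, convexity of $z\log z$ to reduce to a difference quotient of $\log f_{\delta,n}$ along $\ppi$, then the factoring into $F_t(\gamma):=[\log f_{\delta,n}(\gamma_0)-\log f_{\delta,n}(\gamma_t)]/\sfd(\gamma_0,\gamma_t)$ and $G_t(\gamma):=\sfd(\gamma_0,\gamma_t)/t$, with Lemma~\ref{lem4.4revised}(2) giving strong $L^2(\ppi)$-convergence of $G_t$ to $\weakgrad{\varphi_{\delta,n}}\circ\e_0$ and Lemma~\ref{lem4.4revised}(1) giving a uniform $L^2(\ppi)$-bound on $F_t$. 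You have also correctly diagnosed where the real difficulty sits.

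However, your proposed closure of that gap --- ``weak-compactness \ldots\ and identification of the pointwise limit with $-\Gbil{\varphi_{\delta,n}}{\log f_{\delta,n}}\circ\e_0$'' --- does not go through as stated. Weak $L^2$-subsequential limits of $F_t$ exist, but there is no mechanism here to identify any such limit with a specific object; Lemma~\ref{lem4.4revised}(1) only yields the integral bound $\limsup_t\int|F_t|^2\,\d\ppi\le\int\weakgrad{\log f_{\delta,n}}^2\circ\e_0\,\d\ppi$, not a pointwise limit, and the Cauchy--Schwarz bound \eqref{eq:boundGamma} constrains the \emph{target} quantity, not the approximants. The paper closes the gap with a perturbation trick that you are missing: apply Lemma~\ref{lem4.4revised}(1) to $h=\varphi_{\delta,n}+\eps\log f_{\delta,n}$, obtaining
\[
\limsup_{t\downarrow 0}\int |G_t+\eps F_t|^2\,\d\ppi\le\int\weakgrad{(\varphi_{\delta,n}+\eps\log f_{\delta,n})}^2\circ\e_0\,\d\ppi,
\]
subtract the \emph{equality} $\lim_t\int|G_t|^2\,\d\ppi=\int\weakgrad{\varphi_{\delta,n}}^2\circ\e_0\,\d\ppi$ supplied by Lemma~\ref{lem4.4revised}(2), divide by $2\eps$, and let $\eps\downarrow 0$. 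By the very definition \eqref{eq:defGamma} of $\Gbil{\cdot}{\cdot}$ (transferred to the weight $\eta_{\delta,n}$ via Theorem~\ref{thm:change}(a)), the right-hand side converges to $\mathcal E_{\eta_{\delta,n}}(\varphi_{\delta,n},\log f_{\delta,n})$, giving directly
\[
\limsup_{t\downarrow 0}\int F_tG_t\,\d\ppi\le \mathcal E_{\eta_{\delta,n}}(\varphi_{\delta,n},\log f_{\delta,n}).
\]
This is a polarization-type use of the quadratic structure at the level of the difference quotients themselves, and it bypasses entirely any identification of weak limits of $F_t$.
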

\begin{proof} First of all we are under the assumptions of Theorem~\ref{thm:goodgeodesics}, so let $\ppi\in \gopt(\eta_{\delta,n},\sigma)$ and let 
$(\e_t)_\sharp\ppi=\mu_t=f_t\mm$, $t\in[0,1]$, be the associated good geodesic from $\eta_{\delta,n}$ to $\sigma$ with a
uniform  $L^\infty$ bound on the density for $t\in (0,t_0)$ and the $K$-convexity of the entropy.  Let also $\varphi$ be the Kantorovich potential, 
given by Proposition~\ref{prop:goodKant}, with quadratic growth and whose slope has linear growth.

Let us now check that $f_{\delta,n}$ satisfies the assumptions of Lemma~\ref{lem4.4revised}. Indeed, $|D \log f_{\delta,n}| \leq C(1+\sfd(x,x_0))$ 
whenever $\sfd(x,x_0)>2n$, because in this set $f_{\delta_n}$ coincides with $c_{\delta,n}\delta\rme^{-2c\Wgh^2}$; in addition, the locality of weak
gradients and the partition  $X=\{\chi^2_n \eta>\delta\rme^{-2c\Wgh^2} \} \cup \{\chi^2_n \eta\leq \delta\rme^{-2c\Wgh^2}\}$ ensure
that $\weakgrad{\log f_{\delta,n}}\in L^2(X,\eta_{\delta,n})$ because  the finiteness of $\C(\sqrt{f})$ ensures that $\weakgrad{\log f}\in L^2(X,\eta)$.

Observe that the convexity of $z\mapsto z \log z$ gives
\begin{equation}\label{eq:logpi}
\frac{\entv(\mu_t)-\entv(\eta_{\delta,n})}{t} \geq \int_X \log f_{\delta,n} \frac{f_t-f_{\delta,n}}{t} \,\d \mm = 
\int \frac{\log(f_{\delta,n}\circ\e_t)-\log(f_{\delta,n} \circ\e_0)}{t}\, \d \ppi.
\end{equation}
Define the functions $F_t,\,G_t: AC^2([0,1];X)\to \R$ as
\begin{equation}
F_t(\gamma):=\frac{\log(f_{\delta,n} \circ\e_0)-\log(f_{\delta,n} \circ \e_t)}{\sfd(\gamma_0, \gamma_t)} \label{def:Ft},\qquad
G_t(\gamma):=\frac{\varphi\circ\e_0-\varphi\circ\e_t}{\sfd(\gamma_0, \gamma_t)}.
\end{equation}
Multiplying and dividing the right hand side of \eqref{eq:logpi} by $\sfd(\gamma_0,\gamma_t)$ we obtain
\begin{equation}\label{QLastEq}
\liminf_{t \down 0} \frac{\entv(\mu_t)-\entv(\eta_{\delta,n} \mm)}{t}\geq 
- \limsup_{t\down 0} \int F_t(\gamma) \frac{\sfd(\gamma_0,\gamma_t)}{t} \d \ppi(\gamma).
\end{equation} 
Now we claim that
\begin{equation}\label{QLastEqbis}
- \limsup_{t\down 0} \int F_t(\gamma) \frac{\sfd(\gamma_0,\gamma_t)}{t} \d \ppi(\gamma)= -\limsup_{t \down 0} \int F_t G_t \d \ppi.
\end{equation} 
The proof of \eqref{QLastEqbis} follows at once by
\begin{equation}\label{eq:chepalle}
\lim_{t\down 0 } \int\left| G_t(\gamma)-\frac{\sfd(\gamma_0,\gamma_t)}{t}\right |^2\,\d\ppi=0
\quad \text{and} \quad \sup_{t\leq t_0} \int |F_t|^2\,\d\ppi<\infty.
\end{equation}
The first fact in \eqref{eq:chepalle} is ensured by (2) of Lemma~\ref{lem4.4revised}, as well as the
identity
\begin{equation}\label{eq:logope}
\int \weakgrad{\varphi}^2\circ\e_0\,\d \ppi=\lim_{t\downarrow 0}\int |G_t|^2 \d\ppi.
\end{equation}
The second fact in \eqref{eq:chepalle} is ensured by (1) of the same lemma applied 
to $h=\log f_{\delta,n}$. Combining \eqref{QLastEq} and \eqref{QLastEqbis} we get
\begin{equation}\label{QLastEqter}
\liminf_{t \down 0} \frac{\entv(\mu_t)-\entv(\eta_{\delta,n} \mm)}{t}\geq 
-\limsup_{t \down 0} \int F_t G_t \d \ppi.
\end{equation} 
Now, applying Lemma~\ref{lem4.4revised} to $h=\varphi+\epsilon \log f_{\delta,n}$ gives that
\begin{equation}
\int \weakgrad{(\varphi+\epsilon \log f_{\delta,n})}^2\circ\e_0 \d \ppi \geq
\limsup_{t\down 0} \int |G_t(\gamma)+\epsilon F_t(\gamma)|^2 \d \ppi(\gamma) \label{eq:HorVert}.
\end{equation}
Subtracting to \eqref{eq:HorVert} the equality \eqref{eq:logope} and dividing by $\epsilon$ gives
\begin{equation}\label{lastEq}
\limsup_{t\down 0} \int G_t F_t \,\d \ppi \leq \liminf_{\epsilon \down 0} 
\int_X \frac{\weakgrad{(\varphi+\epsilon \log f_{\delta,n})}^2-\weakgrad{\varphi}^2}{2\epsilon} f_{\delta,n} \,\d \mm
= \mathcal E_{\eta_{\delta,n}} (\log f_{\delta,n},\varphi),
\end{equation}
where we used again the uniform bound on the $L^2$ norm of $F_t$.
Combining  \eqref{QLastEqter} and \eqref{lastEq} we obtain
\begin{equation}\label{eq:Last1}
\liminf_{t \down 0} \frac{\entv(\mu_t)-\entv(\eta_{\delta,n})}{t}\geq - \mathcal E_{\eta_{\delta,n}} (\log f_{\delta,n},\varphi).
\end{equation}
The conclusion follows by \eqref{eq:Last1} recalling that, by construction, the entropy is $K$-convex along the geodesic $(\mu_t)_{t\in [0,1]}$, see \eqref{eq:CDdef}. 
\end{proof}

\noindent
{\bf Proof of Theorem~\ref{Thm:DerEntr}.} In this proof we denote for brevity $a\vee b=\max\{a,b\}$. For every $\delta\in (0,1)$ define the density
\begin{equation}\label{def:etadelta}
\tilde{f}_{\delta}:=f \vee (\delta \rme^{-2c\Wgh^2}) \quad\text{and } f_{\delta}:=c_{\delta} \tilde{f}_{\delta} \text{ with $c_\delta\uparrow 1$
as $\delta\downarrow 0$} 
\end{equation}
(here $c>0$ is the constant in \eqref{eq:growthcond}),
so that $\tilde{f}_\delta \geq f $ and $c_{\delta}$ are the normalizing constants. We need a further regularization of $f_\delta$; to this aim,
let $\chi_n$ be standard cut-off functions, namely 
$0 \leq \chi_n \leq 1$, $\Lip (\chi_n) \leq 1$, $\chi_n\equiv 1$ on  $B_n(x_0)$ and $\chi_n\equiv 0$ on $B^c_{2n}(x_0)$. Then,
for every $n>1,\,\delta>0$ we define the densities
\begin{equation}\label{def:etadeltan}
\tilde{f}_{\delta,n}:= (\chi^2_n f) \vee (\delta \rme^{-2c\Wgh^2}) \quad\text{and } f_{\delta,n}:=c_{\delta,n} \tilde{f}_{\delta,n} \text{ with } \; c_{\delta,n}\downarrow c_\delta \text{ as } n\to\infty, 
\end{equation}
so that $\tilde{f}_{\delta,n} \leq \tilde{f}_\delta $ and $c_{\delta,n}$ are the normalizing constants.
Of course $f_{\delta,n}$ is uniformly bounded and $\eta_{\delta,n}:=f_{\delta,n} \mm \in \ProbabilitiesTwo X$, moreover $\C(\sqrt{f_{\delta,n}})$ is
finite. Indeed by the chain rule and the locality of the weak gradients we have that
\begin{eqnarray*}
\weakgrad{\sqrt{f_{\delta,n}}} & =&  \sqrt{c_{\delta,n}}|D(\chi_n \sqrt{f})|_w\\
  &\leq& \sqrt{c_{\delta,n}} \left( \chi_n \weakgrad{\sqrt{f}}+ \sqrt{f}\weakgrad{\chi_n} \right) \quad\text{if $\chi^2_nf \geq \delta \rme^{-2c\Wgh^2}$}\\
\weakgrad{\sqrt{f_{\delta,n}}} & =& \sqrt{\delta\, c_{\delta,n} } \weakgrad{\rme^{-2c \Wgh^2}} \\
& \leq& 4c \sqrt{\delta \,c_{\delta,n}}\, \sfd(\cdot,x_0) \,e^{-2c \Wgh^2} \quad\text{otherwise}.
\end{eqnarray*}
Since by assumption $\C(\sqrt{f}) < \infty$, it follows not only that $\weakgrad{\sqrt{f_{\delta,n}}}^2$ are uniformly
bounded in $L^1(X,\mm)$, but also that they are equi-integrable:
\begin{equation}\label{eq:equiBoundF}
\sup_{\delta\in (0,1),\, n \in \N } \C(\sqrt{f_{\delta,n}})< \infty\quad\text{and}\quad
E_j\downarrow\emptyset\,\,\Rightarrow\sup_{\delta\in (0,1),\,n\in\N}\int_{E_j}\weakgrad{\sqrt{f_{\delta,n}}}^2\,\d\mm\to 0.
\end{equation} 
Observe that $(\eta_{\delta,n},\sigma)$ has the structure described in Proposition~\ref{Prop1}, so there exists a Kantorovich potential 
$\varphi_{\delta,n}$ from $\eta_{\delta,n}$ to $\sigma$ satisfying the growth conditions \eqref{growthPhidn}
and such that the entropy inequality holds:
\begin{equation}\label{eq:EntEstAppr}
\entv(\sigma)-\entv(\eta_{\delta,n})- \frac{K}{2} W_2^2(\eta_{\delta,n},\sigma) 
\geq -\mathcal E_{\eta_{\delta,n}}(\varphi_{\delta,n},\log f_{\delta,n}).
\end{equation}

\noindent
{\bf Passage to the limit as $n\to\infty$.} Consider the transportation problem from $\eta_\delta:=f_\delta \mm$ to $\sigma$. 
We claim the existence of a Kantorovich potential $\varphi_\delta$ such that 
\begin{equation}\label{eq:EntEstdelta}
\entv(\sigma)-\entv(\eta_{\delta})- \frac{K}{2} W_2^2(\eta_{\delta},\sigma) \geq -\mathcal E_{\eta_{\delta}}(\varphi_\delta,\log f_{\delta}).
\end{equation}
We would like to pass to the limit as $n\to\infty$ in \eqref{eq:EntEstAppr}. Let us start by considering the left hand side: applying 
Lemma~\ref{lem:ConvEnt}  to $\tilde{\eta}_{\delta,n}\up \tilde{\eta}_{\delta}$ $\mm$-a.e,  and recalling that $c_{\delta,n}\down c_{\delta}$ as $n\to \infty$, 
we get
\begin{equation}\label{eq:ConvEnteta}
\entv (\eta_{\delta,n}) \to  \entv (\eta_\delta) \quad \text{as } n\to \infty.
\end{equation}
It is easy to check that $\eta_{\delta,n} $ weakly converge to $\eta_{\delta}$ and have uniformly integrable 2-moments, so by 
\cite[Proposition~7.1.5]{Ambrosio-Gigli-Savare08} we have 
\begin{equation}\label{eq:ConvW}
\lim_{n\to\infty} W_2^2(\eta_{\delta,n} ,\sigma) = W_2^2(\eta_\delta,\sigma).
\end{equation}
Now let us show the convergence of the right hand side of \eqref{eq:EntEstAppr}. To simplify the problem we prove first that
\begin{equation}\label{eq:etadnetad}
\lim_{n\to\infty} \bigl| \mathcal E_{\eta_{\delta,n}}(\varphi_{\delta,n},\log f_{\delta,n})-\frac{c_{\delta,n}}{c_{\delta}}
\mathcal E_{\eta_{\delta}}(\varphi_{\delta,n},\log f_{\delta}) \bigr|=0.
\end{equation}
Notice that, calling $A_\delta:=\{x\in X:\ f(x)\geq \delta\rme^{-2c\Wgh^2(x)} \}$ we have $f_{\delta,n}=\frac{c_{\delta,n}}{c_\delta}f_\delta$
on the complement $(A_{\delta}\cap B_n(x_0))\cup A_\delta^c$
of $A_\delta\setminus B_n(x_0)$.  Since $A_\delta\setminus B_n(x_0)\downarrow\emptyset$
we can use \eqref{eq:essential} of Lemma~\ref{lem:essential} to obtain \eqref{eq:etadnetad}, taking
\eqref{eq:equiBoundF} into account.

{F}rom \eqref{eq:etadnetad}, and taking into account that $c_{\delta,n}\to c_\delta$ as $n\to\infty$, 
 in order to prove the convergence of the right hand side of \eqref{eq:EntEstAppr}, it is enough to show the existence of
a Kantorovich potential $\varphi_\delta$ for $(\eta_\delta,\sigma)$ such that
\begin{equation}\label{reduct:convEdn}
\mathcal E_{\eta_\delta} (\varphi_{\delta,n}, \log f_{\delta}) \to \mathcal E_{\eta_\delta} (\varphi_{\delta}, \log f_{\delta})\quad \text{as } n\to \infty. 
\end{equation}
Now we use in a crucial way Lemma~\ref{lem:GammaConvKant}, which ensures the existence of a Kantorovich potential $\varphi_\delta$ 
for $(\eta_\delta,\sigma)$ and of a subsequence $n(k)$ such that $\varphi_{\delta,n(k)} \to \varphi_{\delta}$ pointwise in $X$. Recalling
that $|\varphi_{\delta,n}|\leq C(1+\Wgh^2)$ and that $\int |D \varphi_{\delta,n}|_w^2 \,\d\eta_\delta$ is uniformly bounded, 
we are in position to apply Lemma~\ref{lem:WeakConvK} and to conclude that \eqref{reduct:convEdn} holds. 
Therefore we proved the convergence of all terms in \eqref{eq:EntEstAppr}, so that \eqref{eq:EntEstdelta} holds.

\noindent
{\bf Passage to the limit as $\delta\downarrow 0$.} The inequality \eqref{eq:EntEstdelta} passes to the limit as $\delta\down 0$: more precisely,
 we claim the existence of a Kantorovich potential $\varphi$ from $f\mm$ to $\sigma$ such that 
\begin{equation}\label{eq:EntEstdel}
\entv(\sigma)-\entv(\eta)- \frac{K}{2} W_2^2(\eta,\sigma)\geq -\mathcal E_\eta(\varphi,\log f).
\end{equation}
As in the passage to the limit as $n\to\infty$,  Lemma~\ref{lem:ConvEnt} easily implies that
$\entv(\eta_{\delta})\to\entv(\eta)$,
moreover it is easy to check that $\eta_\delta$ weakly converge to $\eta$ and have uniformly integrable 2-moments, so 
\cite[Proposition~7.1.5]{Ambrosio-Gigli-Savare08} gives
$W_2(\eta_\delta,\sigma)\to W_2(\eta,\sigma)$.
In order to show the convergence of the right hand side of \eqref{eq:EntEstdel} we first prove that
\begin{equation}\label{eq:EdE}
\lim_{\delta\down 0}|\mathcal E_{\eta_\delta}(\varphi_\delta, \log f_\delta)-c_\delta\mathcal E_{\eta} (\varphi_\delta, \log f)|=0.
\end{equation}
First of all notice that, after calling $A_\delta:=\{x\in X: f(x)\geq \delta\rme^{-2c \Wgh^2(x)} \}$, we have $f_\delta=c_\delta f$
on $A_\delta$. Since $X\setminus A_\delta\downarrow \{f=0\}$ as $\delta\downarrow 0$ and $\weakgrad{f}=0$
$\mm$-a.e. on $\{f=0\}$,  we can use \eqref{eq:essential} 
of Lemma~\ref{lem:essential} to show \eqref{eq:EdE}, taking \eqref{eq:equiBoundF} into account.

Now that \eqref{eq:EdE} is proved, taking into account that $c_\delta\to 1$ as $\delta\downarrow 0$, 
it is enough to prove the existence of a Kantorovich potential $\varphi$ from $\eta$ to $\sigma$ such that
\begin{equation}\label{eq:QF}
\lim_{i\to\infty}  \mathcal E_\eta (\varphi_{\delta_i}, \log f)=\mathcal E_\eta (\varphi, \log f).
\end{equation}
for some sequence $\delta_i\downarrow 0$.
Recall that $\varphi_\delta$ were constructed using Lemma~\ref{lem:GammaConvKant}, so they still satisfy the growth condition \eqref{growthPhidn}; 
applying again Lemma~\ref{lem:GammaConvKant} we get the existence of a Kantorovich potential $\varphi$ from $\eta$ to $\sigma$ and
$\delta_i\downarrow 0$ such that 
$\varphi_{\delta_i}\to \varphi$ pointwise in $X$ as $i \to \infty$. Moreover, by  \eqref{eq:itforza}
and $f\leq c^{-1}_\delta f_\delta\leq 2f_\delta$ 
for $\delta$ small enough, we have 
$$\int_X \weakgrad{\varphi_{\delta_i}}^2\; f\,\d \mm \leq 2\int_X \weakgrad{\varphi_{\delta_i}}^2 f_{\delta_i}\, \d \mm \leq 2 
W^2_2(\eta_{\delta_i},\sigma),$$ 
for $i$ large enough. Hence we can apply Lemma~\ref{lem:WeakConvK} and conclude that \eqref{eq:QF} holds.
Therefore \eqref{eq:EntEstdel} is proved and the proof of Theorem~\ref{Thm:DerEntr} is then complete.
\hfill$\Box$

\subsection{Proof of Theorem~\ref{thm:main1}.} 

The implications from (i) to (ii) and from (iii) to (i) can be proven exactly
as in Theorem~5.1 of \cite{Ambrosio-Gigli-Savare11b} (as these proofs need no finiteness assumption on $\mm$), 
so let us focus on the implication from (ii) to (iii). Note that Sturm has proven in \cite{Sturm06I} (see Remark~4.6(iii) therein)
that $\supp\mm$ is a length space for all $CD(K,\infty)$ spaces $(X,\sfd,\mm)$ (his proof, based on an approximate
midpoint construction, does not use the local compactness).

It remains to show that the $EVI_K$-condition holds assuming
the $CD(K,\infty)$ condition and the fact that $\C$ is quadratic.
By the contractivity properties of $EVI_K$-gradient flows stated in Proposition~\ref{prop:evipropr} it is sufficient to
show that $\mu_t:=({\sf h}_tf)\mm$ is an $EVI_K$ gradient flow for $\entv$
for any initial measure $f\mm\in\probt{X}$ whose density $f$ is bounded and
satisfies $\C(\sqrt{f})<\infty$. By the maximum principle
proven in \cite{Ambrosio-Gigli-Savare11} (see Theorem~4.20 therein) one has
${\sf h}_tf\leq\|f\|_{L^\infty(X,\mm)}$ $\mm$-a.e. in $X$ for all $t\geq 0$, furthermore
$\{\mu_t:\ t\in [0,T]\}$ is a bounded subset of $\probt{X}$ for all $T>0$
and \eqref{eq:edissrateflow} gives
\begin{equation}
\int_0^\infty \C(\sqrt{{\sf h}_t f})\,\dt<\infty.
\end{equation} 
By a simple density argument on the class of ``test'' measures $\sigma$ in \eqref{eq:EVI}
(see for instance \cite[Proposition~2.20]{Ambrosio-Gigli-Savare11b}), we can restrict ourselves to measures
$\sigma$ of the form $g\mm$ with $g\in L^\infty(X,\mm)$ and $\supp\sigma$ compact.

By \eqref{eq:derw2} of Theorem~\ref{thm:derw2} we get that for a.e. $t>0$, for any choice
of a Kantorovich potential $\varphi_t$ from $\mu_t$ to $\sigma$ whose slope has linear growth,
one has
\begin{equation}
\label{eq:derw22} \frac{\d}{\d t}\frac12W_2^2(\mu_t,\sigma)=
-\mathcal E_{\mu_t}(\varphi_t,\log {\sf h}_tf).
\end{equation}
Therefore, to conclude that \eqref{eq:EVI} holds, it suffices to show for a.e. $t>0$ the existence of
a Kantorovich potential $\varphi_t$ from $\mu_t$ to $\sigma$ whose slope has linear growth and
satisfies
\begin{equation}
-\mathcal E_{\mu_t}(\varphi_t,\log {\sf h}_t f)\leq \entv(\sigma)-\entv(\mu_t)- \frac{K}{2} W_2^2(\mu_t,\sigma). 
\end{equation}
This is precisely the statement of Theorem~\ref{Thm:DerEntr} (with $\eta=\mu_t$) 
and this concludes the proof. \hfill$\Box$

\section{Properties of $RCD(K,\infty)$ spaces}\label{sec:last}

In this section we state without proof some properties of $RCD(K,\infty)$ spaces whose proofs, given by the first two authors
and Savar\'e in \cite{Ambrosio-Gigli-Savare11b}. Their proofs do not rely on the finiteness assumption of $\mm$. Refer to \cite{Ambrosio-Gigli-Savare11b}
for details of proofs and a more complete discussion. 

\subsection{The heat semigroup and its regularizing properties}

In this section we describe more in detail the properties of the
$L^2$-semigroup ${\sf h}_t$ in a $RCD(K,\infty)$ space
and the additional information that one can
obtain from the identification with
$W_2$-semigroup $\heatw_t$. 
By the definition of $RCD(K,\infty)$ spaces, we know that for any $x\in X$ there
exists a unique $EVI_K$ gradient flow $\heatw_t(\delta_x)$ of
$\entv$ starting from $\delta_x$, related to ${\sf h}_t$ by
\begin{equation}\label{eq:heatlheatw}
({\sf h}_t f)\mm=\int f(x)\,\heatw_t(\delta_x)\,\d\mm(x)
\qquad\forall f\in L^2(X,\mm).
\end{equation}
Since $\entv(\ke xt)<\infty$ for any $t>0$, one has $\ke
xt\ll\mm$, so that $\ke xt$ has a density, that we shall denote by
$\ked xt$. The functions $\ked xt (y)$ are the so-called transition
probabilities of the semigroup. By standard measurable selection
arguments we can choose versions of these densities in such a way
that the map $(x,y)\mapsto\ked xt(y)$ is $\mm\times\mm$-measurable
for all $t>0$.

In the next theorem we prove additional properties of the flows. The
information on both benefits of the identification theorem: for
instance the symmetry property of transition probabilities is not at
all obvious when looking at $\heatw_t$ only from the optimal
transport point of view, and heavily relies on
\eqref{eq:heatlheatw}. On the other hand, the regularizing
properties of ${\sf h}_t$ are deduced by duality by those of
$\heatw_t$, using in particular the contractivity estimate
\begin{equation}
\label{eq:contrw2} W_2(\heatw_t(\mu),\heatw_t(\nu))\leq
e^{-Kt}W_2(\mu,\nu)\qquad t\geq0,\ \mu,\,\nu\in\probt {X,\mm}
\end{equation}
and the regularization estimates for the Entropy and its slope
\begin{equation}
  \label{eq:24}
  \mathrm I_K(t)\ent{\heatw_t(\mu)}+\frac {(\mathrm I_K(t))^2}2|D^- \entv|^2(\heatw_t(\mu))
  \le \frac 12 W_2^2(\mu,\mm)
\end{equation}
which are typical of $EVI_K$-solutions, with $\mathrm I_K(t):=\int_0^t \rme^{K r}\,\d r$.
Notice also that \eqref{eq:contrw2} yields $
W_1(\heatw_t(\delta_x),\heatw_t(\delta_y))\le\rme^{-Kt}\sfd(x,y)$
for all $x,\,y\in X$ and $t\geq 0$. This implies that $RCD(K,\infty)$
spaces have Ricci curvature bounded from below by $K$
according to the $W_1$-contractivity property taken as definition in Ollivier \cite{Ollivier09} and Joulin \cite{Joulin07}.

\begin{theorem}[Regularizing properties of the heat flow]\label{thm:facili}
(Theorem~6.1 in {\rm \cite{Ambrosio-Gigli-Savare11b}}) Let $(X,\sfd,\mm)$ be a $RCD(K,\infty)$ space. Then:
\begin{itemize}
\item[(i)] The transition probability densities are symmetric
\begin{equation}\label{eq:symmetrictp}
\ked xt (y)=\ked yt (x)\qquad\text{$\mm\times\mm$-a.e.~in $X\times
X$, for all $t>0,$}
\end{equation}
and satisfy for all $x\in X$ the Chapman-Kolmogorov formula:
\begin{equation}
\label{eq:chapman} \ked x{t+s}(y)=\int\ked xt (z)\ked zs
(y)\,\d\mm(z)\qquad\text{for $\mm$-a.e.~$y\in X$, for all $t,\,s\geq
0$.}
\end{equation}
\item[(ii)] The formula
\begin{equation}
\label{eq:l1} \tilde{\sf h}_tf(x):=\int f(y)\,\d\ke xt(y)\qquad
x\in X
\end{equation}
provides a version of ${\sf h}_t f$ for every $f\in L^2(X,\mm)$, an
extension of ${\sf h}_t$ to a continuous contraction semigroup in
$L^1(X,\mm)$ which is pointwise everywhere defined if $f\in
L^\infty(X,\mm)$.
\item[(iii)] The semigroup $\tilde{\sf h}_t$ maps contractively $L^\infty(X,\mm)$ in
$C_b(X)$ and, in addition, $\tilde{\sf h}_tf(x)$ belongs to $
C_b\bigl((0,\infty)\times X\bigr)$.
\item[(iv)] If $f:X\to\R$ is Lipschitz, then $\tilde{\sf h}_tf$ is Lipschitz on $X$
as well and $\Lip(\tilde{\sf h}_tf)\leq e^{-Kt}\Lip(f)$.
\end{itemize}
\end{theorem}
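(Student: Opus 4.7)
The plan is to prove the four properties in the order (ii), (i), (iv), (iii), since each step rests on the previous. Throughout, the quadratic structure of $\C$ on $L^2(X,\mm)$ is what promotes $\sf h_t$ to a self-adjoint semigroup, while the $EVI_K$ contractivity \eqref{eq:contrw2} controls the $W_2$-dependence of $\ke xt$ on $x$.

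For (ii), the formula \eqref{eq:l1} is a direct rewriting of \eqref{eq:heatlheatw} using the disintegration $\ke xt = \ked xt\,\mm$, which is available for every $t>0$ because $\REGname(\ke xt)<\infty$ by \eqref{eq:24}; since $\ke xt$ is a probability measure, $\tilde{\sf h}_t f(x)$ is everywhere defined when $f\in L^\infty(X,\mm)$, and coincides $\mm$-a.e.\ with $\sf h_t f$. The extension to a contraction on $L^1(X,\mm)$ is by duality: for $f\in L^1\cap L^2$ and $g\in L^\infty\cap L^2$, self-adjointness of $\sf h_t$ (proved in (i)) together with the obvious bound $\|\tilde{\sf h}_t g\|_\infty\le\|g\|_\infty$ gives $|\int(\sf h_tf)g\,\d\mm|\le\|f\|_1\|g\|_\infty$, hence $\|\sf h_tf\|_1\le\|f\|_1$; strong continuity in $L^1$ follows by density and the contraction property.

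For (i), the symmetry is the kernel expression of self-adjointness $\int(\sf h_t f)g\,\d\mm=\int f(\sf h_t g)\,\d\mm$ on $L^2(X,\mm)$, itself a consequence of $\C$ being a quadratic, lower semicontinuous form on a Hilbert space (the gradient semigroup of such a form is self-adjoint by the spectral theorem). Substituting \eqref{eq:l1} and applying Fubini yields
\[
\int\!\!\int f(y)g(x)\ked xt(y)\,\d\mm(y)\,\d\mm(x)=\int\!\!\int f(y)g(x)\ked yt(x)\,\d\mm(y)\,\d\mm(x),
\]
and the arbitrariness of $f,g$ gives \eqref{eq:symmetrictp}. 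Chapman--Kolmogorov \eqref{eq:chapman} is the semigroup identity $\sf h_{t+s}=\sf h_t\sf h_s$ re-expressed via \eqref{eq:l1}. For (iv), given $f$ with $\Lip(f)=L$, Kantorovich--Rubinstein duality and $W_1\le W_2$ give
\[
|\tilde{\sf h}_tf(x)-\tilde{\sf h}_tf(y)|\le L\,W_1(\ke xt,\ke yt)\le L\,W_2(\ke xt,\ke yt)\le L\rme^{-Kt}\sfd(x,y),
\]
where the final bound comes from applying the $EVI_K$-contraction \eqref{eq:contrw2} to $\mu=\delta_x$, $\nu=\delta_y$.

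Finally (iii). The $L^\infty$-contraction is immediate from the fact that $\ke xt$ is a probability measure. Spatial continuity is already proved in (iv) for Lipschitz $f$; for general $f\in L^\infty(X,\mm)$, the strategy is to approximate $f$ $\mm$-a.e.\ by bounded Lipschitz functions $f_n$ with $|f_n|\le\|f\|_\infty$ and to pass to the limit under the integral $\int f_n\,\d\ke xt$. For this we need that the kernel measure $\ke xt$ is sufficiently regular at each fixed $x$ to make $\mm$-a.e.\ convergence imply convergence in integral: this is provided by $\ke xt\ll\mm$ together with dominated convergence (since $\ked xt\in L^1(X,\mm)$). Joint continuity in $(t,x)$ on $(0,\infty)\times X$ is then obtained from spatial continuity combined with the $W_2$-continuity of $t\mapsto\ke xt$ (absolute continuity of the curve, built into the $EVI_K$-definition) and a triangle inequality splitting $\tilde{\sf h}_tf(x)-\tilde{\sf h}_sf(y)$. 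The main obstacle I expect is precisely this step for non-Lipschitz bounded $f$: $W_2$-continuity of $x\mapsto\ke xt$ only controls integration against Lipschitz test functions, so the argument must genuinely exploit that $\ke xt$ has a density and that $\mm$-a.e.\ approximants can be converted into integral limits, uniformly enough in $x$ to preserve continuity.
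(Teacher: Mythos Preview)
The paper does not supply a proof here; it quotes Theorem~6.1 of \cite{Ambrosio-Gigli-Savare11b} and observes that the argument there carries over to the $\sigma$-finite setting. Your outline for (i), (ii) and (iv) is correct and matches that reference: self-adjointness of ${\sf h}_t$ (from the quadraticity of $\C$) yields kernel symmetry, the semigroup law at the Wasserstein level (needed to get the statement for \emph{every} $x$, not merely $\mm$-a.e.\ $x$) yields Chapman--Kolmogorov, and Kantorovich--Rubinstein combined with \eqref{eq:contrw2} yields (iv).

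The genuine gap is the one you already flag in (iii). Approximating $f\in L^\infty(X,\mm)$ by Lipschitz $f_n$ converging $\mm$-a.e.\ and using dominated convergence at each fixed $x$ only shows that $\tilde{\sf h}_tf$ is a \emph{pointwise} limit of continuous functions, which does not imply continuity. The missing ingredient is the entropy regularization \eqref{eq:24}: for $x$ ranging in a bounded set and $t>0$ fixed, $\entv(\ke xt)$ is bounded uniformly in $x$, so (via \eqref{eq:changeentropy} and de la Vall\'ee Poussin) the family of densities $\{\ked xt\}_x$ is equi-integrable. Now if $x_n\to x$, \eqref{eq:contrw2} gives $\ke{x_n}t\to\ke xt$ in $W_2$, hence weakly; equi-integrability upgrades this (Dunford--Pettis plus identification of the limit against $C_b$ test functions) to weak convergence of $\ked{x_n}t$ to $\ked xt$ in $L^1(X,\mm)$, which is precisely the statement that $\int f\,\d\ke{x_n}t\to\int f\,\d\ke xt$ for every $f\in L^\infty(X,\mm)$. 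Joint continuity in $(t,x)$ then follows by the same mechanism once one also uses the $W_2$-continuity of $t\mapsto\ke xt$.
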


\begin{theorem}[Bakry-\'Emery in $RCD(K,\infty)$ spaces]\label{thm:bakryemery}
(Theorem~6.2 in {\rm \cite{Ambrosio-Gigli-Savare11b}}) For any $f\in L^2(X,\mm)\cap\calW$ and $t>0$ we have
\begin{equation}\label{eq:bakryemery}
\weakgrad{({\sf h}_tf)}^2\leq
\rme^{-2Kt}{\sf h}_t(\weakgrad{f}^2)\qquad\text{$\mm$-a.e.~in $X$.}
\end{equation}
In addition, if
 $\weakgrad{f}\in L^\infty(X,\mm)$ and $t>0$, then
$\rme^{-Kt}\bigl(\tilde{\sf h}_t\weakgrad{f}^2\bigr)^{1/2}$ is an
upper gradient of $\tilde{\sf h}_tf$ on $X$, so that
\begin{equation}
\text{$|D^- \tilde{\sf h}_tf|\leq\rme^{-Kt}
\bigl(\tilde{\sf h}_t\weakgrad{f}^2\bigr)^{1/2}$\quad pointwise in
$X$,} \label{eq:17}
\end{equation}
and $f$ has a Lipschitz version $\tilde{f}:X\to\R$,
with ${\rm Lip}(\tilde{f})\leq\|\weakgrad{f}\|_\infty$.
\end{theorem}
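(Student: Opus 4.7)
The plan is to deduce Theorem~\ref{thm:bakryemery} from the Wasserstein contractivity of the heat flow via Kuwada's duality argument. The $RCD(K,\infty)$ hypothesis, together with Theorem~\ref{thm:heatgf}, gives an $EVI_K$ gradient flow of $\entv$ in $(\probt X, W_2)$ coinciding with the semigroup $\heatw_t$, with $\heatw_t(f\mm) = ({\sf h}_t f)\mm$. Proposition~\ref{prop:evipropr}(ii) then yields
\[
W_2(\heatw_t\mu, \heatw_t\nu) \leq \rme^{-Kt}\, W_2(\mu,\nu) \qquad \forall\, \mu,\nu \in \probt X,\ t \geq 0.
\]

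Next I would carry out Kuwada's duality step to convert this contractivity into the pointwise gradient bound \eqref{eq:17}. Assume first that $f$ is bounded and Lipschitz with $\weakgrad f \in L^\infty(X,\mm)$. Given any absolutely continuous curve $\gamma:[0,1]\to X$, define the curve of measures $\mu_s := \heatw_t\delta_{\gamma_s}$ in $\probt X$. By the contractivity above applied to $\delta_{\gamma_s}$ and $\delta_{\gamma_{s+h}}$, the map $s\mapsto\mu_s$ is $W_2$-absolutely continuous with metric derivative bounded by $\rme^{-Kt}|\dot\gamma_s|$ a.e. Using the representation $\tilde{\sf h}_t f(\gamma_s) = \int f\,\d\mu_s$ from \eqref{eq:heatlheatw}, the Sobolev regularity of $s\mapsto \int f\,\d\mu_s$ along the test plan giving rise to $(\mu_s)$ (in the spirit of Remark~\ref{rem:charaweakgrad} and Lemma~\ref{lem4.4revised}(1)), and Cauchy--Schwarz together with Jensen's inequality to pull the square inside the heat kernel, I would derive
\[
\bigl|\tilde{\sf h}_t f(\gamma_1)-\tilde{\sf h}_t f(\gamma_0)\bigr| \;\leq\; \rme^{-Kt}\int_0^1 \bigl(\tilde{\sf h}_t(\weakgrad f^2)(\gamma_s)\bigr)^{1/2}\,|\dot\gamma_s|\,\d s.
\]
This says exactly that $\rme^{-Kt}(\tilde{\sf h}_t(\weakgrad f^2))^{1/2}$ is an upper gradient of $\tilde{\sf h}_t f$, which gives \eqref{eq:17}; global boundedness of the right-hand side when $\weakgrad f\in L^\infty$ then yields the Lipschitz representative with $\Lip(\tilde f)\leq\|\weakgrad f\|_\infty$.

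To obtain the $\mm$-a.e. estimate \eqref{eq:bakryemery} for a general $f\in L^2(X,\mm)\cap\calW$, I would approximate $f$ by bounded Lipschitz functions $\phi_n$ with $|D\phi_n|\to\weakgrad f$ in $L^2(X,\mm)$ via Theorem~\ref{thm:equivalence}(b), apply the established estimate to $\phi_n$ (after truncation to keep the right-hand side integrable), and pass to the limit using $L^2$-continuity of ${\sf h}_t$, the symmetry of the transition probabilities from Theorem~\ref{thm:facili}(i) to dualize against nonnegative test weights $\phi\in L^1\cap L^\infty$, and the lower semicontinuity of weak gradients under $L^2$-convergence \cite[Theorem~5.12]{Ambrosio-Gigli-Savare11}. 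The main obstacle is making the Kuwada computation rigorous without access to pointwise slopes: one must carefully justify the Sobolev property of $s\mapsto \int f\,\d\mu_s$ together with the Cauchy--Schwarz bound purely at the test-plan level, and establish the measurability and integrability of $s\mapsto \tilde{\sf h}_t(\weakgrad f^2)(\gamma_s)$ by a Fubini argument in the $(s,y)$-plane in the spirit of the proof of Lemma~\ref{lem4.4revised}(1).
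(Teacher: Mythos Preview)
The paper does not actually prove this theorem: Section~\ref{sec:last} explicitly states these results \emph{without proof}, referring to Theorem~6.2 of \cite{Ambrosio-Gigli-Savare11b} and remarking only that the arguments there ``do not rely on the finiteness assumption of $\mm$''. So there is no proof in the present paper to compare against.

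That said, your outline via Kuwada's duality --- Wasserstein contractivity from $EVI_K$, then lifting $s\mapsto\heatw_t\delta_{\gamma_s}$ to a plan on curves and using the weak upper gradient property together with Cauchy--Schwarz --- is exactly the strategy employed in \cite{Ambrosio-Gigli-Savare11b} (building on \cite{GigliKuwadaOhta10}), so you have identified the correct route. The one point you flag as an obstacle is indeed the genuine technical issue: to invoke Remark~\ref{rem:charaweakgrad} on the lifted plan you need it to be a $2$-test plan, i.e.\ you must know that the marginals $\heatw_t\delta_{\gamma_s}$ have densities uniformly bounded in $L^\infty(X,\mm)$ as $s$ ranges over $[0,1]$. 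This is not automatic from $\entv(\heatw_t\delta_x)<\infty$ alone; in \cite{Ambrosio-Gigli-Savare11b} it is handled by first establishing the estimate under the additional hypothesis that the heat kernel densities are locally uniformly bounded (via the $L^\infty$ regularization of Theorem~\ref{thm:facili}(iii) and entropy bounds), and then removing that hypothesis by approximation. Your sketch acknowledges the difficulty but does not indicate how you would secure the bounded-compression property, so that step would need to be filled in.
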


The regularization properties \eqref{eq:24} of $EVI_K$-flows provide
an $L\log L$ regularization of the semigroup $\heatw_t$ 
starting from arbitrary measures in $\probt{X}$.
When $X$ is a $RCD(K,\infty)$-space with $K>0$, then 
combining the slope inequality for $K$-geodesically convex functionals
\cite[Lemma 2.4.13]{Ambrosio-Gigli-Savare08} 
\begin{displaymath}
	\ent\mu\le \frac 1{2K}|D^-\entv|^2(\mu)
\end{displaymath}
with the identity $|D^-\entv|^2(f\mm)=\int\weakgrad{f}^2/f\,\d\mm$ between slope and Fisher information,
 we get
the Logarithmic-Sobolev inequality 
\begin{equation}
	\label{eq:+logsob}
	\int_X f\log f\,\d\mm \le \frac 1{2K} \int_{f>0} \frac{\weakgrad f^2}f\,\d\mm
	\quad\text{if }\sqrt f\in W^{1,2}(X,\sfd,\mm),\ f\mm\in \prob X,
\end{equation} 
which in particular yields the hypercontractivity of $\heatl_t$,
see e.g.\ \cite{Aneetal00}. When $\heatl_t$ is ultracontractive,  i.e.\
there exists $p>1$ such that
\begin{equation}\label{eq:stronger}
\|{\sf h}_t f\|_p \leq C(t)\|f\|_1\qquad\forevery f\in
L^2(X,\mm),\,\,t>0,
\end{equation}
then one can also obtain global Lipschitz regularity
for the transition probabilities \cite[Proposition~6.4]{Ambrosio-Gigli-Savare11b}, see also \cite[Proposition~4.4]{GigliKuwadaOhta10}.
The stronger regularizing property \eqref{eq:stronger} is known to
be true, for instance, if doubling and Poincar\'e hold in
$(X,\sfd,\mm)$, see \cite[Corollary~4.2]{Sturm96}. 

We conclude this section with an example of application
of the Bakry-\'Emery estimate \eqref{thm:bakryemery}, which can be proven following
the $\Gamma$-calculus tools of Bakry \cite{Bakry06}, see
Theorem~6.5 in {\rm \cite{Ambrosio-Gigli-Savare11b}} for a detailed proof.

\begin{theorem}[Lipschitz regularization]\label{thm:lipreg}
If $f\in L^2(X,\mm)$ then ${\sf h}_t f\in \calW$ for
    every $t>0$ and
    \begin{equation}
      \label{eq:19}
      2\, \mathrm I_{2K}(t)\weakgrad {{\sf h}_t f }^2\le {\sf h}_t
      f^2\quad\mm\text{-a.e.\ in $X$};
    \end{equation}
    in particular,
  if $f\in L^\infty(X,\mm)$ then $\tilde{\sf h}_t f\in \Lip(X)$ for every
  $t>0$ with
  \begin{equation}
    \label{eq:28}
    \sqrt{2\,\mathrm I_{2K}(t)}\,{\rm Lip}(\tilde{\sf h}_t f)\le
    \|f\|_\infty\quad\forevery t>0.
  \end{equation}
\end{theorem}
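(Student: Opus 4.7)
The strategy, following Bakry's $\Gamma$-calculus, is to interpolate along the heat flow via the function $s\mapsto\phi(s):={\sf h}_s\bigl(({\sf h}_{t-s}f)^2\bigr)$ for $s\in[0,t]$, whose endpoints are $\phi(0)=({\sf h}_t f)^2$ and $\phi(t)={\sf h}_t(f^2)$. A formal computation combining $\partial_s{\sf h}_s=\Delta{\sf h}_s$, the product rule, and the chain identity $\Delta(u^2)=2u\Delta u+2\weakgrad{u}^2$ yields
$$\frac{d}{ds}\phi(s)=2\,{\sf h}_s\bigl(\weakgrad{{\sf h}_{t-s}f}^2\bigr).$$

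The plan is to combine this with the Bakry--\'Emery estimate \eqref{eq:bakryemery} applied on the interval $[t-s,t]$, namely $\weakgrad{{\sf h}_t f}^2=\weakgrad{{\sf h}_s({\sf h}_{t-s}f)}^2\le e^{-2Ks}{\sf h}_s(\weakgrad{{\sf h}_{t-s}f}^2)$, which rearranges to the pointwise lower bound ${\sf h}_s(\weakgrad{{\sf h}_{t-s}f}^2)\ge e^{2Ks}\weakgrad{{\sf h}_t f}^2$. Integrating the derivative identity on $[0,t]$ against this inequality yields
$${\sf h}_t(f^2)-({\sf h}_t f)^2=\phi(t)-\phi(0)\ge 2\,\mathrm I_{2K}(t)\,\weakgrad{{\sf h}_t f}^2\qquad\mm\text{-a.e.,}$$
which is stronger than \eqref{eq:19} (one drops the nonnegative term $({\sf h}_t f)^2$). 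For the second assertion, the $L^\infty$-contraction stated in Theorem~\ref{thm:facili}(ii)--(iii) gives ${\sf h}_t(f^2)\le\|f\|_\infty^2$ $\mm$-a.e., so \eqref{eq:19} forces $\weakgrad{{\sf h}_t f}\in L^\infty(X,\mm)$ with norm at most $\|f\|_\infty/\sqrt{2\,\mathrm I_{2K}(t)}$; the Sobolev-to-Lipschitz conclusion of Theorem~\ref{thm:bakryemery} then produces a Lipschitz representative satisfying \eqref{eq:28}, which must agree with the canonical continuous version $\tilde{\sf h}_t f$ provided by Theorem~\ref{thm:facili}(iii).

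The main obstacle will be justifying the formal differentiation of $\phi(s)$, since in the metric framework only the integrated identity $\int g\Delta f\,\d\mm=-\mathcal E(f,g)$ (see \eqref{eq:intbypartsoo}), the chain rule \eqref{eq:chainriemannian}, and the Leibniz rule for $\Gbil{\cdot}{\cdot}$ with respect to the second argument are available, rather than any pointwise calculus on $\Delta$. The clean route is to test against an arbitrary nonnegative $\psi\in L^2(X,\mm)\cap L^\infty(X,\mm)$, pass by self-adjointness of ${\sf h}_s$ to $\int\psi\,\phi(s)\,\d\mm=\int({\sf h}_s\psi)\,({\sf h}_{t-s}f)^2\,\d\mm$, and then differentiate this scalar curve using that both $r\mapsto{\sf h}_r\psi$ and $r\mapsto{\sf h}_r f$ are absolutely continuous in $L^2$ with derivatives $\Delta{\sf h}_r\psi$ and $\Delta{\sf h}_r f$ respectively. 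The four resulting terms, via \eqref{eq:intbypartsoo}, chain rule and Leibniz applied to $\Gbil{\cdot}{\cdot}$, collapse to $2\int({\sf h}_s\psi)\,\weakgrad{{\sf h}_{t-s}f}^2\,\d\mm=2\int\psi\,{\sf h}_s(\weakgrad{{\sf h}_{t-s}f}^2)\,\d\mm$; arbitrariness of $\psi$, followed by integration on $[0,t]$ and the Bakry--\'Emery lower bound above, yields \eqref{eq:19}.
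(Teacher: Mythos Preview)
Your proposal is correct and is precisely the $\Gamma$-calculus interpolation argument of Bakry that the paper points to (the paper itself does not give a proof, but refers to \cite[Theorem~6.5]{Ambrosio-Gigli-Savare11b}, whose proof follows exactly this scheme). One small caveat: the weak differentiation you describe, in particular the use of the Leibniz rule to write $\mathcal E(u,vu)=\int v\weakgrad{u}^2\,\d\mm+\int u\,\Gbil{u}{v}\,\d\mm$, requires $u={\sf h}_{t-s}f$ and $v={\sf h}_s\psi$ to lie in $\calW\cap L^\infty$, so one should first argue for $f\in L^2\cap L^\infty$ and then pass to general $f\in L^2$ by truncation and lower semicontinuity of $\C$; this is routine but worth stating explicitly.
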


\subsection{Connections with Dirichlet forms and Markov processes}

Since $\C$ is quadratic, lower semicontinuous in $L^2(X,\mm)$ and since
$\weakgrad{f}$ has strong locality properties, it turns out that the bilinear form
$\mathcal E$ associated to $\C$, whose domain is from now on restricted from $L^1(X,\mm)\cap\calW$ to
$L^2(X,\mm)\cap\calW$, is a local Dirichlet form. In the theory of
Dirichlet forms  a canonical object is the induced distance, namely 
\begin{equation}\label{eq:fuku2}
\sfd_{{\mathcal E}}(x,y):=\sup\left\{|\tilde g(x)-\tilde g(y)|:\
g\in D({\mathcal E}),\,\,[g]\leq\mm\right\}\qquad\forall (x,y)\in
X\times X,
\end{equation}
where the function $\tilde{g}$ is the continuous representative in the
Lebesgue class of $g$, see Theorem~\ref{thm:bakryemery}). Another canonical object
is the local energy measure, namely the measure $[u]$ defined by
$$
[u](\varphi):=\mathcal E(u,u\varphi)-\frac{1}{2}\mathcal E(u^2,\varphi)
\qquad\varphi\in L^2(X,\mm)\cap\calW.
$$
A consequence of Lemma~\ref{lem:vabbeserve} is that $[u]=\weakgrad{u}^2\mm$ for all $u\in L^2(X,\mm)\cap\calW$.
Also the distances can be identified:

\begin{theorem}[Identification of $\sfd_{{\mathcal E}}$ and $\sfd$]\label{thm:idistances}
(Theorem~6.10 of {\rm \cite{Ambrosio-Gigli-Savare11b}}) The function $\sfd_{{\mathcal E}}$ in \eqref{eq:fuku2} coincides
with $\sfd$ on $X \times X$.
\end{theorem}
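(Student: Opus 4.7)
The plan is to establish the two inequalities $\sfd_{\mathcal E}\le \sfd$ and $\sfd_{\mathcal E}\ge \sfd$ separately. By Lemma~\ref{lem:vabbeserve} the local energy measure of $g\in L^2(X,\mm)\cap\calW$ equals $\weakgrad{g}^2\mm$, so the condition $[g]\le\mm$ appearing in \eqref{eq:fuku2} is equivalent to $\weakgrad{g}\le 1$ $\mm$-a.e. This observation reduces the theorem to a statement about bounded weak gradients.

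For the inequality $\sfd_{\mathcal E}(x,y)\le\sfd(x,y)$: take any $g\in L^2(X,\mm)\cap\calW$ with $\weakgrad{g}\le 1$ $\mm$-a.e. Then $\|\weakgrad{g}\|_\infty\le 1$, and the Lipschitz regularization part of Theorem~\ref{thm:bakryemery} (the last sentence, asserting ${\rm Lip}(\tilde g)\le\|\weakgrad{g}\|_\infty$) immediately yields a Lipschitz representative $\tilde g$ of $g$ with Lipschitz constant at most $1$, so that $|\tilde g(x)-\tilde g(y)|\le\sfd(x,y)$ for every pair $(x,y)\in X\times X$. Taking the supremum over admissible $g$ delivers the desired inequality.

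For the reverse inequality $\sfd_{\mathcal E}(x,y)\ge\sfd(x,y)$: fix $x,y\in X$, choose $R\ge\sfd(x,y)$, and consider the explicit test function $g_R(z):=(R-\sfd(z,y))^+$. This function is $1$-Lipschitz (as the composition of the $1$-Lipschitz map $z\mapsto\sfd(z,y)$ with a $1$-Lipschitz piecewise linear function), bounded, and supported in the closed ball $\overline{B}_R(y)$. Since $\mm$ is finite on bounded sets, $g_R\in L^2(X,\mm)$. Being Lipschitz, $g_R$ lies in $\calW$ with $\weakgrad{g_R}\le|Dg_R|\le 1$ $\mm$-a.e., so $g_R$ belongs to the admissible class in \eqref{eq:fuku2}; moreover $g_R$ is already continuous, hence its own continuous representative. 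Evaluating $g_R(y)-g_R(x)=R-(R-\sfd(x,y))=\sfd(x,y)$ shows that $\sfd_{\mathcal E}(x,y)\ge\sfd(x,y)$.

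The two steps together give the equality. The only nontrivial ingredient is the Bakry--\'Emery-type Lipschitz regularization used in the first step, which is exactly the content imported from Theorem~\ref{thm:bakryemery}; the second step is elementary once one observes that bounded-support truncations of the distance function are legitimate competitors in the variational definition \eqref{eq:fuku2}, which is permissible precisely because $\mm$ is only assumed $\sigma$-finite and finite on bounded sets. No real obstacle arises in the $\sigma$-finite setting, since the test function $g_R$ has compact support and automatically lies in every relevant space.
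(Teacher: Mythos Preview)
Your proof is correct and follows the standard two-inequality strategy. Note, however, that the paper does not actually give its own proof of this statement: Section~\ref{sec:last} explicitly collects results ``without proof'' from \cite{Ambrosio-Gigli-Savare11b}, and Theorem~\ref{thm:idistances} merely cites Theorem~6.10 there. So there is no in-paper argument to compare against.

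That said, your argument is precisely the natural one and matches what the cited reference does: the inequality $\sfd_{\mathcal E}\le\sfd$ is the nontrivial ``Sobolev-to-Lipschitz'' direction, and you correctly invoke the last clause of Theorem~\ref{thm:bakryemery} (which is exactly the tool the paper has set up for this purpose, as the parenthetical remark after \eqref{eq:fuku2} indicates). The reverse inequality via truncated distance functions $g_R=(R-\sfd(\cdot,y))^+$ is the standard choice; your observation that bounded support ensures $g_R\in L^2(X,\mm)$ in the $\sigma$-finite setting is the only point where care is needed, and you handle it correctly. The identification $[g]=\weakgrad{g}^2\mm$ that underlies your first reduction is stated in the paper immediately before Theorem~\ref{thm:idistances}.
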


Finally, using a tightness property of $\mathcal E$, the theory of Dirichlet forms can be applied
to obtain the representation of transition probabilities in terms of a continuous Markov process: 

\begin{theorem}[Brownian motion]\label{thm:brownian}
(Theorem~6.8 of {\rm \cite{Ambrosio-Gigli-Savare11b}}) Let $(X,\sfd,\mm)$ be a $RCD(K,\infty)$ space. There exists a unique
(in law) Markov process $\{{\mathbf X}_t\}_{\{t\geq 0\}}$ in
$(X,\sfd)$ with continuous sample paths in $[0,\infty)$ and
transition probabilities $\heatw_t(\delta_x)$, i.e.
\begin{equation}\label{eq:transitionmp}
{\mathbf P}\bigl({\mathbf X}_{s+t}\in A\bigl|{\mathbf
X}_s=x\bigr)=\ke xt(A) \qquad\forall s,\,t\geq 0,\,\,\text{$A$
Borel}
\end{equation}
for $\mm$-a.e. $x\in X$.
\end{theorem}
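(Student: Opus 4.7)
The plan is to apply the theory of symmetric Dirichlet forms. First I would verify that the bilinear form $\mathcal E$ associated to $\C$, with domain $D(\mathcal E) = L^2(X,\mm)\cap\calW$, is a symmetric strongly local Dirichlet form: symmetry and bilinearity are built into \eqref{eq:numeriamoanchequesta} once $\C$ is known to be quadratic; closedness comes from the $L^2(X,\mm)$-lower semicontinuity of $\C$; the Markov property follows from the chain rule \eqref{eq:chainrule} applied to normal contractions such as $\phi(t)=(0\vee t)\wedge 1$; and strong locality is a consequence of the pointwise locality of weak upper gradients \eqref{eq:locality} combined with the bilinearity of $\Gbil{\cdot}{\cdot}$ provided by Lemma~\ref{lem:vabbeserve}.

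Next I would establish the (quasi-)regularity of $\mathcal E$ needed to invoke the Fukushima/Ma-R\"ockner construction. A natural core is the algebra of bounded Lipschitz functions with bounded support: by Theorem~\ref{thm:equivalence}(b) it is dense in $D(\mathcal E)$ in the form norm, it separates points of $X$, and it is stable under composition with smooth functions. Since $(X,\sfd)$ is Polish and, by the growth bound \eqref{eq:growthcond}, $\mm$ is tight, this data produce, modulo some $\mathcal E$-exceptional set $\mathcal N$, a Hunt process $(\mathbf X_t)$ whose transition semigroup coincides $\mm$-a.e.\ with ${\sf h}_t$.

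The third step is to remove exceptional sets and pin down the transition probabilities for every starting point $x$. This is where the Feller-type regularization of Theorem~\ref{thm:facili} is decisive: the formula \eqref{eq:l1} gives a pointwise-defined continuous version $\tilde{\sf h}_t f$ for every $f\in L^\infty(X,\mm)$ and every $t>0$, the kernel $\heatw_t(\delta_x)=\ked{x}{t}\mm$ depends continuously on $x\in X$, and the Chapman-Kolmogorov formula \eqref{eq:chapman} holds pointwise. These together with the uniqueness of the symmetric Markov extension identify the transition probabilities of $\mathbf X_t$ with $\heatw_t(\delta_x)$ for every $x\in X$, so that $\mathcal N$ can be eliminated.

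The hardest step, and the main obstacle, is upgrading the Hunt (c\`adl\`ag) structure to almost sure continuity of sample paths. My plan is to combine the intrinsic-distance identification $\sfd_{\mathcal E}=\sfd$ of Theorem~\ref{thm:idistances} with the Lipschitz contractivity in Theorem~\ref{thm:facili}(iv) and the Bakry-\'Emery estimate \eqref{eq:bakryemery}. For any bounded $1$-Lipschitz function $f$, the energy measure of $f$ is dominated by $\mm$, so the additive functional in the Fukushima decomposition of $f(\mathbf X_t)$ is a martingale with quadratic variation bounded by $t$; strong locality then rules out jumps of $f(\mathbf X_t)$, and since $\sfd$-Lipschitz functions separate points of $(X,\sfd)$ this forces path continuity of $\mathbf X_t$ itself. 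Uniqueness in law follows from the pointwise specification \eqref{eq:transitionmp} of the one-dimensional distributions together with the Markov property.
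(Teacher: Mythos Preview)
The paper does not actually prove this theorem: Section~\ref{sec:last} explicitly lists it among the results ``stated without proof'' and refers to Theorem~6.8 of \cite{Ambrosio-Gigli-Savare11b} for the argument, giving only the one-line hint that ``using a tightness property of $\mathcal E$, the theory of Dirichlet forms can be applied to obtain the representation of transition probabilities in terms of a continuous Markov process.'' So there is no in-paper proof to compare your proposal against.

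Your outline is nonetheless aligned with that hint and with the standard route: verify that $\mathcal E$ is a strongly local symmetric Dirichlet form, establish quasi-regularity (the ``tightness'' the paper alludes to is precisely what one needs here, since $(X,\sfd)$ need not be locally compact and ordinary regularity may fail), and invoke the Ma--R\"ockner/Fukushima construction. Two remarks on your sketch. First, your step~4 is more intricate than necessary: once strong locality is in hand, the classical equivalence between strong locality of the form and the diffusion property of the associated Hunt process gives continuity of sample paths directly, without dissecting the Fukushima decomposition of individual Lipschitz observables. Second, your step~3 overshoots the target: the theorem only asserts \eqref{eq:transitionmp} for $\mm$-a.e.\ $x$, which is exactly what the Dirichlet-form construction yields for quasi-every (hence $\mm$-a.e.) starting point, so the Feller-type argument to eliminate the exceptional set is not required for the statement as written.
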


\subsection{Tensorization}

Recall that a metric space $(X,\sfd)$ is said to be non branching if the map $(\e_0,\e_t):\geo(X)\to X^2$
is injective for all $t\in (0,1)$, i.e., geodesics do not split.

\begin{theorem}[Tensorization]\label{thm:tensor}
(Theorem~6.13 of {\rm \cite{Ambrosio-Gigli-Savare11b}}) Let $(X,\sfd_X,\mm_X)$, $(Y,\sfd_Y,\mm_Y)$ be metric measure spaces and define the
product space $(Z,\sfd,\mm)$ as $Z:=X\times Y$,
$\mm:=\mm_X\times\mm_Y$ and
\[
\sfd\big((x,y),(x',y')\big):=\sqrt{\sfd_X^2(x,x')+\sfd_Y^2(y,y')}.
\]
Assume that both $(X,\sfd_X,\mm_X)$ and $(Y,\sfd_Y,\mm_Y)$ are $RCD(K,\infty)$ and non branching. 
Then $(Z,\sfd,\mm)$ is $RCD(K,\infty)$ and
non branching as well.
\end{theorem}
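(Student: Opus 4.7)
My plan is to exploit the equivalence established in Theorem~\ref{thm:main1} and prove the tensorization via characterization (ii), namely $CD(K,\infty)$ plus the quadratic nature of the Cheeger energy. Since we have now extended the equivalence to the $\sigma$-finite setting, we are free to work with whichever formulation is most convenient on each side of the argument.

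\textbf{Step 1: Non-branching of the product and basic geometry.} First I would observe that the product distance $\sfd$ defined in the statement makes $(Z,\sfd)$ complete and separable, and that constant speed geodesics $t\mapsto (\gamma_t,\eta_t)\in Z$ are exactly the pairs where $\gamma\in\geo(X)$ and $\eta\in\geo(Y)$ with $\sfd_X(\gamma_0,\gamma_1)$, $\sfd_Y(\eta_0,\eta_1)$ arbitrary. In particular the projections of a $Z$-geodesic are geodesics in each factor (possibly constant). Non-branching in each factor then immediately transfers: if two $Z$-geodesics agree on $[0,t]$, so do their projections, which by non-branching in $X$ and $Y$ must coincide on $[0,1]$.

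\textbf{Step 2: Tensorization of $CD(K,\infty)$.} This is the classical part, for which I would follow the non-branching argument of Sturm \cite{Sturm06I} (it works verbatim in the $\sigma$-finite case since the entropy bound \eqref{eq:growthcond} tensorizes and Jensen gives the right sign). The key point is that under non-branching any $W_2$-optimal plan on $Z$ disintegrates compatibly with optimal plans in each factor, so that one can build a $K$-convex interpolation on $Z$ by combining the $K$-convex interpolations on $X$ and $Y$; the product structure of the reference measure turns the relative entropy into a sum $\ent{\mu_t}=\REG{(\pi^X)_\sharp\mu_t|_{\text{fiber}}}+\REG{(\pi^Y)_\sharp\mu_t|_{\text{fiber}}}$ after disintegration, and $K$-convexity is preserved by summation.

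\textbf{Step 3: Tensorization of the quadratic Cheeger energy.} This is the main obstacle. The plan is to show that $\C_Z$ splits as a sum of ``partial'' Cheeger energies: for $f\in L^2(Z,\mm)$,
\begin{equation}\label{eq:ttt}
2\C_Z(f)=\int_Y\!\int_X |D_X f(\cdot,y)|_{w,\mm_X}^2\,\d\mm_X\,\d\mm_Y
+\int_X\!\int_Y |D_Y f(x,\cdot)|_{w,\mm_Y}^2\,\d\mm_Y\,\d\mm_X,
\end{equation}
with the weak gradients on the right well defined for a.e. slice. The inequality $\le$ comes from testing with Lipschitz tensor products and using the factorwise identification of the minimal weak upper gradient with the one produced by relaxation of the slope (Theorem~\ref{thm:equivalence}(b) combined with the slicing inequality $|D f|^2\le |D_X f|^2+|D_Y f|^2$ for Lipschitz $f$). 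The reverse inequality $\ge$ is the delicate part and uses test plans: given a test plan $\ppi$ on $Z$, one disintegrates it into ``horizontal'' and ``vertical'' test plans on the factors via the evaluation maps, and uses the Bochner-type orthogonality arising from the Pythagoras identity $|\dot\alpha_t|_Z^2=|\dot\gamma_t|_X^2+|\dot\eta_t|_Y^2$ for absolutely continuous curves in $Z$. Once \eqref{eq:ttt} is established, the quadratic nature of $\C_X$ and $\C_Y$ transfers to $\C_Z$ because each inner integral is quadratic in the slice function by Theorem~\ref{thm:weighted} applied in the factors (or directly since $\C_X,\C_Y$ are quadratic), and a parallelogram identity for $\C_Z$ follows by integrating the slicewise identities.

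\textbf{Step 4: Conclusion.} Combining Step 2 and Step 3 gives $CD(K,\infty)$ plus $\C_Z$ quadratic on $L^2(Z,\mm)$, which by Theorem~\ref{thm:main1}(ii) yields $(Z,\sfd,\mm)\in RCD(K,\infty)$. Together with Step 1, this proves the theorem. The hard part is Step 3, specifically the inequality $\ge$ in \eqref{eq:ttt}: one must control a general test plan on $Z$ by slicewise test plans, which requires a careful disintegration argument and use of the density bound $(\e_t)_\sharp\ppi\le C\mm$ together with Fubini to produce test plans in each factor for $\mm_X\otimes\mm_Y$-a.e. slice.
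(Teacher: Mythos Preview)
The paper does not give its own proof of this theorem: Section~\ref{sec:last} opens by saying that the results there, including Theorem~\ref{thm:tensor}, are ``state[d] without proof'' and refers the reader to Theorem~6.13 of \cite{Ambrosio-Gigli-Savare11b}, whose argument does not depend on finiteness of $\mm$.

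Your outline is essentially the strategy of \cite{Ambrosio-Gigli-Savare11b}: tensorize non-branching trivially, tensorize $CD(K,\infty)$ via Sturm's argument (which needs non-branching), and then verify that $\C_Z$ is quadratic by establishing the splitting \eqref{eq:ttt}. Two small points of caution. In Step~2, the entropy does not split as the sum of entropies of the marginals; the correct decomposition is into the entropy of a marginal plus an integrated conditional entropy, and it is this that Sturm's proof exploits. In Step~3, the inequality $\geq$ in \eqref{eq:ttt} is indeed the substantive part; producing genuine test plans in each factor from a test plan on $Z$ requires more than disintegrating $\ppi$ along the projection maps --- one has to build, for $\mm_Y$-a.e.\ $y$, a test plan on $X$ whose marginals are bounded by $C\mm_X$, and the naive slices of $(\e_t)_\sharp\ppi$ need not satisfy this uniformly in $t$. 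In \cite{Ambrosio-Gigli-Savare11b} this is handled carefully, and your sketch correctly identifies it as the crux.
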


In \cite{AGSBaEm} the first two authors in collaboration with Savar\'e proved that the tensorization property of $RCD(K,\infty)$ 
persists even when the non branching assumption on the base spaces is removed.

\def\cprime{$'$}

\end{document}